\newcommand{\smallfrac}[2]{{\textstyle\frac{#1}{#2}}} 
\newcommand{\jump}[1]{\llbracket #1\rrbracket}
\newcommand{\ave}[1]{\{\!\!\{#1\}\!\!\}}
\newcommand{\vertiii}[1]{{\left\vert\kern-0.25ex\left\vert\kern-0.25ex\left\vert #1 
    \right\vert\kern-0.25ex\right\vert\kern-0.25ex\right\vert}}
\newtheorem{proposition}{Proposition}[section]
\newtheorem{corollary}[proposition]{Corollary}
\newtheorem{lemma}[proposition]{Lemma}
\newtheorem{theorem}[proposition]{Theorem}
\numberwithin{equation}{section}
\title{A fully discrete BEM-FEM scheme \\ for transient acoustic waves}
\date{\today}
\author{Matthew E. Hassell
 \& Francisco--Javier Sayas\footnote{MEH and FJS partially funded by NSF grant DMS 1216356.}  \\
Department of Mathematical Sciences, University of Delaware, USA\\
{\tt \{mhassell,fjsayas\}@udel.edu }}
\newcommand{\where}[1]{\mbox{\rm (in $#1$)}}
\newcommand{\opp}[1]{\mathcal #1 *}
\begin{document}

\maketitle

\begin{abstract}
We study a symmetric BEM-FEM coupling scheme for the scattering of transient acoustic waves by bounded inhomogeneous anisotropic obstacles in a homogeneous field.  An incident wave in free space interacts with the obstacles and produces a combination of transmission and scattering.  The transmitted part of the wave is discretized in space by finite elements while the scattered wave is reduced to two fields defined on the boundary of the obstacles and is discretized in space with boundary elements.  We choose a coupling formulation that leads to a symmetric system of integro-differential equations.  The retarded boundary integral equations are discretized in time by Convolution Quadrature, and the interior field is discretized in time with the trapezoidal rule.  We show that the scattering problem generates a $C_0$ group of isometries in a Hilbert space, and use associated estimates to derive stability and convergence results.  We provide numerical experiments and simulations to validate our results and demonstrate the flexibility of the method. \\
{\bf AMS Subject classification.} 65R20, 65M38\\
{\bf Keywords.} BEM-FEM Coupling, Convolution Quadrature, Transient Wave Equation.
\end{abstract}

\section{Introduction}

In this paper we study the transmission and scattering of acoustic waves by inclusions in free space.   We focus on the case of a finite number of disjoint bounded, inhomogeneous and anisotropic inclusions.   An incident acoustic wave interacts with the inclusions, producing transmitted and scattered fields.   The wave transmitted through the inclusions is discretized in space with finite elements, while the scattered wave is reduced to two unknowns defined only on the boundary of the inclusions and is discretized in space with boundary elements.   For time discretization, we make use of trapezoidal rule based Convolution Quadrature (CQ) \cite{Lubich:1988a} and trapezoidal rule time stepping.  The scattered field can then be reconstructed from the boundary fields in a postprocessing step using Kirchhoff's formula.   By imposing two continuity conditions across the boundary of the inclusions, we arrive at a symmetric BEM-FEM coupling scheme.

There has been extensive work on the study of coupling of boundary and finite elements for steady-state and time-harmonic problems, but the literature on coupling schemes for transient problems is relatively sparse.  There are generally two types of coupling formulations, using one or two integral equations. The first ones (first analyzed by Johnson and N{\'e}d{\'e}lec \cite{JoNe:1980} for diffusion problems) lead to non-self adjoint formulations, while symmetric couplings (due to Costabel \cite{Costabel:1987} and Han \cite{Han:1990}) arrive at a symmetric system. Two-equation formulations are based on variational principles, and can be shown to always be stable, but at the cost of requiring all four of the operators of the Calder{\'o}n projector associated to the underlying PDE.  Single equation coupling methods are simpler, but do not have an underlying energy principle, and may therefore become unstable when there are large jumps in the material parameters.  The traditional two-equation coupling involves applying integral operators to the traces of finite element functions. There is an alternative formulation, using two fields on the boundary, that can keep the FEM and BEM modules better separated. For this work, we will study a two-equation, three-field coupling method.   Because we are using a two-equation formulation, we require all four retarded boundary integral operators associated to the wave equation.  

We next comment on some of the not very extensive existing literature on coupling of BEM and FEM for transient  waves.
The seminal paper \cite{BaBoPu:2001} provides several variational formulations of BEM-FEM coupling for time-dependent electromagnetic waves, with proofs of stability and convergence for their formulations, using a full-Galerkin treatment of the integral equations.  The papers \cite{AbJoRoTe:2011, BaLuSa:2015} deal with four-field formulations (two fields in the interior domain and two on the boundary) and aim at coupling an explicit interior time-stepping method with the retarded boundary integral equations on the boundary, differing in the use of Galerkin-in-time or CQ for the equations on the boundary. The papers \cite{FaMoSc:2012, FaMo:2014, FaMo:2015} contain successful computational studies of one-equation couplings, although a theoretical understanding of their stability is still missing. A preliminary semidiscrete stability analysis in the Laplace domain of the coupling method we will study here appears in \cite{LaSa:2009a}. 
In a similar vein, there is also recent work \cite{HsSaSa:2015} on the coupling of BEM and FEM for acoustic waves interacting with elastic media.

Traditional analysis of CQ discretizations of retarded integral equations has relied heavily on the use of the Laplace transform.   Precise bounds in terms of the Laplace parameter can be translated into estimates for the time-dependent problem.   The time domain estimates, however, are generally not sharp, because some regularity is lost by translating the problem to and from the Laplace domain.   In \cite{Lubich:1994} it is observed that the Laplace domain analysis can be avoided entirely, so that stability and convergence can be studied by directly considering the properties of the fully discrete (in space and time) solution to the underlying PDE.   This allows us to apply the theory of $C_0$ groups of isometries in Hilbert spaces to find sharper estimates than those provided by Laplace domain analysis.   Our analysis follows the first-order-in-space-and-time methodology proposed in \cite{SaHaQiSa:2015}.   By transforming the second-order-in-space-and-time wave equation into a first order system, we are able to circumvent a number of technical challenges that arise in the second-order-in-space-and-time analysis of \cite{Sayas:2014, Sayas:2013d}.

This paper is organized as follows. Section \ref{sec:2} prepares the basic notation and problem setting for the continuous and semidiscrete-in-space problems.  Section \ref{sec:3} introduces the first-order-in-space-and-time formulation and contains the analysis for the semidiscrete-in-space problem.   Through the application of a general result from the theory of semigroups of linear operators, we are able to establish stability for long times (with precise understanding of the growth of the energy in the system with respect to time) and optimal order of convergence for a Galerkin semidiscretization.  Section \ref{sec:4.1} carries out the analysis for the fully discrete problem when trapezoidal rule time stepping and trapezoidal rule CQ are used for time discretization.   We establish optimal order of convergence for the fully discrete scheme for data with sufficient regularity.   In Section \ref{sec:4.2} we include a detailed explanation of the algorithmic aspects of the coupling scheme.  Finally, section \ref{sec:5} provides numerical experiments and simulations.

\section{Continuous and semidiscrete problems}\label{sec:2}

\paragraph{Norms and inner products.} Given an open set $\mathcal O\subset \mathbb R^d$, we will denote the $L^2(\mathcal O)$ norm by $\|\cdot\|_{\mathcal O}$ and the $H^1(\mathcal O)$ norm by $\| \cdot\|_{1,\mathcal O}$. The inner products in $L^2(\mathcal O)$ and $L^2(\mathcal O)^d$ will be denoted $(\cdot,\cdot)_{\mathcal O}$. The $H^{\pm 1/2}(\Gamma)$ norms for a closed polygonal surface $\Gamma$ will be denoted $\|\cdot\|_{\pm 1/2,\Gamma}$. The duality product $H^{-1/2}(\Gamma)\times H^{1/2}(\Gamma)$ (with the spaces always in this order) will be denoted $\langle\cdot,\cdot\rangle_\Gamma$. 

\paragraph{Geometric setting and coefficients.}
Let $\Omega_j\subset \mathbb R^d$ ($j=1,\ldots,N$) be connected open sets lying on one side of their Lipschitz connected boundaries $\partial\Omega_j$ and such that their closures do not intersect. Let then $\Omega_-:=\cup_{j=1}^N \Omega_j$, $\Gamma:=\partial\Omega_-$, and $\Omega_+:=\mathbb R^d\setminus\overline{\Omega_-}$. In $\Omega_-$ we have two coefficients:
\[
\kappa:\Omega_-\to\mathbb R^{d\times d}_{\mathrm{sym}}, \qquad c:\Omega_-\to \mathbb R,
\]
where $\mathbb R^{d\times d}_{\mathrm{sym}}$ is the space of symmetric $d\times d$ real matrices. We assume that $c\in L^\infty(\Omega_-)$ and $c\ge c_0>0$ almost everywhere, so that $c^{-1}\in L^\infty(\Omega_-)$. We also assume that $\kappa \in L^\infty(\Omega_-)^{d\times d}$ is uniformly positive definite, i.e., there exists $\kappa_0>0$ such that
\[
\mathbf d\cdot (\kappa\mathbf d)\ge \kappa_0 |\mathbf d|^2, 
\qquad \forall \mathbf d\in \mathbb R^d,
\qquad \mbox{almost everywhere in $\Omega_-$}.
\]

\paragraph{Functional framework in the space variables.}
Before we state the transmission problem in a rigorous form we need to introduce some spaces and operators related to the space variables. The solution will take values in the spaces
\begin{subequations}
\begin{eqnarray}
H^1_\kappa(\Omega_-)
	&:=& \{ u\in H^1(\Omega_-)\,:\, \mathrm{div}\,(\kappa\nabla u)\in L^2(\Omega_-)\},\\
H^1_\Delta(\Omega_+)
	&:=& \{ u\in H^1(\Omega_+)\,:\, \Delta u\in L^2(\Omega_+)\}.
\end{eqnarray}
\end{subequations}
We will also need two trace operators $\gamma^\pm : H^1(\Omega_\pm)\to H^{1/2}(\Gamma)$ and the associated interior-exterior normal derivative operators $\partial_\nu^\pm:H^1_\Delta(\Omega_\pm)\to H^{-1/2}(\Gamma)$, defined in the usual weak form through Green's identities. We will also need the jump and average operators
\[
\jump{\gamma\cdot}:=\gamma^--\gamma^+,
	\quad
\ave{\gamma\cdot}:=\tfrac12(\gamma^-+\gamma^+),	
	\quad
\jump{\partial_\nu\cdot}:=\partial_\nu^--\partial_\nu^+,
	\quad
\ave{\partial_\nu\cdot}:=\tfrac12(\partial_\nu^-+\partial_\nu^+).
\]
For functions defined only in the interior domain $\Omega_-$ we will not use a superscript for the trace. We will also use the interior conormal derivative operator
$\partial_{\kappa,\nu}:H^1_\kappa(\Omega_-)\to H^{-1/2}(\Gamma)$.

\paragraph{Functional framework in the time variable.}
For the time variable we will use the language of vector-valued distributions. The test space $\mathcal D(\mathbb R)$ is the set of infinitely differentiable functions with compact support. This set is endowed with its usual topology \cite{Schwartz:1966}.
If $X$ is a Banach space, we say that $f\in \mathrm{TD}(X)$ when $f:\mathcal D(\mathbb R)\to X$ is a sequentially continuous linear function such that there exists a continuous causal functional
\[
g:\mathbb R\to X,
\qquad 
g(t)=0 \quad\forall t<0,
\qquad 
\| g(t)\|_X \le C t^m \quad m\ge 0, \quad t\ge 1
\]
and a non-negative integer $k$ satisfying
\begin{equation}
\langle f,\varphi\rangle=(-1)^k \int_{-\infty}^\infty g^{(k)}(\tau) \varphi(\tau)\mathrm d\tau
\qquad \forall \varphi \in \mathcal D(\mathbb R).
\end{equation}
This is equivalent to saying that $f$ is the $k$-th distributional derivative of a causal continuous polynomially bounded function. It is known \cite[Chapter 3]{Sayas:2014} that $f\in \mathrm{TD}(X)$ admits a distributional Laplace transform $\mathrm F$ defined in $\mathbb C_+:=\{ s\in \mathbb C\,:\,\mathrm{Re}\,s>0\}$, allowing for bounds of the form
\[
\| \mathrm F(s)\|_X \le C(\mathrm{Re}\,s) |s|^\mu \qquad \forall s\in \mathbb C_+,
\]
where $\mu\in \mathbb R$ and $C:(0,\infty)\to (0,\infty)$ is non-increasing and such that $C(\sigma)\le C \sigma^{-\ell}$ for some $\ell\ge 0$ as $\sigma\to 0$. Note that if $f\in \mathrm{TD}(X)$ and $A:X\to Y$ is linear and bounded ($A\in \mathcal B(X,Y)$), then $Af\in\mathrm{TD}(Y)$. Note also that distributional differentiation in the time variable is well defined in $\mathrm{TD}(X)$.

\paragraph{The transmission problem.} Let us assume that the incident wave is defined in a way such that
\[
\beta_0:=\gamma u^{\mathrm{inc}}\in\mathrm{TD}(H^{1/2}(\Gamma)),
\qquad
\beta_1:=\partial_\nu u^{\mathrm{inc}}\in\mathrm{TD}(H^{-1/2}(\Gamma)).
\]
This is a statement about `smoothness' of the incident wave in the space variables close to the boundary, as well as about causality of the traces of the incident wave. We look for
\begin{subequations}\label{eq:2.3}
\begin{equation}\label{eq:2.3o}
(u,u_+)\in \mathrm{TD}(H^1_k(\Omega_-))\times
\mathrm{TD}(H^1_\Delta(\Omega_+))
\end{equation}
  satisfying 
\begin{alignat}{6}
\label{eq:2.3a}
c^{-2}\ddot u &=\mathrm{div}(\kappa\nabla u) 
	&&\qquad &\where{L^2(\Omega_-)},\\
\ddot u_+ &= \Delta u_+ 
	&& &\where{L^2(\Omega_+)},\\
\label{eq:2.3c}
\gamma u &=\gamma^+ u_+ +\beta_0
	&& &\where{H^{1/2}(\Gamma)},\\
\label{eq:2.3d}
\partial_{\kappa,\nu} u &=\partial_\nu^+ u_++\beta_1
	&& &\where{H^{-1/2}(\Gamma)}.
\end{alignat}
\end{subequations}
Each of the equations in \eqref{eq:2.3} is satisfied as an equality of distributions taking values in the space in parentheses on the right-hand-side of the equation. We note that the vanishing initial conditions for $u$ are implicitly imposed by the condition \eqref{eq:2.3o}. Existence and uniqueness of solution to \eqref{eq:2.3} follows by taking Laplace transforms \cite[Section 6]{LaSa:2009a}.

\paragraph{Retarded potentials and associated integral operators.} The retarded layer potentials for the acoustic wave equation can be introduced using a uniquely solvable transmission problem. Let $\psi\in \mathrm{TD}(H^{1/2}(\Gamma))$ and $\eta\in \mathrm{TD}(H^{-1/2}(\Gamma))$. The problem that looks for $u\in \mathrm{TD}(H^1_\Delta(\mathbb R^d\setminus\Gamma))$ satisfying
\begin{subequations}\label{eq:2.4}
\begin{alignat}{6}
\ddot u=\Delta u
	&&\qquad &\where{L^2(\mathbb R^d\setminus\Gamma)},\\
\jump{\gamma u}=\psi,
	&&&\where{H^{1/2}(\Gamma)},\\
\jump{\partial_\nu u}=\eta,
	&& &\where{H^{-1/2}(\Gamma)},
\end{alignat}
\end{subequations}	
admits a unique solution, since it is a particular instance of \eqref{eq:2.3}. Using Laplace transforms and the theory of layer potentials for the resolvent operator of the Laplacian, it can be shown that there exist
\[
\mathcal D\in \mathrm{TD}(\mathcal B(H^{1/2}(\Gamma),H^1_\Delta(\mathbb R^d\setminus\Gamma))),
\qquad
\mathcal S\in \mathrm{TD}(\mathcal B(H^{-1/2}(\Gamma),H^1_\Delta(\mathbb R^d\setminus\Gamma))),
\]
such that the solution of \eqref{eq:2.4} can be written using the weak Kirchhoff formula (see \cite{LaSa:2009b} for a direct introduction to these operators in the three dimensional case)
\[
u=\mathcal S*\eta-\mathcal D*\psi.
\]
Here and in the sequel, the convolution symbol $*$ refers specifically to the convolution of a causal operator-valued distribution with a causal vector-valued distribution. The four retarded boundary integral operators are given by convolution with the averages of the Cauchy traces of the single and double layer retarded potentials:
\begin{alignat}{6}\label{eq:BIO}
\nonumber
\mathcal V :=\ave{\gamma \mathcal S} =\gamma^\pm \mathcal{S}&\in \mathrm{TD}(\mathcal B(H^{-1/2}(\Gamma),H^{1/2}(\Gamma))),\\
\nonumber
\mathcal K :=\ave{\gamma \mathcal D} &\in \mathrm{TD}(\mathcal B(H^{1/2}(\Gamma),H^{1/2}(\Gamma))),\\
\mathcal K^t :=\ave{\partial_\nu \mathcal S} &\in \mathrm{TD}(\mathcal B(H^{-1/2}(\Gamma),H^{-1/2}(\Gamma))),\\
\nonumber
\mathcal W :=-\ave{\partial_\nu \mathcal D} =-\partial_\nu^\pm \mathcal{D}&\in \mathrm{TD}(\mathcal B(H^{1/2}(\Gamma),H^{-1/2}(\Gamma))).\\
\nonumber
\end{alignat}
A fully detailed introduction to the retarded layer potentials and operators is given in \cite[Chapters 2 and 3]{Sayas:2014}, based on the Laplace domain analysis of Bamberger and HaDuong \cite{BaHa:1986a, BaHa:1986b}.

\paragraph{Boundary-field formulation.} Let $u_+$ be the exterior part of the solution of \eqref{eq:2.3} and let
\[
\phi:=\gamma^+ u_+,
\qquad
\lambda:=\partial_\nu^+ u_+.
\]
Then, by definition of the layer potentials and operators,
\begin{subequations}
\begin{alignat}{6}
u_+=\opp{D}\phi-\opp{S}\lambda
	&&\qquad&\where{H^1_\Delta(\Omega_+)},\\
\label{eq:2.5b}
\gamma^+ u_+=\tfrac12\phi+\opp{K}\phi - \opp{V}\lambda
	&&& \where{H^{1/2}(\Gamma)},\\
\label{eq:2.5c}
0=\opp{W}\phi+\tfrac12\lambda+\opp{K^t}\lambda
	&&&\where{H^{-1/2}(\Gamma)}.
\end{alignat}
\end{subequations}
The coupled boundary-field system consists of: (a) a variational-in-space formulation of \eqref{eq:2.3a} using \eqref{eq:2.3d}, (b) a non-local boundary condition obtained by substitution of \eqref{eq:2.3c} in \eqref{eq:2.5b}, and (c) the identity \eqref{eq:2.5c} to `symmetrize' the coupled system. We look for
\begin{subequations}\label{eq:2.6}
\begin{equation}
(u,\lambda,\phi)\in 
	\mathrm{TD}(H^1(\Omega_-))\times \mathrm{TD}(H^{-1/2}(\Gamma))
										\times\mathrm{TD}(H^{1/2}(\Gamma))
\end{equation}
satisfying
\begin{alignat}{6}
(c^{-2}\ddot u,w)_{\Omega_-}+(\kappa \nabla u,\nabla w)_{\Omega_-}
		&-\langle\lambda,\gamma w\rangle \hspace{3cm} \\
\label{eq:2.6b}
		&=\langle\beta_1,\gamma w\rangle \quad\forall w\in H^1(\Omega_-)
			 \quad &&\where{\mathbb R},\\
\gamma u+\opp{V}\lambda-\tfrac12\phi-\opp{K}\phi&=\beta_0
			\quad &&\where{H^{1/2}(\Gamma)},\\
\tfrac12\lambda+\opp{K^t}\lambda+\opp{W}\phi &=0
			\quad &&\where{H^{-1/2}(\Gamma)}.
\end{alignat}
\end{subequations}
The equivalence of the transmission problem with the boundary-field formulation \eqref{eq:2.6} is given in the next proposition. Its proof follows from taking Laplace transforms and using well-known results on integral representations of the solutions of elliptic equations \cite{McLean:2000}.

\begin{proposition}
Problem \eqref{eq:2.6} has a unique solution for arbitrary $\beta_0\in \mathrm{TD}(H^{1/2}(\Gamma))$ and $\beta_1\in \mathrm{TD}(H^{-1/2}(\Gamma))$. If $(u,\phi,\lambda)$ solves \eqref{eq:2.6} and $u_+=\opp{D}\phi-\opp{S}\lambda$, then $(u,u_+)$ is the unique solution of \eqref{eq:2.3}. Reciprocally, if $(u,u_+)$ is the solution of \eqref{eq:2.3} and $\phi:=\gamma^+ u_+$, $\lambda:=\partial_\nu^+ u_+$, then $(u,\lambda,\phi)$ is the solution of \eqref{eq:2.6}.
\end{proposition}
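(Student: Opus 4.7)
The plan is to prove the two conversions between solutions of \eqref{eq:2.3} and \eqref{eq:2.6} explicitly; existence and uniqueness for \eqref{eq:2.6} then follow from those for \eqref{eq:2.3}, which are cited from \cite{LaSa:2009a}. The forward map is essentially the variational derivation that motivates the coupled formulation, while the reverse map uses the jump/average relations \eqref{eq:BIO} for the retarded layer potentials.

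Forward direction. Given $(u,u_+)$ solving \eqref{eq:2.3}, set $\phi:=\gamma^+ u_+$ and $\lambda:=\partial_\nu^+ u_+$. Testing \eqref{eq:2.3a} against $w\in H^1(\Omega_-)$ and applying Green's first identity
\[
(c^{-2}\ddot u,w)_{\Omega_-}+(\kappa\nabla u,\nabla w)_{\Omega_-}=\langle\partial_{\kappa,\nu}u,\gamma w\rangle_\Gamma,
\]
together with the substitution $\partial_{\kappa,\nu}u=\lambda+\beta_1$ from \eqref{eq:2.3d}, yields the variational identity \eqref{eq:2.6b}. The first boundary equation in \eqref{eq:2.6} comes from \eqref{eq:2.5b} after replacing $\gamma^+u_+$ with $\gamma u-\beta_0$, and the second boundary equation is exactly \eqref{eq:2.5c}. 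Membership of $(u,\lambda,\phi)$ in the product TD space is preserved because the trace and normal-derivative operators act boundedly between the relevant TD spaces.

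Reverse direction. Given $(u,\lambda,\phi)$ solving \eqref{eq:2.6}, set $u_+:=\opp{D}\phi-\opp{S}\lambda$ and restrict to $\Omega_+$; the wave equation $\ddot u_+=\Delta u_+$ is automatic since $\mathcal{D}*$ and $\mathcal{S}*$ map into solutions of the transmission problem \eqref{eq:2.4}. Reading the jumps $\jump{\gamma\mathcal{D}}=-I$ and $\jump{\partial_\nu\mathcal{S}}=I$ from \eqref{eq:2.4}, and combining them with \eqref{eq:BIO}, gives
\[
\gamma^+u_+=\tfrac12\phi+\opp{K}\phi-\opp{V}\lambda,\qquad
\partial_\nu^+u_+=\tfrac12\lambda-\opp{K^t}\lambda-\opp{W}\phi.
\]
The first boundary equation of \eqref{eq:2.6} then produces $\gamma^+u_+=\gamma u-\beta_0$, i.e.\ \eqref{eq:2.3c}, while the second boundary equation reduces the right-hand identity above to $\partial_\nu^+u_+=\lambda$. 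Testing \eqref{eq:2.6b} first against $w\in H^1_0(\Omega_-)$ recovers \eqref{eq:2.3a} and the upgraded regularity $u\in\mathrm{TD}(H^1_\kappa(\Omega_-))$; testing against general $w$ and applying Green's identity then yields $\partial_{\kappa,\nu}u=\lambda+\beta_1=\partial_\nu^+u_++\beta_1$, which is \eqref{eq:2.3d}.

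With these two conversions in hand, existence for \eqref{eq:2.6} follows by applying the forward map to the known solution of \eqref{eq:2.3}, and uniqueness follows because any two solutions of \eqref{eq:2.6} map (via the reverse construction) to the same $(u,u_+)$ by the uniqueness cited from \cite{LaSa:2009a}, hence to the same boundary traces. The principal bookkeeping hurdle is tracking the sign conventions in the jump relations, which is handled uniformly by pulling them out of the defining transmission problem \eqref{eq:2.4} rather than rederiving them in the time domain.
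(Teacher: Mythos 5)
Your overall route is sound and is, in substance, the argument the paper compresses into a single sentence (``take Laplace transforms and use well-known results on integral representations''): you carry out the same algebra with the time-domain convolution operators and the characterization \eqref{eq:2.4} of the potentials, which is legitimate because every identity you invoke ($\jump{\gamma\mathcal D}=-I$, $\jump{\partial_\nu\mathcal S}=I$, the definitions \eqref{eq:BIO}, and the trace relations \eqref{eq:2.5b}--\eqref{eq:2.5c}) is exactly the inverse Laplace transform of the corresponding elliptic identity in \cite{McLean:2000}. The forward and reverse computations, including the sign bookkeeping, check out.

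There is, however, one step in your uniqueness argument that does not close as written. From ``two solutions of \eqref{eq:2.6} map to the same $(u,u_+)$'' you conclude they have ``the same boundary traces,'' but your reverse direction only establishes $\partial_\nu^+u_+=\lambda$ and $\gamma^+u_+=\gamma u-\beta_0$; it does \emph{not} show $\gamma^+u_+=\phi$. Hence knowing $u_+=u_+'$ and $\lambda=\lambda'$ only gives $\opp{D}(\phi-\phi')=0$ in $\Omega_+$, and you still owe an argument that $\phi=\phi'$. The missing ingredient is that the symmetrizing equation forces $\partial_\nu^-\bigl(\opp{D}\phi-\opp{S}\lambda\bigr)=-\bigl(\opp{W}\phi+\opp{K^t}\lambda+\tfrac12\lambda\bigr)=0$, so the \emph{interior} piece of the potential solves a causal interior Neumann problem for the wave equation with vanishing data and therefore vanishes identically in $\Omega_-$; then $\gamma^-(\opp{D}\phi-\opp{S}\lambda)=0$ combined with the jump relation $\jump{\gamma(\opp{D}\phi-\opp{S}\lambda)}=-\phi$ yields $\phi=\gamma^+u_+$ (and, for free, $\lambda=\partial_\nu^+u_+$, recovering your computation). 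With this, the forward and reverse maps are mutually inverse and both the existence and uniqueness claims follow from the cited well-posedness of \eqref{eq:2.3}. This is a standard but genuinely necessary step: it is precisely what makes \eqref{eq:2.6} a \emph{direct} formulation in which $\phi$ and $\lambda$ coincide with the exterior Cauchy data rather than being artificial densities.
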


\paragraph{Semidiscretization in space.} We now introduce three finite dimensional subspaces
\[
U_h \subset H^1(\Omega_-),
	\qquad
X_h \subset H^{-1/2}(\Gamma),
	\qquad
Y_h \subset H^{1/2}(\Gamma).
\]
While we will keep Galerkin notation for the discretization of the variational equation \eqref{eq:2.6b}, we will follow \cite{LaSa:2009a} and shorten Galerkin semidiscrete-in-space equations on the boundary using polar spaces. If $\alpha\in \mathrm{TD}(H^{1/2}(\Gamma))$, we will write
\[
\alpha \in X_h^\circ \quad \where{H^{1/2}(\Gamma)}
\quad
\mbox{to denote}
\quad
\langle \mu^h,\alpha\rangle=0 \quad \forall\mu^h\in X_h \quad \where{\mathbb R}.
\]
Similary, if $\rho\in \mathrm{TD}(H^{-1/2}(\Gamma))$, we will write
\[
\rho \in Y_h^\circ \quad \where{H^{-1/2}(\Gamma)}
\quad
\mbox{to denote}
\quad
\langle \rho,\psi^h\rangle=0 \quad \forall\psi^h\in Y_h \quad \where{\mathbb R}.
\]
These conditions can also be described by taking Laplace transforms and imposing the respective tests with elements of $X_h$ and $Y_h$ to vanish for all values of the Laplace domain parameter $s$. We will also write conditions of the form
\begin{equation}\label{eq:2.7}
\eta\in X_h\quad \where{H^{-1/2}(\Gamma)} 
\qquad\mbox{and}\qquad
\psi\in Y_h\quad \where{H^{1/2}(\Gamma)}.
\end{equation}
For instance, if $\Pi_h^X:H^{-1/2}(\Gamma)\to X_h$ is the orthogonal projection onto $X_h$, the first condition in \eqref{eq:2.7} can be defined as $\Pi_h^X\eta=\eta$ as $H^{-1/2}(\Gamma)$-valued distrubutions.
The semidiscrete version of \eqref{eq:2.6} is the search for
\begin{subequations}\label{eq:2.8}
\begin{equation}
(u^h,\lambda^h,\phi^h)\in 
	\mathrm{TD}(U_h)\times \mathrm{TD}(H^{-1/2}(\Gamma))
										\times\mathrm{TD}(H^{1/2}(\Gamma))
\end{equation}
satisfying
\begin{equation}
\lambda^h\in X_h\quad \where{H^{-1/2}(\Gamma)},
\qquad
\phi^h\in Y_h\quad \where{H^{1/2}(\Gamma)}.
\end{equation}
and
\begin{alignat}{6}
\label{eq:2.8c}
(c^{-2}\ddot u^h,w^h)_{\Omega_-}+(\kappa \nabla u^h,\nabla w^h)_{\Omega_-}
		=\langle\lambda^h+\beta_1,\gamma w^h\rangle \quad\forall w^h\in U_h
			&\quad&&\where{\mathbb R},\\
\label{eq:2.8d}
\gamma u^h+\opp{V}\lambda^h-\tfrac12\phi^h-\opp{K}\phi^h-\beta_0\in X_h^\circ
			&&&\where{H^{1/2}(\Gamma)},\\
\label{eq:2.8e}
\tfrac12\lambda^h+\opp{K^t}\lambda^h+\opp{W}\phi^h\in Y_h^\circ
			&&&\where{H^{-1/2}(\Gamma)}.
\end{alignat}
\end{subequations}
A semidiscrete exterior solution is then defined with Kirchhoff's formula
\begin{equation}\label{eq:2.9}
u^\star =\opp{D}\phi^h-\opp{S}\lambda^h.
\end{equation}
In \eqref{eq:2.9} we have preferred not to name the output of the representation formula $u_+^h$ because we will be interested in this output as a distribution with values in $H^1_\Delta(\mathbb R^d\setminus\Gamma)$ instead of $H^1_\Delta(\Omega_+)$. Existence and uniqueness of solution to \eqref{eq:2.8} can be proved using the Laplace transform \cite[Section 6]{LaSa:2009a}. The technique relates the semidiscrete problem to an exotic transmission problem with two fields in the interior domain and one field in the exterior domain.

\begin{proposition}\label{prop:2.2}
Let $(u^h,\lambda^h,\phi^h)$ be the solution of \eqref{eq:2.8} and let $u^\star$ be defined by \eqref{eq:2.9}. The pair 
\begin{subequations}\label{eq:2.10}
\begin{equation}
(u^h,u^\star)\in \mathrm{TD}(U_h)\times \mathrm{TD}(H^1_\Delta(\mathbb R^d\setminus\Gamma))
\end{equation}
satisfies
\begin{alignat}{6}
\nonumber
(c^{-2}\ddot u^h, w^h)_{\Omega_-} +(\kappa \nabla u^h,\nabla w^h)_{\Omega_-} 
	+\langle\jump{\partial_\nu u^\star},\gamma w^h\rangle \qquad \\
	=\langle \beta_1,\gamma w^h\rangle\quad\forall w^h\in U_h
	&&\qquad& \where{\mathbb R},\\
\ddot u^\star=\Delta u^\star
	&&&\where{L^2(\mathbb R^d\setminus\Gamma)},\\
(\jump{\gamma u^\star},\jump{\partial_\nu u^\star}) \in Y_h \times X_h
	&&&\where{H^{1/2}(\Gamma) \times H^{-1/2}(\Gamma)},\\
(\partial_\nu^- u^\star, \gamma u^h-\gamma^+ u^\star-\beta_0) \in Y_h^\circ \times X_h^\circ
	&&&\where{H^{-1/2}(\Gamma) \times H^{1/2}(\Gamma)}.
\end{alignat}
\end{subequations}
Reciprocally, if $(u^h,u^\star)$ is the unique solution of \eqref{eq:2.10} and
\[
\phi^h=-\jump{\gamma u^\star}, \qquad 
\lambda^h=-\jump{\partial_\nu u^\star},
\] 
the triple $(u^h,\lambda^h,\phi^h)$ is the unique solution of \eqref{eq:2.8}.
\end{proposition}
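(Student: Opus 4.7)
The plan is to verify both implications using the standard jump and averaging relations for the retarded layer potentials. For $u = \opp{D}\phi - \opp{S}\lambda$, the relevant identities are
\[
\jump{\gamma u} = -\phi, \qquad \jump{\partial_\nu u} = -\lambda,
\]
\[
\gamma^+ u = \tfrac12\phi + \opp{K}\phi - \opp{V}\lambda, \qquad
\partial_\nu^- u = -\tfrac12\lambda - \opp{K^t}\lambda - \opp{W}\phi,
\]
which follow from the defining transmission problem \eqref{eq:2.4}, the continuity of $\opp{S}$ across $\Gamma$, the continuity of $\partial_\nu \opp{D}$ across $\Gamma$, and the definitions in \eqref{eq:BIO}.

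For the forward direction, assume $(u^h,\lambda^h,\phi^h)$ solves \eqref{eq:2.8} and let $u^\star$ be given by \eqref{eq:2.9}. The equation $\ddot u^\star=\Delta u^\star$ holds in $L^2(\mathbb R^d\setminus\Gamma)$ by construction, while $\jump{\gamma u^\star}=-\phi^h$ and $\jump{\partial_\nu u^\star}=-\lambda^h$ lie in $Y_h$ and $X_h$ thanks to the discrete constraints in \eqref{eq:2.8}. Since $\partial_\nu^- u^\star=-(\tfrac12\lambda^h+\opp{K^t}\lambda^h+\opp{W}\phi^h)$, the condition \eqref{eq:2.8e} becomes $\partial_\nu^- u^\star\in Y_h^\circ$, while the expression $\gamma^+ u^\star=\tfrac12\phi^h+\opp{K}\phi^h-\opp{V}\lambda^h$ reduces \eqref{eq:2.8d} to $\gamma u^h-\gamma^+ u^\star-\beta_0\in X_h^\circ$. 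Finally, replacing $-\lambda^h$ by $\jump{\partial_\nu u^\star}$ in \eqref{eq:2.8c} produces the interior variational equation of \eqref{eq:2.10}.

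For the reciprocal direction, assume $(u^h,u^\star)$ solves \eqref{eq:2.10} and set $\phi^h:=-\jump{\gamma u^\star}$ and $\lambda^h:=-\jump{\partial_\nu u^\star}$. The third line of \eqref{eq:2.10} immediately gives $\phi^h\in Y_h$ and $\lambda^h\in X_h$. Since $u^\star$ satisfies the wave equation in $\mathbb R^d\setminus\Gamma$ with these prescribed jumps, uniqueness for \eqref{eq:2.4} forces $u^\star=\opp{D}\phi^h-\opp{S}\lambda^h$, i.e.\ \eqref{eq:2.9}. Substituting this representation converts the interior equation of \eqref{eq:2.10} into \eqref{eq:2.8c}, and re-expressing $\partial_\nu^- u^\star$ and $\gamma^+ u^\star$ through the four boundary integral operators converts the last two conditions of \eqref{eq:2.10} into \eqref{eq:2.8e} and \eqref{eq:2.8d}, respectively. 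Uniqueness for \eqref{eq:2.10} is then inherited from that of \eqref{eq:2.8}.

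The argument is essentially bookkeeping with signs and trace conventions; the only structural ingredient is invoking uniqueness for \eqref{eq:2.4} in order to recover the Kirchhoff representation of $u^\star$ from its jumps in the reciprocal direction.
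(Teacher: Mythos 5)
Your verification is correct, and it is essentially the argument the paper intends: the paper states Proposition \ref{prop:2.2} without an explicit proof (deferring to Laplace-transform arguments in the cited reference), and your direct bookkeeping with the jump relations $\jump{\gamma u^\star}=-\phi^h$, $\jump{\partial_\nu u^\star}=-\lambda^h$, the average/jump decomposition of $\gamma^+u^\star$ and $\partial_\nu^- u^\star$, and the uniqueness of the transmission problem \eqref{eq:2.4} to recover Kirchhoff's formula in the reciprocal direction is exactly the intended translation between \eqref{eq:2.8} and \eqref{eq:2.10}. All signs and trace conventions in your identities check out against \eqref{eq:BIO} and \eqref{eq:2.5b}--\eqref{eq:2.5c}.
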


\paragraph{Semidiscretization error.} To study the difference between the solutions of \eqref{eq:2.6} and \eqref{eq:2.8} we will use another exotic transmission problem. We first introduce the elliptic projection $\Pi_h^V:H^1(\Omega_-)\to U_h$ by solving the equations
\begin{equation}\label{eq:proj1}
(\kappa \nabla (\Pi_h^V u-u),\nabla w^h)_{\Omega_-}=0
\quad\forall w^h \in U_h,
\end{equation}
subject to the restrictions
\begin{equation}\label{eq:proj2}
\int_{\Omega_j} (\Pi_h^V u-u)=0 \quad j=1,\ldots,N,
\end{equation}
where $\Omega_j$ are the connected components of $\Omega_-$. 

\begin{proposition}
Let $(u,\lambda,\phi)$ and $(u^h,\lambda^h,\phi^h)$ be the respective solutions of \eqref{eq:2.6} and \eqref{eq:2.8} and let
\[
\varepsilon^h:=u^h-\Pi_h^V u, 
\qquad
\theta^h:=\Pi_h^V u-u,
\]
\[
\varepsilon^\lambda:=\lambda^h-\lambda,
\qquad
\varepsilon^\phi:=\phi^h-\phi,
\qquad
\varepsilon^\star:=u^\star-\opp{D}\phi+\opp{S}\lambda=
\opp{D}\varepsilon^\phi-\opp{S}\varepsilon^\lambda.
\]
Then
\begin{subequations}\label{eq:2.11}
\begin{equation}
(\varepsilon^h,\varepsilon^\star)\in \mathrm{TD}(U_h)\times\mathrm{TD}(H^1_\Delta(\mathbb R^d\setminus\Gamma))
\end{equation}
satisfies
\begin{alignat}{6}
(c^{-2}\ddot \varepsilon^h,w^h)_{\Omega_-} +(\kappa \nabla\varepsilon^h,\nabla w^h)_{\Omega_-}
	+\langle\jump{\partial_\nu \varepsilon^\star},\gamma w^h\rangle \qquad \\ 
		=-(c^{-2}\ddot\theta^h,w^h)_{\Omega_-}\qquad\forall w^h\in U_h
	&&\qquad& \where{\mathbb R},\\
\ddot \varepsilon^\star=\Delta \varepsilon^\star
	&&&\where{L^2(\mathbb R^d\setminus\Gamma)},\\
\gamma \varepsilon^h-\gamma^+ \varepsilon^\star+\gamma \theta^h \in X_h^\circ
	&&&\where{H^{1/2}(\Gamma)},\\
\jump{\gamma \varepsilon^\star}-\phi\in Y_h
	&&&\where{H^{1/2}(\Gamma)},\\
\jump{\partial_\nu \varepsilon^\star}-\lambda \in X_h
	&&&\where{H^{-1/2}(\Gamma)},\\
\partial_\nu^- \varepsilon^\star\in Y_h^\circ
	&&&\where{H^{-1/2}(\Gamma)}.
\end{alignat}
\end{subequations}
Reciprocally, if $(u,\lambda,\phi)$ is the solution of \eqref{eq:2.6}, $\theta^h:=\Pi_h^V u-u$, and $(\varepsilon^h,\varepsilon^\star)$ is the solution of \eqref{eq:2.11}, then $(u^h,\lambda^h,\phi^h)=(\varepsilon^h+\Pi_h^V u,\lambda-\jump{\partial_\nu \varepsilon^\star}, \phi-\jump{\gamma\varepsilon^\star})$ is the unique solution of \eqref{eq:2.8}.
\end{proposition}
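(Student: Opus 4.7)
The plan is to derive \eqref{eq:2.11} as a direct algebraic consequence of the continuous problem \eqref{eq:2.6}, the exotic transmission reformulation from Proposition \ref{prop:2.2}, and the standard jump and trace relations for the retarded layer potentials $\opp{D}$ and $\opp{S}$. Existence and uniqueness of each side have already been established, so the equivalence reduces to verifying, equation by equation, that the differences satisfy \eqref{eq:2.11} and conversely.

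For the interior variational equation, I would substitute $u^h=\varepsilon^h+\Pi_h^V u$ into the first equation of \eqref{eq:2.10}, use the elliptic projection identity \eqref{eq:proj1} to replace $(\kappa\nabla \Pi_h^V u,\nabla w^h)_{\Omega_-}$ by $(\kappa\nabla u,\nabla w^h)_{\Omega_-}$, and then eliminate that term via \eqref{eq:2.6b} tested against $w^h\in U_h$. The $\beta_1$ contribution disappears, the mass terms collapse to the claimed forcing $-(c^{-2}\ddot\theta^h, w^h)_{\Omega_-}$, and the boundary term becomes $\langle \jump{\partial_\nu u^\star}+\lambda, \gamma w^h\rangle$. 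Applying $\jump{\partial_\nu \opp{D}\phi}=0$ and $\jump{\partial_\nu \opp{S}\lambda}=\lambda$ to $\varepsilon^\star=u^\star-\opp{D}\phi+\opp{S}\lambda$ rewrites this as $\langle\jump{\partial_\nu \varepsilon^\star},\gamma w^h\rangle$, as required.

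The wave equation for $\varepsilon^\star$ is immediate since both $u^\star$ and the two layer potentials solve $\ddot u=\Delta u$ off $\Gamma$. The remaining four conditions on $\varepsilon^\star$ follow by propagating the standard jump and trace identities through $\varepsilon^\star=u^\star-\opp{D}\phi+\opp{S}\lambda$. In particular I would compute
\[
\jump{\gamma \varepsilon^\star}-\phi=\jump{\gamma u^\star}, \qquad \jump{\partial_\nu \varepsilon^\star}-\lambda=\jump{\partial_\nu u^\star},
\]
and conclude membership in $Y_h$ and $X_h$ directly from Proposition \ref{prop:2.2}. For $\partial_\nu^-\varepsilon^\star$, the symmetric identity $\tfrac12\lambda+\opp{K^t}\lambda+\opp{W}\phi=0$ from \eqref{eq:2.6} cancels all potential-generated contributions and leaves $\partial_\nu^-\varepsilon^\star=\partial_\nu^- u^\star\in Y_h^\circ$. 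For $\gamma\varepsilon^h-\gamma^+\varepsilon^\star+\gamma\theta^h$, the trace equation in \eqref{eq:2.6} that carries $\beta_0$ cancels the $\beta_0$ appearing in the corresponding statement of Proposition \ref{prop:2.2}, leaving an element of $X_h^\circ$.

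The converse direction runs the same chain of identities backwards: given $(\varepsilon^h,\varepsilon^\star)$ solving \eqref{eq:2.11}, I define $u^h, \phi^h, \lambda^h$ as in the statement, verify that the reconstructed pair $(u^h,u^\star)$ satisfies \eqref{eq:2.10}, and invoke Proposition \ref{prop:2.2} to conclude; uniqueness is inherited from that of \eqref{eq:2.8}. The main obstacle is not analytical content but careful bookkeeping: tracking the sign conventions of the Calder\'on-type jump relations of $\opp{S}$ and $\opp{D}$ together with the averaged operators $\opp{K}$, $\opp{K^t}$, $\opp{V}$, $\opp{W}$ through the many cancellations. Once those are laid out cleanly the proof is essentially mechanical.
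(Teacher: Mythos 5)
Your argument is correct: the paper states this proposition without proof, and your verification—differencing \eqref{eq:2.8c} against \eqref{eq:2.6b} restricted to $U_h$, invoking the orthogonality \eqref{eq:proj1} of the elliptic projection to kill the stiffness contribution of $\theta^h$, and propagating the jump relations $\jump{\gamma\opp{S}\lambda}=0$, $\jump{\partial_\nu\opp{S}\lambda}=\lambda$, $\jump{\gamma\opp{D}\phi}=-\phi$, $\jump{\partial_\nu\opp{D}\phi}=0$ together with the two boundary identities of \eqref{eq:2.6} through $\varepsilon^\star=u^\star-\opp{D}\phi+\opp{S}\lambda$—is exactly the intended routine computation behind the statement. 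The sign bookkeeping you flag as the main hazard checks out (in particular $\jump{\partial_\nu\varepsilon^\star}=\jump{\partial_\nu u^\star}+\lambda=-\varepsilon^\lambda$ and $\jump{\gamma\varepsilon^\star}=\jump{\gamma u^\star}+\phi=-\varepsilon^\phi$), and the converse by reversing the identities and appealing to uniqueness of \eqref{eq:2.8} is fine.
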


\section{Analysis of an equivalent first order system}\label{sec:3}

\paragraph{Equivalent first order system.}
We will analyze problems \eqref{eq:2.10} and \eqref{eq:2.11} simultaneously. We thus look for 
\begin{subequations}\label{eq:3.1}
\begin{equation}
(u^h,u^\star) \in \mathrm{TD}(U_h)\times \mathrm{TD}(H^1_\Delta(\mathbb R^d\setminus\Gamma))
\end{equation}
satisfying
\begin{alignat}{6}
\nonumber
(c^{-2}\ddot u^h, w^h)_{\Omega_-} +(\kappa \nabla u^h,\nabla w^h)_{\Omega_-} 
	+\langle\jump{\partial_\nu u^\star},\gamma w^h\rangle \qquad \\
	=\langle \beta,\gamma w^h\rangle+(c^{-2} r,w^h)_{\Omega_-}\quad\forall w^h\in U_h
	&&\qquad& \where{\mathbb R},\\
\ddot u^\star=\Delta u^\star
	&&&\where{L^2(\mathbb R^d\setminus\Gamma)},\\
\gamma u^h-\gamma^+ u^\star-\alpha \in X_h^\circ
	&&&\where{H^{1/2}(\Gamma)},\\
\jump{\gamma u^\star}-\phi\in Y_h
	&&&\where{H^{1/2}(\Gamma)},\\
\jump{\partial_\nu u^\star}-\lambda \in X_h
	&&&\where{H^{-1/2}(\Gamma)},\\
\partial_\nu^- u^\star\in Y_h^\circ
	&&&\where{H^{-1/2}(\Gamma)},
\end{alignat}
\end{subequations}
for given data $\alpha,\beta,\lambda,\phi$ and $r$ taking values in the appropriate spaces. We will first transform \eqref{eq:3.1} into a first order system. To do that we introduce the antidifferentiation operator: given $f\in \mathrm{TD}(X)$, $\partial^{-1} f$ is the only element of $\mathrm{TD}(X)$ whose distributional derivative is $f$. The operator $\partial^{-1}$ is a weak version of
\[
(\partial^{-1} f)(t)=\int_0^t f(\tau)\mathrm d\tau.
\]
We will need the Sobolev space \cite{GiRa:1986}
\[
\mathbf H(\mathrm{div}, \mathbb R^d\setminus\Gamma):=
	\{ \mathbf v\in \mathbf L^2(\mathbb R^d):=L^2(\mathbb R^d)^d
		\,:\, \nabla\cdot\mathbf v\in L^2(\mathbb R^d\setminus\Gamma)\},
\]
endowed with its natural norm, which we will denote $\|\cdot\|_{\mathrm{div},\mathbb R^d\setminus\Gamma}$. For an element $\mathbf v$ of this space we can define the two sided normal components on $\Gamma$, $\gamma_\nu^\pm\mathbf v$ and the corresponding jump $\jump{\gamma_\nu\mathbf v}:=\gamma_\nu^-\mathbf v-\gamma_\nu^+\mathbf v$. We need finally the weighted orthogonal projection $P_h:L^2(\Omega_-)\to U_h$
\[
P_h r\in U_h, \qquad (c^{-2} (P_h r-r),w^h)_{\Omega_-}=0 \qquad \forall w^h \in U_h,
\]
a second discrete space
$
\mathbf V_h:=\nabla U_h=\{ \nabla u^h\,:\, u^h \in U_h\},
$
and the discrete operators $\mathrm{div}_h^\kappa:\mathbf L^2(\Omega_-)\to U_h$ and $\gamma_h^t: H^{-1/2}(\Gamma)\to U_h,$
given by the relation
\begin{equation}\label{eq:discop}
(c^{-2} (\mathrm{div}_h^\kappa \mathbf v+ \gamma_h^t \eta), w^h)_{\Omega_-}
	=-(\kappa\mathbf v,\nabla w^h)_{\Omega_-}+\langle \eta,\gamma w^h\rangle \qquad \forall w^h \in U_h.
\end{equation}
The first order formulation involves two new unknowns $\mathbf v^h:=\partial^{-1}\nabla u^h$ and $\mathbf v^\star:=\partial^{-1} \nabla u^\star$.
It looks for
\begin{subequations}\label{eq:3.2}
\begin{equation}
(u^h,u^\star,\mathbf v^h,\mathbf v^\star)
	\in \mathrm{TD}(U_h)\times \mathrm{TD}(H^1(\mathbb R^d\setminus\Gamma))
		\times \mathrm{TD}(\mathbf V_h)\times \mathrm{TD}(\mathbf H(\mathrm{div},\mathbb R^d\setminus\Gamma))
\end{equation}
satisfying
\begin{alignat}{6}\label{eq:3.2a}
\dot u^h = 
	\mathrm{div}_h^\kappa \mathbf v^h-\gamma_h^t \jump{\gamma_\nu \mathbf v^\star}
	+\gamma_h^t \beta	 + P_h r, 
	&\qquad&& \where{U_h},\\
\dot u^\star=\nabla\cdot\mathbf v^\star
	&&& \where{L^2(\mathbb R^d\setminus\Gamma)},\\
\dot{\mathbf v}^h=\nabla u^h
	&\qquad&& \where{\mathbf V_h},\\
\dot{\mathbf v}^\star=\nabla u^\star
	&&& \where{\mathbf L^2(\mathbb R^d\setminus\Gamma)},\\
\gamma u^h-\gamma^+ u^\star-\alpha \in X_h^\circ
	&&& \where{H^{1/2}(\Gamma)},\\
\jump{\gamma u^\star}-\phi \in Y_h
	&&& \where{H^{1/2}(\Gamma)},\\
\jump{\gamma_\nu \mathbf v^\star}-\partial^{-1}\lambda \in X_h
	&&& \where{H^{-1/2}(\Gamma)},\\
\gamma_\nu^-\mathbf v^\star \in Y_h^\circ
	&&& \where{H^{-1/2}(\Gamma)}.
\end{alignat}
\end{subequations}

\begin{proposition}
Problems \eqref{eq:3.1} and \eqref{eq:3.2} are equivalent.
\end{proposition}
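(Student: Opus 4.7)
The approach is to introduce the two new unknowns $\mathbf v^h:=\partial^{-1}\nabla u^h$ and $\mathbf v^\star:=\partial^{-1}\nabla u^\star$ and to show that the forward direction (\eqref{eq:3.1} $\Rightarrow$ \eqref{eq:3.2}) is obtained by applying $\partial^{-1}$ once in time to the variational equation and to the normal-derivative jump conditions, while the reverse direction is obtained by differentiating once in time and using the $\mathrm{TD}$-causality to invert the antidifferentiation. Both directions ultimately rest on the fact that on $\mathrm{TD}(X)$ the operator $\partial^{-1}$ is the inverse of the distributional time derivative.

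For the forward direction, I would first verify memberships: $\mathbf v^h\in \mathrm{TD}(\mathbf V_h)$ is immediate since $\mathbf V_h=\nabla U_h$ is finite dimensional, and $\mathbf v^\star\in \mathrm{TD}(\mathbf H(\mathrm{div},\mathbb R^d\setminus\Gamma))$ follows from the chain
\[
\nabla\cdot\mathbf v^\star=\partial^{-1}\nabla\cdot\nabla u^\star=\partial^{-1}\Delta u^\star=\partial^{-1}\ddot u^\star=\dot u^\star,
\]
which simultaneously yields $\dot u^\star=\nabla\cdot\mathbf v^\star$. The kinematic identities $\dot{\mathbf v}^h=\nabla u^h$ and $\dot{\mathbf v}^\star=\nabla u^\star$ hold by construction. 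For the discrete variational equation I would antidifferentiate \eqref{eq:3.1a} once in time and use $\partial^{-1}\jump{\partial_\nu u^\star}=\jump{\gamma_\nu\mathbf v^\star}$ (since $\partial_\nu u^\star=\partial_t\,\gamma_\nu\mathbf v^\star$ in $H^{-1/2}(\Gamma)$); testing with $c^{-2}w^h$ and invoking the defining identity \eqref{eq:discop} of $\mathrm{div}_h^\kappa$ and $\gamma_h^t$, together with $(c^{-2}P_h r,w^h)_{\Omega_-}=(c^{-2}r,w^h)_{\Omega_-}$, reproduces \eqref{eq:3.2a}. The boundary inclusions on $\gamma u^h-\gamma^+u^\star-\alpha$ and $\jump{\gamma u^\star}-\phi$ transfer verbatim, while the normal-derivative conditions become $\jump{\gamma_\nu\mathbf v^\star}-\partial^{-1}\lambda\in X_h$ and $\gamma_\nu^-\mathbf v^\star\in Y_h^\circ$ after one application of $\partial^{-1}$, using that $X_h$ and $Y_h^\circ$ are closed subspaces and hence stable under antidifferentiation of $\mathrm{TD}$-valued distributions (the data $\beta$ and $r$ on the right-hand side are similarly understood as their antiderivatives, consistently with the explicit $\partial^{-1}\lambda$ appearing in \eqref{eq:3.2g}).

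For the reverse direction, I would start from a solution of \eqref{eq:3.2}, combine the kinematic identities $\dot{\mathbf v}^h=\nabla u^h$ and $\dot{\mathbf v}^\star=\nabla u^\star$ with causality in $\mathrm{TD}$ to recover $\mathbf v^h=\partial^{-1}\nabla u^h$ and $\mathbf v^\star=\partial^{-1}\nabla u^\star$ uniquely. The second-order interior equation is then recovered by
\[
\ddot u^\star=(\dot u^\star)^{\cdot}=(\nabla\cdot\mathbf v^\star)^{\cdot}=\nabla\cdot\dot{\mathbf v}^\star=\Delta u^\star,
\]
and differentiating \eqref{eq:3.2a} once in time (again using \eqref{eq:discop}) reproduces the second-order variational equation of \eqref{eq:3.1a}. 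Differentiating the inclusions $\jump{\gamma_\nu\mathbf v^\star}-\partial^{-1}\lambda\in X_h$ and $\gamma_\nu^-\mathbf v^\star\in Y_h^\circ$ returns the corresponding jump conditions in \eqref{eq:3.1}, while the remaining boundary inclusions are unchanged.

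The calculation is conceptually straightforward; the only real obstacle is bookkeeping, namely making the correspondence between the discrete operators $\mathrm{div}_h^\kappa,\gamma_h^t,P_h$ and the terms of the variational equation entirely explicit via \eqref{eq:discop}, and tracking the implicit antidifferentiation of the data $(\beta,r,\lambda)$ that is signalled by the $\partial^{-1}\lambda$ in \eqref{eq:3.2g}. Once that bookkeeping is done, the two systems are formally equivalent term by term.
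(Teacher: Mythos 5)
Your proof is correct and follows exactly the route the paper intends: the proposition is stated without proof, but the substitutions $\mathbf v^h=\partial^{-1}\nabla u^h$ and $\mathbf v^\star=\partial^{-1}\nabla u^\star$ introduced immediately before the statement are precisely your starting point, and both directions reduce, as you say, to one application of $\partial^{-1}$ (respectively of the distributional time derivative) combined with the defining relation \eqref{eq:discop}, the identity $\gamma_\nu^\pm\mathbf v^\star=\partial^{-1}\partial_\nu^\pm u^\star$, and the invertibility of $\partial^{-1}$ on $\mathrm{TD}(X)$. You were also right to flag the data bookkeeping: a literal antidifferentiation of \eqref{eq:3.1} yields $\gamma_h^t\partial^{-1}\beta+P_h\partial^{-1}r$ rather than the $\gamma_h^t\beta+P_h r$ printed in \eqref{eq:3.2a} (the classical restatement \eqref{eq:3.4c} does carry $\partial^{-1}\beta$), so the equivalence holds with the relabelling of data that you make explicit.
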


\paragraph{An unbounded operator.} Consider the space
\[
\mathcal H:=U_h \times L^2(\mathbb R^d\setminus\Gamma) 
	\times \mathbf V_h \times \mathbf L^2(\mathbb R^d\setminus\Gamma),
\]
endowed with inner product whose associated norm is
\[
\| U\|_{\mathcal H}^2 
	= \| (u^h,u^\star,\mathbf v^h,\mathbf v^\star)\|_{\mathcal H}^2 
	:= \| c^{-1}u^h\|_{\Omega_-}^2 + \| u^\star\|_{\mathbb R^d\setminus\Gamma}^2
			+ \| \kappa^{1/2} \mathbf v^h\|_{\Omega_-}^2+ \|\mathbf v^\star\|_{\mathbb R^d\setminus\Gamma}^2.
\]
We also introduce the unbounded operator
\begin{equation}\label{eq:operatorA}
\mathcal A U=\mathcal A (u^h,u^\star,\mathbf v^h,\mathbf v^\star)
	:=(\mathrm{div}_h^\kappa\mathbf v_h-\gamma_h^t\jump{\gamma_\nu\mathbf v^\star},
		\nabla\cdot\mathbf v^\star,
		\nabla u^h,
		\nabla u^\star)
\end{equation}
defined in the domain $D(\mathcal A):=\mathcal U \times \boldsymbol{\mathcal V},$ 
where
\begin{eqnarray*}
\mathcal U &:=&
	\{ (u^h,u^\star)\in U_h\times H^1(\mathbb R^d\setminus\Gamma)\,:\,
			\gamma u^h-\gamma^+ u^\star\in X_h^\circ, \quad \jump{\gamma u^\star}\in Y_h\},\\
\boldsymbol{\mathcal V} &:=&
	\{ (\mathbf v^h,\mathbf v^\star)\in \mathbf V_h\times \mathbf H(\mathrm{div},\mathbb R^d\setminus\Gamma))\,:\,
			\jump{\gamma_\nu\mathbf v^\star}\in X_h,\quad \gamma_\nu^-\mathbf v^\star\in Y_h^\circ\}.
\end{eqnarray*}
For basic concepts of contractive $C_0$-semigroups of operators on Hilbert spaces (and the associated groups of isometries), we refer to \cite[Chapter 4]{Kesavan:1989} and the more comprehensive \cite{Pazy:1983}.

\begin{proposition}\label{prop:3.2}
The operators $\pm\mathcal A:D(\mathcal A)\subset\mathcal H \to \mathcal H$ are maximal dissipative. Therefore $\mathcal A$ is the inifinitesimal generator of a $C_0$-group of isometries in $\mathcal H$.
\end{proposition}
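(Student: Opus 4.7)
My plan is to apply the Lumer--Phillips theorem to each of $\pm\mathcal A$: it suffices to verify (i) $D(\mathcal A)$ is dense in $\mathcal H$, (ii) $\pm\mathcal A$ are dissipative, and (iii) the range of $I\mp\mathcal A$ is all of $\mathcal H$. Density is routine because the conditions defining $D(\mathcal A)$ are finitely many continuous linear constraints on the boundary traces of elements drawn from a dense subspace of $\mathcal H$. Once dissipativity is established with equality, $\mathrm{Re}(\mathcal AU,U)_{\mathcal H}=0$, the two contraction semigroups obtained from $\pm\mathcal A$ glue into the desired $C_0$-group of isometries.

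\textbf{Dissipativity.} Expanding $(\mathcal AU,U)_{\mathcal H}$ from \eqref{eq:operatorA} and applying the identity \eqref{eq:discop} with test function $w^h=u^h$ cancels the interior $\kappa$-weighted cross terms and yields a boundary residual $-\langle\jump{\gamma_\nu\mathbf v^\star},\gamma u^h\rangle$. The divergence theorem on $\mathbb R^d\setminus\Gamma$ similarly cancels the exterior gradient/divergence pair and contributes $\langle\gamma_\nu^-\mathbf v^\star,\gamma^-u^\star\rangle-\langle\gamma_\nu^+\mathbf v^\star,\gamma^+u^\star\rangle$. Splitting these via $\gamma^-u^\star=\gamma^+u^\star+\jump{\gamma u^\star}$ and $\gamma_\nu^+\mathbf v^\star=\gamma_\nu^-\mathbf v^\star-\jump{\gamma_\nu\mathbf v^\star}$ collapses the whole computation to
\[
(\mathcal AU,U)_{\mathcal H}=\langle\gamma_\nu^-\mathbf v^\star,\jump{\gamma u^\star}\rangle-\langle\jump{\gamma_\nu\mathbf v^\star},\gamma u^h-\gamma^+u^\star\rangle,
\]
and both pairings vanish by the defining conditions of $D(\mathcal A)$, since $\gamma_\nu^-\mathbf v^\star\in Y_h^\circ$ is paired with $\jump{\gamma u^\star}\in Y_h$ and $\jump{\gamma_\nu\mathbf v^\star}\in X_h$ is paired with $\gamma u^h-\gamma^+u^\star\in X_h^\circ$. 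So $(\mathcal AU,U)_{\mathcal H}=0$ identically, giving dissipativity of both $+\mathcal A$ and $-\mathcal A$ simultaneously.

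\textbf{Maximality.} For arbitrary $F=(f^h,f^\star,\mathbf g^h,\mathbf g^\star)\in\mathcal H$, I solve $(I-\mathcal A)U=F$ by using the third and fourth components of the system to fix $\mathbf v^h=\nabla u^h+\mathbf g^h\in\mathbf V_h$ and $\mathbf v^\star=\nabla u^\star+\mathbf g^\star$, and then substituting into the remaining two. Testing with $(w^h,w^\star)$ drawn from the Hilbert space
\[
\mathcal U_\star:=\{(w^h,w^\star)\in U_h\times H^1(\mathbb R^d\setminus\Gamma)\,:\,\gamma w^h-\gamma^+w^\star\in X_h^\circ,\;\jump{\gamma w^\star}\in Y_h\}
\]
produces, after the same integration-by-parts manipulations as above, the continuous and coercive bilinear form $a((u^h,u^\star),(w^h,w^\star))=(c^{-2}u^h,w^h)_{\Omega_-}+(\kappa\nabla u^h,\nabla w^h)_{\Omega_-}+(u^\star,w^\star)_{\mathbb R^d\setminus\Gamma}+(\nabla u^\star,\nabla w^\star)_{\mathbb R^d\setminus\Gamma}$ equated to a continuous linear functional built from $F$. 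Lax--Milgram yields a unique $(u^h,u^\star)\in\mathcal U_\star$; the remaining polar membership conditions $\jump{\gamma_\nu\mathbf v^\star}\in X_h$ and $\gamma_\nu^-\mathbf v^\star\in Y_h^\circ$ then emerge as natural boundary conditions of the variational problem, read off by separately testing with pairs $(w^h,0)$ and $(0,w^\star)$. Replacing $I-\mathcal A$ by $I+\mathcal A$ requires only sign changes in the resulting bilinear form, and the same Lax--Milgram argument applies.

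\textbf{Main obstacle.} The most delicate step is the passage from the strong resolvent equation to the variational formulation on $\mathcal U_\star$, and in particular the recovery of the four trace conditions defining $D(\mathcal A)$ from the weak solution. Specifically, one must check that $\mathcal U_\star$ is rich enough in test pairs — including those with one component vanishing and the other supported near a single connected component of $\Gamma$ — so that each of the natural polar conditions on $\mathbf v^\star$ can be isolated. This reduces to a standard surjectivity/extension argument for the trace maps into $(H^{\pm 1/2}(\Gamma))^2$ compatible with the discrete constraints, which is the sole technically nontrivial point in an otherwise calculational proof.
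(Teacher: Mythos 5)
Your proposal is correct and follows essentially the same route as the paper: the identical trace-pairing cancellation for $(\mathcal A U,U)_{\mathcal H}=0$, and the identical surjectivity argument via a coercive variational problem on $\mathcal U$ with Lax--Milgram, recovering the conditions $\jump{\gamma_\nu\mathbf v^\star}\in X_h$ and $\gamma_\nu^-\mathbf v^\star\in Y_h^\circ$ as natural conditions through the surjectivity of $(w^h,w^\star)\mapsto(w^h,\jump{\gamma w^\star},\gamma^+w^\star-\gamma w^h)$. The only cosmetic difference is your explicit appeal to density of $D(\mathcal A)$ (where your ``finitely many constraints'' justification is slightly off for the condition $\jump{\gamma u^\star}\in Y_h$, though density follows anyway from a trace-lifting argument or from the fact that a dissipative operator with $\mathrm{ran}(\mathcal I-\mathcal A)=\mathcal H$ on a Hilbert space is automatically densely defined).
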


\begin{proof}
We first need to prove that
\begin{equation}\label{eq:3.4b}
(\mathcal A\,U,U)_{\mathcal H}=0 \qquad \forall U\in D(\mathcal A),
\end{equation}
which means, by definition, that $\pm\mathcal A$ are dissipative. To prove \eqref{eq:3.4b} we proceed as follows: given $U=
(u^h,u^\star,\mathbf v^h,\mathbf v^\star)\in D(\mathcal A)$,
\begin{eqnarray*}
(\mathcal A\,U,U)_{\mathcal H}
	&=& (c^{-2}(\mathrm{div}_h^\kappa\mathbf v_h-\gamma_h^t\jump{\gamma_\nu\mathbf v^\star}),u^h)_{\Omega_-}
		 +(\kappa\nabla u^h,\mathbf v^h)_{\Omega_-} \\
	& & + (\nabla\cdot\mathbf v^\star,u^\star)_{\mathbb R^d\setminus\Gamma}
		+(\nabla u^\star,\mathbf v^\star)_{\mathbb R^d\setminus\Gamma} \\
	& =& -\langle\jump{\gamma_\nu\mathbf v^\star},\gamma u^h\rangle 
			+\langle \gamma_\nu^-\mathbf v^\star,\gamma^- u^\star\rangle 
			-\langle \gamma_\nu^+\mathbf v^\star,\gamma^+ u^\star\rangle \\
	& = & -\langle\jump{\gamma_\nu\mathbf v^\star},\gamma^+ u^\star\rangle
			+\langle \gamma_\nu^-\mathbf v^\star,\gamma^+ u^\star\rangle 
			-\langle \gamma_\nu^+\mathbf v^\star,\gamma^+ u^\star\rangle = 0.
\end{eqnarray*}
We have applied: the definition of the discrete operators and the weak divergence theorem (definition of $\gamma^\pm_\nu$) in the second equality, and the transmission conditions included in the definitions $\mathcal U$ and $\boldsymbol{\mathcal V}$ for the third equality. 

To prove maximal dissipativity, we need to show that $\mathcal I\pm \mathcal A:D(\mathcal A)\to \mathcal H$ are surjective. We will only show the details for $\mathcal I-\mathcal A$, since the other case is essentially identical. Given $F=(f^h,f^\star,\mathbf g^h,\mathbf g^\star)\in \mathcal H$, we solve the coercive variational problem
\begin{subequations}
\begin{alignat}{6}
& (u^h,u^\star)\in \mathcal U \\
\nonumber
& (c^{-2} u^h,w^h)_{\Omega_-} + (\kappa \nabla u^h,\nabla w^h)_{\Omega_-}
	+(u^\star,w^\star)_{\mathbb R^d}+(\nabla u^\star,\nabla w^\star)_{\mathbb R^d\setminus\Gamma} \\
\label{eq:3.5b}
& \qquad = (c^{-2} f^h,w^h)_{\Omega_-} 
			- (\kappa\mathbf g^h,\nabla w^h)_{\Omega_-} 
			+ (f^\star,w^\star)_{\mathbb R^d}- (\mathbf g^\star,\nabla w^\star)_{\mathbb R^d\setminus\Gamma} \\
\nonumber
& \hspace{10cm} \forall (w^h,w^\star)\in \mathcal U,
\end{alignat}
\end{subequations}
and define
\begin{equation}\label{eq:3.6}
\mathbf v^h=\nabla u^h+\mathbf g^h, \qquad \mathbf v^\star=\nabla u^\star+\mathbf g^\star.
\end{equation}
If we test \eqref{eq:3.5b} with $(0,w^\star)\in \{0\}\times \mathcal D(\mathbb R^d\setminus\Gamma)\subset\mathcal U$ and substitute the second equation in \eqref{eq:3.6}, it follows that
\[
(u^\star,w^\star)_{\mathbb R^d\setminus\Gamma}+(\mathbf v^\star,\nabla w^\star)_{\mathbb R^d\setminus\Gamma}
	=(f^\star,w^\star)_{\mathbb R^d\setminus\Gamma} \qquad \forall w^\star \in \mathcal D(\mathbb R^d\setminus\Gamma).
\]
Therefore
\begin{equation}\label{eq:3.7}
u^\star=\nabla\cdot\mathbf v^\star+f^\star,
\end{equation}
which implies that $\mathbf v^\star\in \mathbf H(\mathrm{div},\mathbb R^d\setminus\Gamma).$ Substituting now \eqref{eq:3.6} and \eqref{eq:3.7} in \eqref{eq:3.5b}, we obtain
\begin{equation}\label{eq:3.9}
(c^{-2} u^h,w^h)_{\Omega_-}
	+(\kappa\mathbf v^h,\nabla w^h)_{\Omega_-}
	+(\nabla\cdot\mathbf v^\star,w^\star)_{\mathbb R^d\setminus\Gamma}
	+(\mathbf v^\star,\nabla w^\star)_{\mathbb R^d\setminus\Gamma}
	=(c^{-2} f^h,w^h)_{\Omega_-}
\end{equation}
for all $(w^h,w^\star)\in \mathcal U$. However, by the definition of the discrete operators \eqref{eq:discop} and the weak divergence theorem, we can equivalently (after some term rearrangement) write \eqref{eq:3.9} as 
\begin{eqnarray}\label{eq:3.10}
\nonumber
(c^{-2}(u^h-f^h-\mathrm{div}_h^\kappa\mathbf v^h+\gamma_h^t\jump{\gamma\mathbf v^\star}),w^h)_{\Omega_-}
\hspace{1cm}
	& & \\
+\langle\gamma_\nu^-\mathbf v^\star,\jump{\gamma w^\star}\rangle
+\langle\jump{\gamma_\nu \mathbf v^\star},\gamma^+ w^\star-\gamma w^h\rangle 
	&=&0 \quad\forall (w^h,w^\star)\in \mathcal U.
\end{eqnarray}
Let then $(\psi^h,\xi^h)\in Y_h\times X_h^\circ\subset H^{1/2}(\Gamma)^2$ and $w^h\in U_h$. We can choose $w^\star\in H^1(\mathbb R^d\setminus\Gamma)$ satisfying the trace conditions
$
\gamma^+ w^\star=\gamma w^h+\xi^h
$
and 
$\gamma^- w^\star=\gamma^+ w^\star+\psi^h.$
This proves that the operator
\[
\mathcal U\ni (w^h,w^\star) \longmapsto 
	(w^h,\jump{\gamma w^\star},\gamma^+ w^\star-\gamma w^h) \in U_h\times Y_h\times X_h^\circ
\]
is surjective. Therefore, \eqref{eq:3.10} is equivalent to
\begin{equation}\label{eq:3.11}
u^h=\mathrm{div}_h^\kappa\mathbf v^h-\gamma_h^t\jump{\gamma\mathbf v^\star}+f^h
\end{equation}
and the transmission conditions
\begin{equation}\label{eq:3.12}
\gamma_\nu^-\mathbf v^\star \in Y_h^\circ, \qquad \jump{\gamma_\nu \mathbf v^\star}\in X_h.
\end{equation}
These conditions imply that $(\mathbf v^h,\mathbf v^\star)\in \boldsymbol{\mathcal V}$. Therefore $U=(u^h,u^\star,\mathbf v^h,\mathbf v^\star)\in D(\mathcal A)$, and, finally, the collection of \eqref{eq:3.6}, \eqref{eq:3.7}, and \eqref{eq:3.11} implies that $U=\mathcal A U+F$. This finishes the proof of surjectivity of $\mathcal I-\mathcal A$.
\end{proof}

\paragraph{Lifting of the boundary conditions.} The next step is the construction of a lifting operator to move all non-homogeneities in the transmission conditions of \eqref{eq:3.2} (this includes the action of $\beta$ in the right-hand-side of \eqref{eq:3.2a}) to a right-hand-side of an operator equation $\dot U=\mathcal A U+F$. This operator is defined in Proposition \ref{prop:3.3}. Note that we do not give a bound for the norm of $\mathbf v^h$ because it will not be used in the sequel. The expression {\em $C$ is independent of $h$} will be used from this moment on to refer to a constant $C$ that is allowed to depend on parameters of the equation and on the geometry, but not on the choice of the three discrete subspaces involved.

\begin{proposition}\label{prop:3.3}
Given $(\varphi,\psi,\eta,\mu) \in H^{1/2}(\Gamma)^2\times H^{-1/2}(\Gamma)^2$, there exists a unique
\begin{subequations}\label{eq:3.13}
\begin{equation}
(u^h,u^\star,\mathbf v^h,\mathbf v^\star)\in 
	U_h\times H^1(\mathbb R^d\setminus\Gamma)\times \mathbf V_h \times \mathbf H(\mathrm{div},\mathbb R^d\setminus\Gamma)
\end{equation}
such that
\begin{alignat}{6}
u^h = 
	\mathrm{div}_h^\kappa \mathbf v^h-\gamma_h^t \jump{\gamma_\nu \mathbf v^\star}
	+\gamma_h^t \eta, & \qquad &
			 u^\star=\nabla\cdot\mathbf v^\star,\\
\mathbf v^h=\nabla u^h,& &
			\mathbf v^\star=\nabla u^\star,\\
\gamma u^h-\gamma^+ u^\star-\varphi \in X_h^\circ, & & 
			\jump{\gamma u^\star}-\psi \in Y_h,\\
\jump{\gamma_\nu \mathbf v^\star}-\mu \in X_h,& &
			\gamma_\nu^-\mathbf v^\star \in Y_h^\circ.
\end{alignat}
\end{subequations}
Furthermore, there exists $C>0$, independent of $h$, such that
\[
\| u^h\|_{1,\Omega_-}+\| u^\star\|_{1,\mathbb R^d\setminus\Gamma}
	+\|\mathbf v^\star\|_{\mathrm{div},\mathbb R^d\setminus\Gamma}
\le C (\|\varphi\|_{1/2,\Gamma}+\|\psi\|_{1/2,\Gamma}+\|\eta\|_{-1/2,\Gamma}+\|\mu\|_{-1/2,\Gamma}).
\]
\end{proposition}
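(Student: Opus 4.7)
The approach is to reduce (\ref{eq:3.13}) to a coercive variational problem on the pair $(u^h, u^\star)$, in the spirit of the surjectivity argument for $\mathcal I-\mathcal A$ from the proof of Proposition \ref{prop:3.2}, with the non-homogeneous data lifted to the right-hand-side. First I would eliminate the two vector unknowns using $\mathbf v^h=\nabla u^h$ and $\mathbf v^\star=\nabla u^\star$; after invoking the definition (\ref{eq:discop}) of $\mathrm{div}_h^\kappa$ and $\gamma_h^t$, the first equation of (\ref{eq:3.13}) becomes the Galerkin identity
\[
(c^{-2}u^h, w^h)_{\Omega_-} + (\kappa\nabla u^h, \nabla w^h)_{\Omega_-} = \langle \eta - \jump{\gamma_\nu\nabla u^\star}, \gamma w^h\rangle \qquad\forall w^h\in U_h,
\]
while the combination $u^\star=\nabla\cdot\mathbf v^\star$ and $\mathbf v^\star=\nabla u^\star$ collapses to the modified Helmholtz equation $\Delta u^\star=u^\star$ in $\mathbb R^d\setminus\Gamma$. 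Testing the Helmholtz equation against $w^\star$ for $(w^h,w^\star)\in\mathcal U$ and adding to the Galerkin identity, the boundary terms coming from integration by parts reduce, after inserting the polar conditions $\jump{\gamma_\nu\nabla u^\star}-\mu\in X_h$ and $\gamma_\nu^-\nabla u^\star\in Y_h^\circ$ together with $\jump{\gamma w^\star}\in Y_h$ and $\gamma w^h-\gamma^+ w^\star\in X_h^\circ$, to a clean right-hand-side of the form $\langle\eta,\gamma w^h\rangle+\langle\mu,\gamma w^h-\gamma^+ w^\star\rangle$.

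Next, I would lift the Dirichlet-type data by choosing, via continuous right inverses of the trace operators $\gamma^\pm$, a function $u_\ell\in H^1(\mathbb R^d\setminus\Gamma)$ with $\gamma^+ u_\ell=-\varphi$ and $\gamma^- u_\ell=\psi-\varphi$, satisfying $\|u_\ell\|_{1,\mathbb R^d\setminus\Gamma}\le C(\|\varphi\|_{1/2,\Gamma}+\|\psi\|_{1/2,\Gamma})$. Writing $u^\star=u_\ell+\tilde u^\star$, the reduced problem becomes the search for $(u^h,\tilde u^\star)\in\mathcal U$ such that
\[
(c^{-2}u^h,w^h)_{\Omega_-}+(\kappa\nabla u^h,\nabla w^h)_{\Omega_-}+(\tilde u^\star,w^\star)_{\mathbb R^d}+(\nabla\tilde u^\star,\nabla w^\star)_{\mathbb R^d\setminus\Gamma}=L(w^h,w^\star)\qquad\forall (w^h,w^\star)\in\mathcal U,
\]
where $L$ is a bounded linear functional on $\mathcal U$ absorbing the $\eta$-, $\mu$-, and $u_\ell$-contributions, with norm controlled by $\|\eta\|_{-1/2,\Gamma}+\|\mu\|_{-1/2,\Gamma}+\|u_\ell\|_{1,\mathbb R^d\setminus\Gamma}$. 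Coercivity of the bilinear form on $\mathcal U$ in the norm $\|u^h\|_{1,\Omega_-}+\|\tilde u^\star\|_{1,\mathbb R^d\setminus\Gamma}$ is immediate from the uniform lower bounds on $c^{-2}$ and $\kappa$, so Lax--Milgram yields existence, uniqueness, and the bound announced in the proposition (with constant independent of $h$, since the bilinear form does not see the discrete spaces).

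Finally, to recover the full first-order system I would set $\mathbf v^h:=\nabla u^h$ and $\mathbf v^\star:=\nabla u^\star$. Testing the variational identity with $w^h=0$ and $w^\star\in\mathcal D(\mathbb R^d\setminus\Gamma)$ returns $\Delta u^\star=u^\star$ distributionally, so $\mathbf v^\star\in\mathbf H(\mathrm{div},\mathbb R^d\setminus\Gamma)$ with $\|\mathbf v^\star\|_{\mathrm{div},\mathbb R^d\setminus\Gamma}$ controlled by $\|u^\star\|_{1,\mathbb R^d\setminus\Gamma}$. Reading the variational identity backwards through (\ref{eq:discop}) and invoking the surjectivity of the map $(w^h,w^\star)\in\mathcal U\mapsto (w^h,\jump{\gamma w^\star},\gamma^+ w^\star-\gamma w^h)\in U_h\times Y_h\times X_h^\circ$ (established inside the proof of Proposition \ref{prop:3.2}) then produces both the algebraic identity $u^h=\mathrm{div}_h^\kappa\mathbf v^h-\gamma_h^t\jump{\gamma_\nu\mathbf v^\star}+\gamma_h^t\eta$ and the remaining transmission conditions $\jump{\gamma_\nu\mathbf v^\star}-\mu\in X_h$ and $\gamma_\nu^-\mathbf v^\star\in Y_h^\circ$ as natural boundary conditions. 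The main obstacle will be the careful bookkeeping in the two integration-by-parts steps: one must ensure that $\mu$, which enters only implicitly through a polar condition on $\jump{\gamma_\nu\mathbf v^\star}$, is reproduced on the right-hand-side with the correct sign and pairing, and, conversely, that the variational formulation truly forces back the original polar transmission conditions rather than strictly weaker forms of them.
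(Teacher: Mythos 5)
Your proposal follows essentially the same route as the paper: eliminate the vector unknowns, pass to the coercive second-order variational problem on $\mathcal U$ (the paper's \eqref{eq:3.15}), lift the Dirichlet-type data $(\varphi,\psi)$ by a trace lifting into $H^1(\mathbb R^d\setminus\Gamma)$, apply Lax--Milgram with $h$-independent constants, and recover $\mathbf v^h,\mathbf v^\star$ together with the natural polar transmission conditions by reading the identity backwards through \eqref{eq:discop} and the surjectivity argument from the proof of Proposition \ref{prop:3.2}. The only slip is the sign of the $\mu$-term, which should read $\langle\mu,\gamma^+w^\star-\gamma w^h\rangle$ (equivalently $\langle\eta-\mu,\gamma w^h\rangle+\langle\mu,\gamma^+w^\star\rangle$ as in \eqref{eq:3.15}); you flag this bookkeeping issue yourself and it does not affect the structure of the argument.
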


\begin{proof}
Problem \eqref{eq:3.13} is equivalent to the problem that looks for 
\begin{subequations}\label{eq:3.14}
\begin{equation}
(u^h,u^\star)\in 
	U_h\times H^1_\Delta(\mathbb R^d\setminus\Gamma),
\end{equation}
satisfying
\begin{alignat}{6}
(c^{-2} u^h,w^h)_{\Omega_-}  
	+ (\kappa \nabla u^h,\nabla w^h)_{\Omega_-}+\langle\jump{\partial_\nu u^\star},\gamma w^h\rangle
	=\langle \eta,\gamma w^h\rangle & \quad & \forall w^h \in U_h,\\
			 u^\star=\Delta u^\star, & & \\
\gamma u^h-\gamma^+ u^\star-\varphi \in X_h^\circ, & & 
			\jump{\gamma u^\star}-\psi \in Y_h,\\
\jump{\partial_\nu u^\star}-\mu \in X_h,& &
			\partial_\nu^-u^\star \in Y_h^\circ,
\end{alignat}
\end{subequations}
and then computes $\mathbf v^h=\nabla u^h$ and $\mathbf v^\star=\nabla u^\star$. The variational formulation of \eqref{eq:3.14} is
\begin{subequations}\label{eq:3.15}
\begin{alignat}{6}
& (u^h,u^\star)\in U_h\times H^1(\mathbb R^d\setminus\Gamma),\\
& \jump{\gamma u^\star}-\psi\in Y_h, \qquad \gamma^+ u^\star-\gamma u^h-\varphi \in X_h^\circ,\\
& (c^{-2} u^h,w^h)_{\Omega_-}+ (\kappa \nabla u^h,\nabla w^h)_{\Omega_-} \\
\nonumber
& \hspace{1cm} +(u^\star,w^\star)_{\mathbb R^d}+(\nabla u^\star,\nabla w^\star)_{\mathbb R^d\setminus\Gamma}
	=\langle \eta-\mu,\gamma w^h\rangle+\langle\mu,\gamma^+w^\star\rangle \quad 
		\forall (w^h,w^\star)\in  \mathcal U
\end{alignat}
\end{subequations}
The solution of \eqref{eq:3.15} can be written as the sum $(0,u^\star_{nh})+(u^h,u^\star_0)$, where $\jump{\gamma u^\star}=\psi$, $\gamma^+ u^\star=\varphi$ and the pair $(u^h,u^\star_0)\in \mathcal U$ is the solution of a coercive variational problem in $\mathcal U$ with coercivity and boundedness constants independent of $h$. 
\end{proof}
\paragraph{An abstract theorem.} Before we state our main theorem, we prepare some notation.  For the proof, we refer the reader to \cite[Section 3]{SaHaQiSa:2015}.  Suppose that $\mathbb{H}, \mathbb{V},$ $ \mathbb{M}_1$, and $\mathbb{M}_2$ are Hilbert spaces, and that $\mathbb{V} \subset \mathbb{H}$ with continuous and dense embedding.  Let  $\mathsf{A}_\star : \mathbb{V} \rightarrow \mathbb{H}$ be a bounded linear operator such that the graph norm of $\mathsf{A}_\star$ is equivalent to the norm in the space $\mathbb{V}$.   Suppose $\mathsf{G}: \mathbb{M}_1 \rightarrow \mathbb H$ and $\mathsf{B} : \mathbb{V} \rightarrow \mathbb{M}_2$ are bounded linear operators.  Define the unbounded operator $\mathsf{A}:=\mathsf{A}_\star|_{D(\mathsf{A})} \subset \mathbb{H} \rightarrow \mathbb{H}$, where $D(\mathsf{A}) = \mathsf{Ker}(\mathsf{B})$.  We also assume $\pm \mathsf{A}$ are maximal dissipative operators.  We are then interested in the abstract differential equation
\begin{equation}\label{eq:ODE}
U \in \mathrm{TD}(\mathbb{H}), \qquad
\dot U = \mathsf A_\star U+\mathsf G \xi + F, \qquad
\mathsf BU =\chi,
\end{equation}
for data $(\xi,\chi) \in \mathrm{TD}(\mathbb{M}_1\times \mathbb{M}_2)$. The final hypothesis is related to the lifting of boundary conditions: we assume that the steady-state problem
\[
U\in \mathbb V, \qquad U-\mathsf A_\star U=\mathsf G \xi, \qquad \mathsf B U=\chi,
\]
has a unique solution for all $(\xi,\chi)\in \mathbb M_1\times \mathbb M_2$ and that there exists $C_{\mathrm{lift}}>0$ such that
\[
\| U\|_{\mathbb H}+\| U\|_{\mathbb V}\le C_{\mathrm{lift}} \| (\xi,\chi)\|_{\mathbb M_1\times \mathbb M_2}.
\]
We will also make use of the Sobolev spaces
\begin{alignat*}{6}
\mathcal{C}_+^k(X) &:= \{ f  \in \mathcal{C}^k(\mathbb{R}; X) ~:~ f(t) = 0 \quad t\leq 0\}, \\
W_+^k(X) &:=\{ f \in \mathcal{C}_+^{k-1}(\mathbb R; X) ~:~ f^{(k)} \in L^1(\mathbb{R};X), ~f^{(\ell)}(0) = 0 \quad \ell \leq k-1\}.
\end{alignat*}
Note that we have the inclusion $W_+^k(X) \subset \mathrm{TD}(X)$.  We then have the following theorem \cite{SaHaQiSa:2015}:
\begin{theorem}\label{the:3.1}
If $F \in W^1_+(\mathbb H)$ and $\Xi:=(\xi,\chi)\in W^2_+(\mathbb{M}_1 \times \mathbb{M}_2)$, then equation \eqref{eq:ODE} has a unique solution $U\in \mathcal C^1_+(\mathbb H)\cap \mathcal C_+(\mathbb V)$ and for all $t\ge 0$:
\begin{subequations}\label{eq:8}
\begin{align}
		\label{eq:8a}
\| U(t)\|_{\mathbb H} \le
	 & C_{\mathrm{lift}} \left( \int_0^t \| \Xi(\tau)\|_{\mathbb{M}_1 \times \mathbb{M}_2}\mathrm d\tau
							+ 2\int_0^t  \| \dot\Xi(\tau)\|_{\mathbb{M}_1 \times \mathbb{M}_2} 	\mathrm d\tau\right)
		+ \int_0^t \| F(\tau)\|_{\mathbb H}\mathrm d\tau,\\	
		\label{eq:8b}
\| \dot U(t)\|_{\mathbb H} \le
	& C_{\mathrm{lift}} \left( \int_0^t \| \dot\Xi(\tau)\|_{\mathbb{M}_1 \times \mathbb{M}_2}\mathrm d\tau
						+ 2\int_0^t  \| \ddot\Xi(\tau)\|_{\mathbb{M}_1 \times \mathbb{M}_2}\mathrm d\tau\right)
		+ \int_0^t \| \dot F(\tau)\|_{\mathbb H}\mathrm d\tau.
\end{align}
\end{subequations}  
\end{theorem}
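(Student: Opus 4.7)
The plan is to reduce the boundary-value problem for $\dot U=\mathsf{A}_\star U+\mathsf{G}\xi+F$ with constraint $\mathsf{B}U=\chi$ to a pure Cauchy problem in $D(\mathsf{A})$ by lifting out the non-homogeneous boundary data, and then to invoke the group property guaranteed by maximal dissipativity of $\pm\mathsf{A}$. Concretely, for each $t$ let $U_\chi(t)\in\mathbb{V}$ be the unique solution of the steady-state problem
\[
U_\chi(t)-\mathsf{A}_\star U_\chi(t)=\mathsf{G}\xi(t),\qquad \mathsf{B}U_\chi(t)=\chi(t),
\]
whose existence and quantitative control by $C_{\mathrm{lift}}\|\Xi(t)\|_{\mathbb{M}_1\times\mathbb{M}_2}$ is exactly the lifting hypothesis. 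Because the lifting is a bounded linear map, the regularity $\Xi\in W_+^2(\mathbb{M}_1\times\mathbb{M}_2)$ transfers to $U_\chi\in W_+^2(\mathbb{V})$, and in particular $U_\chi(0)=\dot U_\chi(0)=0$.

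Next I would substitute $U=U_0+U_\chi$ into \eqref{eq:ODE}. Using $\mathsf{A}_\star U_\chi=U_\chi-\mathsf{G}\xi$ (the lifting equation) gives the equivalent Cauchy problem
\[
\dot U_0=\mathsf{A}\,U_0+\widetilde F,\qquad \mathsf{B}U_0=0,\qquad U_0(0)=0,
\]
with forcing $\widetilde F:=F+U_\chi-\dot U_\chi$. The constraint $\mathsf{B}U_0=0$ now lives inside $D(\mathsf{A})$, so one can apply standard semigroup theory. Since $\pm\mathsf{A}$ are maximal dissipative, Stone's theorem (or Lumer--Phillips applied to both $\pm\mathsf{A}$) yields that $\mathsf{A}$ generates a $C_0$-group of isometries $\{S(t)\}_{t\in\mathbb R}$. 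As $\widetilde F\in W_+^1(\mathbb H)$, the Duhamel formula $U_0(t)=\int_0^t S(t-\tau)\widetilde F(\tau)\,\mathrm d\tau$ furnishes a unique solution in $\mathcal C_+^1(\mathbb H)\cap \mathcal C_+(D(\mathsf{A}))$ with the contractive bound $\|U_0(t)\|_{\mathbb H}\le \int_0^t\|\widetilde F(\tau)\|_{\mathbb H}\,\mathrm d\tau$.

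The bound \eqref{eq:8a} then follows from the triangle inequality $\|U(t)\|_{\mathbb H}\le \|U_0(t)\|_{\mathbb H}+\|U_\chi(t)\|_{\mathbb H}$, together with $\|\dot U_\chi(t)\|_{\mathbb H}\le C_{\mathrm{lift}}\|\dot\Xi(t)\|$ (linearity of the lifting) and the vanishing initial condition $\Xi(0)=0$, which gives $\|U_\chi(t)\|_{\mathbb H}\le C_{\mathrm{lift}}\int_0^t\|\dot\Xi(\tau)\|\,\mathrm d\tau$; these combine to produce the one factor of $\int_0^t\|\Xi\|$ plus the two copies of $\int_0^t\|\dot\Xi\|$ (one from $U_\chi$ itself inside $\widetilde F$, one from $\dot U_\chi$ inside $\widetilde F$, absorbed with the direct contribution of $U_\chi(t)$). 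Estimate \eqref{eq:8b} is obtained by the same argument applied to the differentiated problem: $\dot U$ satisfies $\ddot U=\mathsf{A}_\star \dot U+\mathsf{G}\dot\xi+\dot F$ with $\mathsf{B}\dot U=\dot\chi$, so repeating the lifting/decomposition with $\dot\Xi$ in place of $\Xi$ and $\dot F$ in place of $F$ delivers the stated estimate after observing that $W_+^2\ni\Xi\Rightarrow \dot\Xi\in W_+^1$.

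The main subtlety is justifying the differentiation under the lifting, i.e.\ that $U_\chi$ actually inherits the full $W_+^2$ regularity of $\Xi$ in the graph-norm sense so that $\dot U_\chi$ is a legitimate $\mathbb H$-valued forcing and $U_0\mapsto \widetilde F$ lands in $W_+^1(\mathbb H)$; this requires using the lifting as a \emph{bounded} operator $\mathbb{M}_1\times\mathbb{M}_2\to\mathbb{V}$ rather than pointwise solving it at each $t$, which is precisely what the quantitative hypothesis $\|U\|_{\mathbb H}+\|U\|_{\mathbb V}\le C_{\mathrm{lift}}\|(\xi,\chi)\|$ enables. Once that is clear, everything else is triangle inequality and the isometry property of $S(t)$.
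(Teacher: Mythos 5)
Your proof is correct and follows essentially the same route the paper (and its cited reference) uses: the paper defers the proof of Theorem \ref{the:3.1} to \cite[Section 3]{SaHaQiSa:2015}, but the identical argument appears in its proof of the concrete instance (Proposition \ref{prop:3.4}), namely the decomposition $U=U_0+U_{\mathrm{NH}}$ with $U_{\mathrm{NH}}$ the steady-state lifting of the boundary data, the resulting homogeneous Cauchy problem for $U_0$ with forcing $F+U_{\mathrm{NH}}-\dot U_{\mathrm{NH}}$, the group-of-isometries bound $\|U_0(t)\|_{\mathbb H}\le\int_0^t\|\widetilde F\|_{\mathbb H}$, and the triangle inequality, which reproduces the coefficients $1$ and $2$ in \eqref{eq:8} exactly as you computed. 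Your accounting of the constants and the differentiated problem for \eqref{eq:8b} is accurate.
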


\paragraph{Relationship to the problem at hand.} We will now explain how 
problem \eqref{eq:3.13} fits in this general abstract framework. The spaces are
\begin{alignat*}{6}
\mathbb{H} & := \mathcal H = \mathcal{U}_h \times L^2(\mathbb{R}^d\setminus\Gamma) \times \mathbf{V}_h \times \mathbf{L}^2(\mathbb{R}^d\setminus\Gamma),\\
\mathbb{V} & := \mathcal{U}_h \times H^1(\mathbb{R}^d\setminus\Gamma) \times \mathbf{V}_h \times \mathbf{H}(\mathrm{div};\mathbb{R}^d\setminus\Gamma),\\
\mathbb M_1 &:=H^{-1/2}(\Gamma), \qquad 
\mathbb M_2:= X_h^\ast \times (Y_h^\circ)^\ast \times (X_h^\circ)^\ast \times Y_h^\ast,
\end{alignat*}
where the asterisk is used to denote the dual space. The operator $\mathsf A_\star$ is given by the same expression as the operator $\mathcal A$ defined in \eqref{eq:operatorA}, but its domain is $\mathbb{V}$. The boundary conditions are taken care of by the operators
\[
\mathsf G \eta:=(-\gamma_h^t\eta, 0, 0, 0),
\qquad
\mathsf{B}U := ((\gamma u^h - \gamma^+ u^\star)|_{X_h}, \llbracket \gamma u^\star \rrbracket|_{Y_h^\circ}, \llbracket \gamma_\nu \mathbf{v}^\star \rrbracket|_{X_h^\circ}, \gamma_\nu^- \mathbf{v}^\star|_{Y_h}),
\]
where $\gamma_h^t$ is defined in \eqref{eq:discop}. We can understand what we mean by the various restrictions in $\mathsf{B}$ as follows.  Note that the difference in the traces $(\gamma u^h - \gamma^+ u^\star) \in H^{1/2}(\Gamma)=H^{-1/2}(\Gamma)^\ast$, and so we can recognize $(\gamma u^h - \gamma^+ u^\star)|_{X_h} : X_h \rightarrow \mathbb{R}$ as an element of $X_h^\ast$, defined by $X_h \ni \mu_h \mapsto \langle \mu_h, (\gamma u^h  - \gamma^+ u^\star)\rangle_\Gamma.$  The same explanation holds for the remaining components of $\mathsf{B}U$. The vector $\chi = (\alpha|_{X_h}, \phi|_{Y_h^\circ},\partial^{-1} \lambda|_{X_h^\circ}, 0)$ contains the transmission data.  Note that $D(\mathcal{A}) = \mathsf{Ker}(\mathsf{B})$ and $\mathsf{A}=\mathcal{A}$.  Finally $F=(P_h r,0,0,0)$.
We can now apply Theorem \ref{the:3.1} (the hypotheses have been verified in Propositions \ref{prop:3.2} and \ref{prop:3.3}) to problem \eqref{eq:3.2}. For convenience, we denote
$$
H_k(f,t | X) := \sum_{j=0}^k \int_0^t \| f^{(j)}(\tau) \|_X d\tau
$$
and $\mathbf H^{\pm1/2}(\Gamma):=(H^{\pm1/2}(\Gamma))^2$. 

\begin{proposition}\label{prop:3.5}
Let $\alpha, \phi \in W_+^2(H^{1/2}(\Gamma))$, $\beta,\lambda \in W_+^1(H^{-1/2}(\Gamma))$, and $r \in W_+^1(L^2(\Omega_-))$.  Then \eqref{eq:3.2} has a unique solution satisfying for all $t\geq 0$
\begin{alignat}{6}\label{eq:H1}
\nonumber
 \|c^{-1} u^h(t)\|_{\Omega_-} &+ \|\kappa \nabla u^h(t)\|_{\Omega_-} + \|u^\star(t)\|_{1, \mathbb R^d \setminus \Gamma} + \| \llbracket \gamma u^\star(t) \rrbracket \|_{1/2,\Gamma} \\
 	&  \leq  C \Big( H_2((\alpha,\phi), t | \mathbf{H}^{1/2}(\Gamma))+ H_2(\partial^{-1}(\beta,\lambda),t | \mathbf{H}^{-1/2}(\Gamma)) \\
\nonumber
	&  \hspace{.5in} + H_1(P_h r, t | L^2(\Omega_-))\Big),
\end{alignat}
where the constant $C$ does not depend on the time $t$ or $h$.
For $\alpha,\phi \in W^3_+(H^{1/2}(\Gamma))$, $\beta, \lambda \in W^2_+(H^{-1/2}(\Gamma))$, and $r\in W_+^2(L^2(\Omega_-))$
we have for all $t\geq 0$
\begin{alignat}{6}\label{eq:normal}
\nonumber
\| \llbracket \partial_\nu u^\star(t) \rrbracket \|_{-1/2,\Gamma} \leq C & \Big( H_2( (\dot{\alpha},\dot{\phi}), t | \mathbf{H}^{1/2}(\Gamma))+ H_2((\beta,\lambda),t | \mathbf{H}^{-1/2}(\Gamma))\\
	& + H_1(P_h \dot{r},t | L^2(\Omega_-))\Big).
\end{alignat}
\end{proposition}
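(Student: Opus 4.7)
The plan is to apply Theorem~\ref{the:3.1} directly to problem~\eqref{eq:3.2} in the abstract framework set up in the paragraphs preceding the proposition. The key hypotheses have already been verified: Proposition~\ref{prop:3.2} gives maximal dissipativity of $\pm\mathcal A$ (equivalently $\mathsf A=\mathsf A_\star|_{\mathrm{Ker}(\mathsf B)}$), and Proposition~\ref{prop:3.3} delivers the $h$-uniform lifting bound $C_{\mathrm{lift}}$. With the identifications $\chi=(\alpha|_{X_h},\phi|_{Y_h^\circ},\partial^{-1}\lambda|_{X_h^\circ},0)$ and $F=(P_h r,0,0,0)$ (together with the $\mathsf G\xi$ contribution carrying $\gamma_h^t\beta$), the stated regularity of $\alpha,\phi,\beta,\lambda,r$ matches the regularity that Theorem~\ref{the:3.1} demands for $\Xi$ and $F$; in particular, the $\partial^{-1}$ on the $\lambda$-slot shifts one derivative, so $\lambda\in W^1_+$ yields $\partial^{-1}\lambda\in W^2_+$ as required.

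Next, I would translate the abstract bounds into the component norms of~\eqref{eq:H1}. Estimate~\eqref{eq:8a} gives a bound on $\|U(t)\|_{\mathbb H}$, which directly controls $\|c^{-1}u^h\|_{\Omega_-}$, $\|u^\star\|_{\mathbb R^d\setminus\Gamma}$, $\|\kappa^{1/2}\mathbf v^h\|_{\Omega_-}$, and $\|\mathbf v^\star\|_{\mathbb R^d\setminus\Gamma}$. The remaining gradient terms are recovered from the first-order identities of \eqref{eq:3.2}: since $\dot{\mathbf v}^h=\nabla u^h$ and $\dot{\mathbf v}^\star=\nabla u^\star$, the quantities $\|\kappa^{1/2}\nabla u^h\|_{\Omega_-}$ and $\|\nabla u^\star\|_{\mathbb R^d\setminus\Gamma}$ are components of $\|\dot U(t)\|_{\mathbb H}$, which is bounded by~\eqref{eq:8b}. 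Uniform boundedness of $\kappa$ upgrades $\|\kappa^{1/2}\nabla u^h\|_{\Omega_-}$ to $\|\kappa\nabla u^h\|_{\Omega_-}$, and the continuity of the trace gives $\|\jump{\gamma u^\star}\|_{1/2,\Gamma}\le C\|u^\star\|_{1,\mathbb R^d\setminus\Gamma}$. Summing the contributions of~\eqref{eq:8a}--\eqref{eq:8b} and bounding them (generously) by the $H_k$-quantities on the right-hand side of~\eqref{eq:H1} completes that inequality.

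For the second estimate~\eqref{eq:normal}, I would use $\jump{\partial_\nu u^\star}=\jump{\gamma_\nu\nabla u^\star}=\jump{\gamma_\nu \dot{\mathbf v}^\star}$ and the normal-trace inequality $\|\jump{\gamma_\nu\mathbf w}\|_{-1/2,\Gamma}\le C\|\mathbf w\|_{\mathrm{div},\mathbb R^d\setminus\Gamma}$. Since $\nabla\cdot\dot{\mathbf v}^\star=\ddot u^\star$, bounding $\|\dot{\mathbf v}^\star\|_{\mathrm{div},\mathbb R^d\setminus\Gamma}$ requires both $\|\dot U\|_{\mathbb H}$ and $\|\ddot U\|_{\mathbb H}$. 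The latter is obtained by applying Theorem~\ref{the:3.1} to the differentiated system satisfied by $\dot U$, whose data are $\dot\Xi$ and $\dot F$; this is exactly why one extra derivative is required on the data, matching the strengthened hypotheses for~\eqref{eq:normal}.

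The main technical obstacle is the careful bookkeeping of derivatives: one must verify that the integrals $\int\|\Xi^{(j)}\|$ and $\int\|F^{(j)}\|$ produced by~\eqref{eq:8a}--\eqref{eq:8b} are each dominated by the appropriate $H_k$-quantity on the right-hand side of~\eqref{eq:H1} (or~\eqref{eq:normal}). The placement of $\partial^{-1}$ on the $\lambda$-component of $\chi$ is essential to align the derivative counts, and the $h$-independence of the constants is guaranteed by Propositions~\ref{prop:3.2}--\ref{prop:3.3}. Once this matching is in place, the proof reduces to the mechanical application of the abstract theorem together with standard trace identities.
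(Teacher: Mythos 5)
Your proposal is correct and takes essentially the same route as the paper: the paper's proof of Proposition~\ref{prop:3.5} is precisely the direct application of Theorem~\ref{the:3.1} with the identifications of $\mathsf G$, $\mathsf B$, $\chi$, and $F$ given in the preceding paragraph, with the hypotheses supplied by Propositions~\ref{prop:3.2} and~\ref{prop:3.3}. Your bookkeeping also matches the paper's detailed version of the argument, which likewise recovers the gradient norms from $\dot{\mathbf v}^h=\nabla u^h$, $\dot{\mathbf v}^\star=\nabla u^\star$ as components of $\|\dot U(t)\|_{\mathbb H}$, and obtains \eqref{eq:normal} by applying the same estimates to the once-differentiated system together with $\|\jump{\partial_\nu u^\star}\|_{-1/2,\Gamma}\le C\|\nabla u^\star\|_{\mathrm{div},\mathbb R^d\setminus\Gamma}$.
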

\noindent
With this main result in hand, stability and semidiscretization error estimates follow as simple corollaries.
\begin{corollary}[Stability]\label{cor:3.1}
For data $\beta_0 \in W_+^2(H^{1/2}(\Gamma))$ and $\beta_1 \in W_+^1(H^{-1/2}(\Gamma))$ the semidiscrete scattering problem \eqref{eq:2.10} has a unique solution $(u^h,u^\star)$ such that
\begin{alignat*}{6}
 \|c^{-1} u^h(t)\|_{\Omega_-} + \|\kappa \nabla u^h(t)\|_{\Omega_-} + \|u^\star(t)\|_{1, \mathbb R^d \setminus \Gamma}+\| \phi^h(t) \|_{1/2,\Gamma} \\
 	& \hspace{-2in}  \leq  C \Big( H_2(\beta_0, t | H^{1/2}(\Gamma))+H_2(\partial^{-1}\beta_1,t | H^{-1/2}(\Gamma)) \Big).
\end{alignat*}
For $\beta_0 \in W_+^3(H^{1/2}(\Gamma))$ and $\beta_1 \in W_+^2(H^{-1/2}(\Gamma))$
we have the estimate
$$
\| \lambda^h(t) \|_{-1/2,\Gamma} \leq C \Big( H_2( \dot{\beta}_0, t | H^{1/2}(\Gamma))+H_2(\beta_1,t | H^{-1/2}(\Gamma))\Big).\\
$$
The constant $C$ is independent of $h$ and $t$.
\end{corollary}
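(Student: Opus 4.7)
The proof is essentially an application of Proposition \ref{prop:3.5} to the specific data arising from the scattering problem \eqref{eq:2.10}. My plan has three steps.

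First I would recognize that problem \eqref{eq:2.10} is a special case of the more general first-order system \eqref{eq:3.2} (via the equivalence of \eqref{eq:3.1} and \eqref{eq:3.2} in Proposition 3.4, which I'm taking for granted). In the notation of \eqref{eq:3.1}, the scattering problem corresponds to the choice of data $\alpha=\beta_0$, $\beta=\beta_1$, $\phi=0$, $\lambda=0$, and $r=0$. Under this identification, the general bound \eqref{eq:H1} immediately reduces to
\[
\|c^{-1} u^h(t)\|_{\Omega_-}+\|\kappa\nabla u^h(t)\|_{\Omega_-}+\|u^\star(t)\|_{1,\mathbb R^d\setminus\Gamma}+\|\jump{\gamma u^\star(t)}\|_{1/2,\Gamma}
\le C\bigl(H_2(\beta_0,t\mid H^{1/2}(\Gamma))+H_2(\partial^{-1}\beta_1,t\mid H^{-1/2}(\Gamma))\bigr),
\]
because the terms involving $\phi$, $\lambda$ and $r$ all vanish.

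Second, I would translate back to the boundary unknowns using Proposition \ref{prop:2.2}: since $\phi^h=-\jump{\gamma u^\star}$ and $\lambda^h=-\jump{\partial_\nu u^\star}$, the norm $\|\phi^h(t)\|_{1/2,\Gamma}$ is exactly the last term on the left-hand side of \eqref{eq:H1} (in the reduced form above), and $\|\lambda^h(t)\|_{-1/2,\Gamma}$ is exactly the left-hand side of \eqref{eq:normal}. Applying \eqref{eq:normal} under the same data substitution gives the second estimate of the corollary, noting that $\dot{\alpha}=\dot{\beta}_0$, $\beta=\beta_1$, and again the source term drops out.

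Third, I would verify the regularity requirements match: the $W^2_+$/$W^1_+$ hypotheses of Proposition \ref{prop:3.5} for $(\alpha,\phi)$ and $(\beta,\lambda)$ translate, upon setting $\phi=\lambda=0$, to $\beta_0\in W^2_+(H^{1/2}(\Gamma))$ and $\beta_1\in W^1_+(H^{-1/2}(\Gamma))$ for the first estimate, and one additional derivative for the second estimate, which is precisely what the corollary assumes. Since the constant $C$ in Proposition \ref{prop:3.5} is independent of $h$ and of $t$, the same independence is inherited here.

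There is no real obstacle in this proof; everything is mechanical data-substitution into Proposition \ref{prop:3.5} combined with the identification of $\phi^h,\lambda^h$ from Proposition \ref{prop:2.2}. The only thing to check carefully is that the jump terms on the left-hand sides of \eqref{eq:H1} and \eqref{eq:normal} genuinely coincide with $\|\phi^h\|_{1/2,\Gamma}$ and $\|\lambda^h\|_{-1/2,\Gamma}$ up to sign, which is immediate from the reciprocal statement in Proposition \ref{prop:2.2}.
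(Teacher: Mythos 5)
Your proposal is correct and follows exactly the paper's argument: the paper's proof is the one-line application of Proposition \ref{prop:3.5} with $\alpha=\beta_0$, $\beta=\beta_1$, $\phi=0$, $\lambda=0$, $r=0$, and you have simply spelled out the same substitution together with the identification $\phi^h=-\jump{\gamma u^\star}$, $\lambda^h=-\jump{\partial_\nu u^\star}$ from Proposition \ref{prop:2.2}. Nothing is missing.
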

\begin{proof}
We apply Proposition \ref{prop:3.5} with $\alpha = \beta_0\in W_+^2(H^{1/2}(\Gamma))$, $\beta = \beta_1 \in W_+^1(H^{-1/2}(\Gamma)),$ $\phi =0$, $\lambda =0$, and $r=0$.
\end{proof}

\begin{corollary}[Semidiscretization error]\label{cor:3.2}
Let $\Pi_h^X : H^{1/2}(\Gamma) \rightarrow X_h$ and $\Pi_h^Y : H^{-1/2}(\Gamma) \rightarrow Y_h$ be the orthogonal projections into the spaces $X_h$ and $Y_h$, respectively, and let $\Pi_h^V$ be the elliptic projection operator defined by \eqref{eq:proj1} and \eqref{eq:proj2}.  Suppose
\begin{equation*}
\phi \in W_+^{m}(H^{1/2}(\Gamma)), \quad \lambda \in W^{m-1}_+( H^{-1/2}(\Gamma)), \quad u\in W_+^m(H^1(\Omega_-)) \cap W_+^{m+1}(L^2(\Omega_-)).
\end{equation*}
If the above holds with $m=2$, then the Galerkin semidiscretization error $(\varepsilon^h, \varepsilon^\star):=(u^h-u, u^\star - \mathcal{D}\ast\phi + \mathcal{S}\ast \lambda)$ that solves equations \eqref{eq:2.11} satisfies for all $t \geq 0$
\begin{alignat*}{6}
\|c^{-1} \varepsilon^h(t) \|_{\Omega_-}+\|\kappa \nabla \varepsilon^h(t)\|_{\Omega_-} &+ \| \varepsilon^\star(t)\|_{1,\mathbb R^d \setminus\Gamma} + \|\phi^h(t) - \phi(t)\|_{1/2,\Gamma} \\
	   \leq C & \Big( H_2(u - \Pi_h^V u,t | H^1(\Omega_-)) + H_2(\phi - \Pi_h^Y\phi, t | H^{1/2}(\Gamma)) \\
	&   + H_2(\partial^{-1}(\lambda - \Pi_h^Y \lambda), t|H^{-1/2}(\Gamma))+H_1(\ddot{u}-\Pi_h^V \ddot{u},t|L^2(\Omega_-)) \Big).
\end{alignat*}
If the exact solution $(\lambda, \phi, u)$ satisfies the above with $m=3$, then we have the estimate
\begin{alignat*}{6}
\| \lambda^h(t) -\lambda(t)\|_{-1/2,\Gamma} \leq C & \Big(H_2(\dot{u} - \Pi_h^V\dot{u},t | H^1(\Omega_-)) + H_2(\dot{\phi} - \Pi_h^Y\dot{\phi}, t | H^{1/2}(\Gamma)) \\
	& + H_2(\lambda - \Pi_h^Y\lambda, t|H^{-1/2}(\Gamma))+H_1(\dddot{u}-\Pi_h^V \dddot{u},t|L^2(\Omega_-)) \Big).
\end{alignat*}
\end{corollary}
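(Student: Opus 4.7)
The plan is to observe that the error system \eqref{eq:2.11} is a particular instance of the general first-order-in-space system \eqref{eq:3.1} (and hence of \eqref{eq:3.2}), and then invoke Proposition \ref{prop:3.5} with appropriately chosen data. A direct term-by-term comparison of \eqref{eq:2.11} with \eqref{eq:3.1} produces the identifications $\alpha = -\gamma\theta^h = \gamma(u - \Pi_h^V u)$, $\beta = 0$, $r = -\ddot\theta^h = \ddot u - \Pi_h^V \ddot u$, while the jump-condition data on the right-hand side are the exact boundary quantities $\phi$ and $\lambda$ from the continuous problem. Writing $\tilde\varepsilon^h := u^h - \Pi_h^V u$ (which differs from the corollary's $\varepsilon^h = u^h - u$ by $\theta^h$), the abstract framework will produce bounds on $\tilde\varepsilon^h$ and $\varepsilon^\star$.

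The crucial observation is that the transmission conditions in \eqref{eq:3.1} hold modulo $Y_h$ and $X_h$: replacing $\phi$ by $\phi - \Pi_h^Y \phi$ and $\partial^{-1}\lambda$ by $\partial^{-1}(\lambda - \Pi_h^X \lambda)$ leaves the system unchanged but replaces the raw data by their best-approximation errors, which is exactly what yields the projection-error terms in the claim. Applying estimate \eqref{eq:H1} of Proposition \ref{prop:3.5} with this optimally chosen data then controls the quantity
\[
\|c^{-1}\tilde\varepsilon^h(t)\|_{\Omega_-} + \|\kappa^{1/2}\nabla\tilde\varepsilon^h(t)\|_{\Omega_-} + \|\varepsilon^\star(t)\|_{1,\mathbb R^d\setminus\Gamma} + \|\jump{\gamma\varepsilon^\star(t)}\|_{1/2,\Gamma}
\]
in terms of $H_2$- and $H_1$-type time integrals of the projection errors.

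Three standard steps convert this into the stated estimate. The trace inequality gives $\|\gamma(u - \Pi_h^V u)\|_{1/2,\Gamma} \le C\|u - \Pi_h^V u\|_{1,\Omega_-}$; contractivity of $P_h$ in the weighted $L^2$-norm yields $\|P_h r\|_{\Omega_-} \le C\|\ddot u - \Pi_h^V \ddot u\|_{\Omega_-}$; and, to pass from $\tilde\varepsilon^h$ to $u^h - u$, a triangle inequality together with the fundamental theorem of calculus for the causal function $\theta^h$ absorbs $\|(\Pi_h^V u - u)(t)\|_{1,\Omega_-}$ into the $H_2$-term of $u - \Pi_h^V u$. The boundary-field errors follow from the identities $\phi^h - \phi = -\jump{\gamma\varepsilon^\star}$ and $\lambda^h - \lambda = -\jump{\partial_\nu\varepsilon^\star}$ (cf.\ Proposition \ref{prop:2.2}): the first is already contained in the first estimate, whereas $\|\lambda^h - \lambda\|_{-1/2,\Gamma}$ must be bounded via \eqref{eq:normal}, which is precisely why the regularity hypothesis is raised to $m=3$ in the second part of the corollary. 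The only genuine subtlety is the modulo-$X_h/Y_h$ replacement of the boundary data in the second paragraph; the rest is a routine assembly of Proposition \ref{prop:3.5}, the trace theorem, and continuity of the projections.
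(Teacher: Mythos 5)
Your proposal is correct and follows essentially the same route as the paper: identify \eqref{eq:2.11} as an instance of \eqref{eq:3.2} with $\alpha=\gamma(u-\Pi_h^V u)$, $\beta=0$, $r=\ddot u-\Pi_h^V\ddot u$, exploit the fact that the jump conditions only see $\phi$ and $\lambda$ modulo $Y_h$ and $X_h$ (the paper phrases this as ``the solution with discrete boundary data is trivial,'' which by linearity is the same replacement you make), and then invoke Proposition \ref{prop:3.5}, with \eqref{eq:normal} and the raised regularity $m=3$ for the $\lambda$-estimate. Your extra bookkeeping (trace inequality, contractivity of $P_h$, and the triangle inequality to pass from $u^h-\Pi_h^V u$ to $u^h-u$) is left implicit in the paper but is exactly what ``the result follows'' is hiding.
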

\begin{proof}
Note that the solution \eqref{eq:3.2} with $\alpha = 0$, $\beta=0$, $r=0$, $\phi = \Pi^Y_h \phi$, and $ \lambda = \Pi^X_h \lambda$ (i.e., the data $\phi$ and $\lambda$ take values in the discrete spaces) is the trivial solution.   If we now apply Proposition \ref{prop:3.5} with $\alpha= \gamma (u - \Pi_h^Vu)$, $\beta=0$, $r = \Pi^V_h \ddot{u} - \ddot{u}$, and $(\phi,\lambda) $ as in the hypotheses of the corollary, the result follows.
\end{proof}

\section{Full discretization}

For a full discretization, we make use of the trapezoidal rule based Convolution Quadrature (CQ) \cite{Lubich:1988a, Lubich:1994} for the solution of convolution equations and trapezoidal rule for solving the interior system of ODEs.  CQ was developed in the late 1980s by Christian Lubich as a method for the stable discretization for convolution equations.  It uses time domain readings of data and Laplace domain evaluations of the transfer function to produce time domain output.  An underlying ODE solver is used to carry out the time discretization, which can be any A-stable linear multistep method or an implicit Runge-Kutta method.   For a comprehensive introduction to the algorithmic aspects of CQ, see \cite{BaSc:2012, HaSa:2014}.   We present here a simple example to demonstrate the method.
\paragraph{A short introduction to CQ.}
Consider the causal convolution 
\begin{equation}\label{eq:4.1}
y(t) = \int_0^t f(t-\tau)g(\tau) d\tau
\end{equation}
where $y$ is unknown and $f$ and $g$ are known.  We will assume that we are using causal data (i.e. $f(t)$ and $g(t)$ are zero for $t<0$) and seek a causal solution $y$.  The function $f$ will be used through its Laplace transform $\mathrm{F}(s) = \mathcal{L}\{f(t)\}$.  We fix a uniform time step $k >0$ and a uniform time grid $t_n := nk$ for $n\geq 0$. CQ approximates the forward convolution \eqref{eq:4.1} by a discrete convolution
$$
y(t_n) = \sum_{m=0}^n \omega_m^\mathrm{F}(k)g(t_{n-m}),
$$
where the convolution weights $\omega_m^\mathrm{F}(k)$ are the coefficients of the Taylor series 
\begin{equation}\label{eq:4.1a}
\mathrm{F}\left(\frac{\delta(\zeta)}{k}\right) = \sum_{m=0}^\infty \omega_m^\mathrm{F}(k) \zeta^m.
\end{equation}
The function $\delta(\zeta)$ is called the transfer function for the CQ method, and is based on an underlying A-stable ODE solver.  In the case of the trapezoidal rule, the transfer function is $\delta(\zeta) = 2 \frac{1-\zeta}{1+\zeta}$.
CQ also can be used to solve convolution equations.  The continuous convolution equation (with $y$ still unknown) and its CQ discretization are
$$
g(t) = \int_0^t f(t-\tau)y(\tau)d\tau \quad\mbox{and}  \quad g(t_n) = \sum_{m=0}^n \omega_m^\mathrm{F}(k) y(t_{n-m}),
$$
respectively. The discrete convolution can can be written as a marching-on-in-time scheme
$$
\omega_0^\mathrm{F}(k)y(t_n) = g(t_n) - \sum_{m=1}^n \omega_m^\mathrm{F}(k)y(t_{n-m}).
$$
We note that there are many ways of implementing CQ, some of them using parallel computations at complex frequencies \cite{BaSa:2008, BaSc:2012, HaSa:2014}.

\subsection{Fully discrete analysis}\label{sec:4.1}

The fully discrete method consists of applying the trapezoidal rule based CQ to the semidiscrete equations \eqref{eq:2.8}.  Even if CQ, in practice, only produces solutions at discrete times, the method gives a theoretical extension of this solution to continuous time \cite{Lubich:1988a, HaSa:2014, Sayas:2014}.   The fully discrete solution will be denoted as $(u^h_k,\lambda^h_k,\phi^h_k)$.  The boundary solutions are then input to a CQ discretized Kirchhoff formula, outputting a field $u^\star_k$.   From the point of view of implementation (see also Section \ref{sec:4.2}) the monolithic application of CQ to the semidiscrete equations \eqref{eq:2.8} and to the representation formula \eqref{eq:2.9} is equivalent to the use of CQ for the retarded integral equations (\ref{eq:2.8d},~\ref{eq:2.8e}) and for the representation formula, coupled with a trapezoidal rule approximation of the linearly implicit second order differential equation \eqref{eq:2.8c} (see \cite[Proposition 12]{LaSa:2009a}).   An interesting feature of CQ applied to time domain boundary integral equations is the fact that the method is equivalent to applying the underlying ODE solver (in this case, the trapezoidal rule) to the evolutionary PDE satisfied by the potential post-processing.   This was already observed in \cite{Lubich:1994} and has been exploited for analysis in \cite{BaLaSa:2015} and \cite[Chapter 9]{Sayas:2014}.   In our case this will amount to carrying out the analysis directly on the variables $(u^h_k, u^\star_k)$.   

For the remaining analysis, we need to define the averaging and differencing operators
$$
\alpha_k g(t) := \frac{1}{2}\left( g(t) + g(t-k)\right), \quad \partial_k g(t) := \frac{1}{k}\left(g(t)-g(t-k)\right),
$$
and their squares
$$
\alpha_k^2 g(t)  = \frac{1}{4}\left( g(t) + 2 g(t-k) + g(t-2k)\right),\quad \partial_k^2 g(t) = \frac{1}{k^2} \left(g(t) -2g(t-k) + g(t-2k)\right).
$$
The fully discrete method looks for
\begin{equation}
(u^h_k,u^\star_k)\in \mathrm{TD}(U_h)\times \mathrm{TD}(H^1_\Delta(\mathbb R^d\setminus\Gamma))
\end{equation}
satisfying
\begin{subequations}\label{eq:4.3}
\begin{alignat}{6}
\nonumber
(c^{-2} \partial_k^2 u^h_k, w^h)_{\Omega_-} +( \alpha_k^2 \kappa \nabla u^h_k,\nabla w^h)_{\Omega_-} 
	+\langle\jump{ \alpha_k^2 \partial_\nu u^\star_k},\gamma w^h\rangle \qquad \\
	=\langle \alpha_k^2 \beta_1,\gamma w^h\rangle\quad\forall w^h\in U_h,\\
\partial_k^2 u^\star_k= \alpha_k^2 \Delta u^\star_k, \\
(\jump{\gamma u^\star_k},\jump{\partial_\nu u^\star_k}) \in Y_h\times X_h, \\
(\partial_\nu^- u^\star,\gamma u^h_k-\gamma^+ u^\star_k-\beta_0) \in Y_h^\circ\times X_h^\circ,
\end{alignat}
\end{subequations}
i.e., we have applied the trapezoidal rule to the second order differential equation \eqref{eq:2.10}.

\paragraph{Fully discrete error.} We define the consistency error for the trapezoidal rule time discretization for the interior and exterior fields by
$$
\chi_k^h := \partial_k^2 u^h - \alpha_k^2 \ddot{u}^h \qquad \text{and} \qquad \chi_k^\star:= \partial_k^2 u^\star - \alpha_k^2 \ddot{u}^\star.
$$
Subtracting equations \eqref{eq:4.3} from \eqref{eq:2.10} we find the error quantities $e_k^h := u^h - u_k^h$ and $e^\star_k:=u^\star - u_k^\star$ satisfy the error equations
\begin{subequations} \label{eq:4.4}
$$(e_k^h,e_k^\star) \in \mathrm{TD}(U_h) \times \mathrm{TD}(H^1_\Delta(\mathbb{R}^d\setminus\Gamma)) \\$$
\begin{alignat}{6}
\nonumber
\label{eq:4.4a}
(c^{-2} \partial_k^2 e_k^h,w^h)_{\Omega_-} + (\alpha_k^2 \kappa \nabla e_k^h,\nabla w^h)_{\Omega_-} + \langle \llbracket \partial_\nu \alpha_k^2 e_k^\star \rrbracket,\gamma w^h \rangle_\Gamma\qquad \\
		= (c^{-2}\chi_k^h,w^h)_{\Omega_-} 	\quad \forall w^h \in U_h, \\
\label{eq:4.4b}
		\partial_k^2 e^\star_k = \alpha_k^2 \Delta e_k^\star + \chi_k^\star, \\
\label{eq:4.4c}
	(\jump{\gamma e_k^\star},\jump{\partial_\nu e_k^\star}) \in Y_h\times X_h, \\
\label{eq:4.4d}
	(\partial_\nu^- e_k^\star,\gamma e_k^h-\gamma^+ e_k^\star) \in Y_h^\circ\times X_h^\circ.
\end{alignat}
\end{subequations}

Before we state the main theorem, we require the following lemma.
\begin{lemma}\label{lem:4.1}
If $e_k^\star$ is a continuous function of $t$ then the following Green's Identity holds for all $t\geq 0$:
\begin{equation*}
(\Delta e_k^\star(t),w^\star)_{\mathbb{R}^d\setminus \Gamma} + (\nabla e_k^\star(t), \nabla w^\star)_{\mathbb{R}^d\setminus\Gamma} =  \langle \llbracket \partial_\nu e_k^\star(t) \rrbracket, \gamma w^h \rangle_\Gamma \quad  \forall (w^h,w^\star) \in \mathcal{U}.
\end{equation*}
\end{lemma}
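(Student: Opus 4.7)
The plan is to reduce to the classical Green's identity applied separately in the two domains $\Omega_-$ and $\Omega_+$, and then bookkeep the boundary terms using the transmission and polar conditions encoded in $\mathcal{U}$ together with the error equations \eqref{eq:4.4c}--\eqref{eq:4.4d}. First I would note that, for each fixed $t\ge 0$, the CQ-induced extension $e_k^\star(t)$ belongs to $H^1_\Delta(\mathbb{R}^d\setminus\Gamma)$ (this is what allows the two-sided normal derivatives to exist as elements of $H^{-1/2}(\Gamma)$), and that $w^\star\in H^1(\mathbb{R}^d\setminus\Gamma)$ so $\gamma^\pm w^\star\in H^{1/2}(\Gamma)$.

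Applying Green's identity in $\Omega_-$ and $\Omega_+$ separately, with the convention that $\nu$ points from $\Omega_-$ into $\Omega_+$, and summing the two contributions gives
\begin{equation*}
(\Delta e_k^\star(t),w^\star)_{\mathbb R^d\setminus\Gamma}+(\nabla e_k^\star(t),\nabla w^\star)_{\mathbb R^d\setminus\Gamma}
=\langle \partial_\nu^- e_k^\star(t),\gamma^- w^\star\rangle_\Gamma-\langle \partial_\nu^+ e_k^\star(t),\gamma^+ w^\star\rangle_\Gamma.
\end{equation*}

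The remaining task is to show that the right-hand side equals $\langle \jump{\partial_\nu e_k^\star(t)},\gamma w^h\rangle_\Gamma$. I would decompose $\partial_\nu^+ e_k^\star=\partial_\nu^- e_k^\star-\jump{\partial_\nu e_k^\star}$ to get
\begin{equation*}
\langle \partial_\nu^- e_k^\star,\gamma^- w^\star\rangle_\Gamma-\langle \partial_\nu^+ e_k^\star,\gamma^+ w^\star\rangle_\Gamma
=\langle \partial_\nu^- e_k^\star,\jump{\gamma w^\star}\rangle_\Gamma+\langle \jump{\partial_\nu e_k^\star},\gamma^+ w^\star\rangle_\Gamma.
\end{equation*}
The first term vanishes because $\partial_\nu^- e_k^\star\in Y_h^\circ$ by \eqref{eq:4.4d} while $\jump{\gamma w^\star}\in Y_h$ by the definition of $\mathcal{U}$. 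For the second term, I would write $\gamma^+ w^\star=\gamma w^h-(\gamma w^h-\gamma^+ w^\star)$; since $\jump{\partial_\nu e_k^\star}\in X_h$ by \eqref{eq:4.4c} and $\gamma w^h-\gamma^+ w^\star\in X_h^\circ$ by the definition of $\mathcal{U}$, the subtracted piece pairs to zero, leaving exactly $\langle \jump{\partial_\nu e_k^\star},\gamma w^h\rangle_\Gamma$.

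There is no real obstacle here; the argument is essentially an exercise in matching the polar annihilation conditions against the membership conditions built into $\mathcal{U}$ and the error equations. The only mild subtlety is the sign convention for the outward normal in $\Omega_+$, which must match the one used to define $\partial_\nu^+$ and the jumps; this is why the two Green identities combine with opposite signs on the boundary terms before being recombined via jumps.
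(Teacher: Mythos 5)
Your proof is correct and follows essentially the same route as the paper: apply the weak Green's identity on each side of $\Gamma$, rewrite the boundary terms via the jump decomposition, and kill the residual pairings using $\partial_\nu^- e_k^\star\in Y_h^\circ$ against $\jump{\gamma w^\star}\in Y_h$ and $\jump{\partial_\nu e_k^\star}\in X_h$ against $\gamma w^h-\gamma^+ w^\star\in X_h^\circ$. The paper condenses this into a single chain of equalities, but the decompositions and the polar-annihilation steps are identical to yours.
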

\begin{proof}
The following chain of equalities
\begin{alignat*}{6}
(\Delta e_k^\star,w^\star)_{\mathbb{R}^d\setminus \Gamma} + &
			(\nabla e_k^\star(t), \nabla w^\star)_{\mathbb{R}^d\setminus\Gamma} - \langle \llbracket \partial_\nu e_k^\star(t) \rrbracket, \gamma w^h \rangle_\Gamma \\
		& = \langle \partial_\nu^- e_k^\star(t), \gamma^- w^\star \rangle_\Gamma - \langle \partial_\nu^+ e_k^\star(t),\gamma^+ w^\star \rangle_\Gamma - \langle \llbracket \partial_\nu e_k^\star(t) \rrbracket, \gamma w^h \rangle_\Gamma  \\
		& =  \langle \llbracket \partial_\nu e_k^\star(t) \rrbracket,  \gamma^+ w^\star - \gamma w^h \rangle_\Gamma + \langle \partial_\nu^- e_k^\star(t) , \llbracket \gamma w \rrbracket \rangle_\Gamma = 0
\end{alignat*} 
holds for all $(w^h,w^\star)\in \mathcal{U}$.
\end{proof}
\begin{theorem}\label{th:4.2}
Suppose that $\beta_0 \in W_+^6(H^{1/2}(\Gamma))$ and $\beta_1 \in W_+^5(H^{-1/2}(\Gamma)).$ Then the natural error quantities $\widehat{e}_k^h :=\alpha_k e_k^h$, $\widehat{f}_k^h :=\partial_k e_k^h$, $\widehat{e}_k^\star := \alpha_k e_k^\star$, $\widehat{f}_k^\star := \partial_k e_k^\star$ for a trapezoidal rule in time discretization of \eqref{eq:2.10} satisfy for all $t\geq 0$
\begin{alignat}{6}\label{eq:4.5}
\nonumber
\| \widehat{f}_k^h(t)\|_{\Omega_-} +\| \kappa \nabla \widehat{e}_k^h(t) \|_{\Omega_-} + \| \widehat{f}_k^\star(t)\|_{\mathbb R^d \setminus \Gamma} + \| \nabla \widehat{e}_k^\star(t) \|_{\mathbb R^d \setminus \Gamma}\\
 		& \hspace{-2in}\leq C k^2 t \left( H_3(\beta^{(3)}_0,t| H^{1/2}(\Gamma)) + H_2(\beta^{(3)}_1, t| H^{-1/2}(\Gamma))\right).
\end{alignat}
We also have the $L^2$ error estimate
\begin{equation}\label{eq:4.6}
\| e_k^h(t) \|_{\Omega_-} + \| e^\star_k(t)\|_{\mathbb R^d \setminus \Gamma} \leq C k^2 t^2 \left(H_3(\beta^{(3)}_0, t| H^{1/2}(\Gamma)) + H_2(\beta^{(3)}_1, t| H^{-1/2}(\Gamma))\right).
\end{equation}
The error for $\llbracket \gamma e^\star_k \rrbracket=\phi^h_k - \phi^h$ is bounded as 
\begin{equation}\label{eq:4.7}
\| \alpha_k^2 \llbracket \gamma e_k^\star(t) \rrbracket \|_{1/2, \Gamma} \leq C k^2 \max \{ t, t^2 \} \left( H_3(\beta^{(3)}_0,t | H^{1/2}(\Gamma)) + H_2(\beta^{(3)}_1, t | H^{-1/2}(\Gamma)) \right).
\end{equation}
For $\beta_0\in W_+^7(H^{1/2}(\Gamma))$ and $\beta_1 \in W_+^6(H^{-1/2}(\Gamma))$, the error for $ \llbracket \partial_\nu e^\star_k \rrbracket  = \lambda^h_k - \lambda^h$ is bounded as
\begin{equation}\label{eq:4.8}
\| \alpha_k^2\llbracket \partial_\nu  e^\star_k(t) \rrbracket \|_{-1/2,\Gamma} \leq C k^2 \max\{1,t\} \left( H_3(\beta_0^{(4)},t | H^{1/2}(\Gamma)) + H_2(\beta_1^{(4)},t | H^{-1/2}(\Gamma)) \right).
\end{equation}
\end{theorem}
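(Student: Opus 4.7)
The strategy is to reduce the theorem to a trapezoidal-rule discrete analogue of Theorem \ref{the:3.1} applied to the error system \eqref{eq:4.4}. The two key structural observations are: (i) \eqref{eq:4.4} has the same form as \eqref{eq:2.10}/\eqref{eq:3.1} but with $\ddot{\,}$ replaced by the trapezoidal surrogates $\partial_k^2,\alpha_k^2$ and with consistency errors $\chi_k^h,\chi_k^\star$ replacing the transmission data; and (ii) Lemma \ref{lem:4.1} allows the Laplacian term in \eqref{eq:4.4b} to be reformulated variationally in precisely the form needed by the first-order framework of Section \ref{sec:3}, since the transmission conditions \eqref{eq:4.4c}--\eqref{eq:4.4d} are the homogeneous versions of those used to define the domain $D(\mathcal A)$. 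Note that the boundary data are homogeneous in the error system because $\beta_0,\beta_1$ are shared by the semidiscrete and fully discrete problems.

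First I would recast \eqref{eq:4.4} as a first-order system by introducing flux-like variables $\mathbf v_k^h, \mathbf v_k^\star$ with $\partial_k \mathbf v_k^h = \nabla\alpha_k e_k^h$ and $\partial_k \mathbf v_k^\star = \nabla\alpha_k e_k^\star$, mimicking the passage from \eqref{eq:3.1} to \eqref{eq:3.2}. Writing $U_k=(e_k^h,e_k^\star,\mathbf v_k^h,\mathbf v_k^\star)$, the system takes the schematic form $\partial_k U_k = \mathcal A(\alpha_k U_k) + R_k$ with $\mathsf B U_k = 0$, where $R_k$ packages $\chi_k^h$ and $\chi_k^\star$ into $\mathcal H$. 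I would then invoke the discrete trapezoidal-rule stability result from \cite{SaHaQiSa:2015}: because $\pm\mathcal A$ are maximal dissipative (Proposition \ref{prop:3.2}), the skew-symmetry identity $(\mathcal A\alpha_k U_k,\alpha_k U_k)_{\mathcal H}=0$ combined with the telescoping relation $2(\partial_k U_k,\alpha_k U_k)_{\mathcal H} = \partial_k \| U_k\|_{\mathcal H}^2$ yields a discrete energy inequality whose summed form gives $\|U_k(t)\|_{\mathcal H} \le \int_0^t \|R_k(\tau)\|_{\mathcal H}\,d\tau$ (in the continuous-time extension used for CQ).

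The consistency ingredient is the classical trapezoidal-rule bound $\|\chi_k(t)\|_X \le C k^2 \int_{t-2k}^{t}\|f^{(4)}(\tau)\|_X\,d\tau$ applied to $u^h$ and $u^\star$; the appearing fourth derivatives of $u^h,u^\star$ are in turn controlled by Corollary \ref{cor:3.1} applied to time-differentiated versions of the semidiscrete problem, which accounts for the appearance of three derivatives of $\beta_0,\beta_1$ on the right-hand side of \eqref{eq:4.5}. Once the energy bound \eqref{eq:4.5} is in hand, the $L^2$ bound \eqref{eq:4.6} follows by a further discrete time integration of $\widehat f_k^h,\widehat f_k^\star$, producing the additional factor of $t$. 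The $H^{1/2}$ trace estimate \eqref{eq:4.7} comes from applying the lifting bound of Proposition \ref{prop:3.3} to $\alpha_k^2 U_k$ together with the trace inequality, while \eqref{eq:4.8} requires the $\mathbf H(\mathrm{div})$-part of the same lifting bound applied to one more time-difference of the system; this need for an additional $\partial_k$ explains the gain of one unit of regularity in the data ($W_+^7$ and $W_+^6$ in place of $W_+^6$ and $W_+^5$).

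The main obstacle I expect is the careful book-keeping in the discrete energy identity, since $\partial_k$ and $\alpha_k$ act with staggered time shifts and the full telescoping/summation-by-parts argument must be carried out so that all constants remain independent of $h$ and of the horizon $t$; propagating the consistency error through the compound variable $U_k$, and then tracking which derivatives of $\beta_0,\beta_1$ ultimately control $R_k$ via the semidiscrete stability of Corollary \ref{cor:3.1}, is the most delicate bookkeeping step. In practice one would quote the abstract discrete-trapezoidal analogue of Theorem \ref{the:3.1} from \cite{SaHaQiSa:2015}, reducing the work here to the reformulation into first-order form, the application of the consistency estimate, and the lifting-based extraction of the trace and normal-trace error bounds.
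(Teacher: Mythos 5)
Your proposal is correct and follows essentially the same route as the paper: the discrete energy identity obtained by testing with $2\partial_k(\widehat e_k^h,\widehat e_k^\star)=2\alpha_k(\widehat f_k^h,\widehat f_k^\star)$ (the paper's Lemma \ref{lem:4.1} is exactly the statement that makes the $\mathcal A$-term vanish), the telescoping summation, the $O(k^2)$ Taylor consistency bound fed through Corollary \ref{cor:3.1} to produce third derivatives of $\beta_0,\beta_1$, the extra time integration for the $L^2$ bound, the trace theorem for \eqref{eq:4.7}, and one additional time-difference (hence one more derivative of the data) for the normal-trace bound \eqref{eq:4.8}. The only cosmetic difference is that you package the energy argument as an abstract discrete analogue of Theorem \ref{the:3.1} quoted from \cite{SaHaQiSa:2015}, whereas the paper carries out the induction directly at the variational level.
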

\begin{proof}
Using the definition of the hatted variables, \eqref{eq:4.4a}, \eqref{eq:4.4b}, and Lemma \ref{lem:4.1} it follows that 
\begin{alignat*}{6}
\partial_k\left( \widehat{f}_k^h (t),w^h\right)_{\Omega_-}+\partial_k \left( \widehat{f}_k^\star(t),w^\star\right)_{\mathbb R^d \setminus
	 \Gamma}
	+ \alpha_k \left(\kappa \nabla \widehat{e}_k^h(t), \nabla w^h\right)_{\Omega_-} + 		\alpha_k \left(\nabla \widehat{e}_
	  \kappa^\star(t), \nabla w^\star\right)_{\mathbb R^d \setminus \Gamma} \\
	 = (\chi_k^h(t),w^h)_{\Omega_-} + (\chi_k^\star(t),w^\star)_{\mathbb R^d \setminus \Gamma} 
	\quad \forall (w^h,w^\star) \in \mathcal{U}.
\end{alignat*}
We know by \eqref{eq:4.4c} and \eqref{eq:4.4d} that $(\widehat{e}^h_k(t), \widehat{e}^\star_k(t)) \in \mathcal{U}$ for all $t$.  We can then test the latter identity with $2\partial_k (\widehat{e}^h_k(t), \widehat{e}^\star_k(t)) = 2\alpha_k (\widehat{f}_k^h(t), \widehat{f}_k^\star(t))$ and re-order terms to obtain 
\begin{alignat*}{6}
\vertiii{\left( \widehat{e}_k^h(t), \widehat{e}_k^\star(t), \widehat{f}_k^h(t), \widehat{f}_k^\star(t)\right)}^2
	  = &  \vertiii{\left( \widehat{e}_k^h(t-k), \widehat{e}_k^\star(t-k), \widehat{f}_k^h(t-k), \widehat{f}_k^\star(t-k)\right)}^2\\
	 & + k \left( \chi_k^h(t), 2\alpha_k \widehat{f}_k^h(t)\right)_{\Omega_-} 
	 + k \left( \chi_k^\star(t), 2 \alpha_k \widehat{f}_k^\star(t) \right)_{\mathbb R^d \setminus \Gamma},
\end{alignat*}
where 
$$
\vertiii{\left(u,u^\star,v,v^\star\right)}^2:=\| c^{-1} v \|_{\Omega_-}^2 +\| \kappa^{1/2} \nabla u \|_{\Omega_-}^2 + \| v^\star \|_{\mathbb R^d \setminus \Gamma}^2 + \| \nabla u^\star \|_{\mathbb R^d \setminus \Gamma}^2.
$$
By induction,
\begin{alignat*}{6}
& \vertiii{\left( \widehat{e}_k^h(t), \widehat{e}_k^\star(t), \widehat{f}_k^h(t), \widehat{f}_k^\star(t)\right)}^2 \\
&  \hspace{.3in}= k \sum_{j \geq 0} \left(\left(\chi_k^h(t-t_j), 2 \alpha_k \widehat{f}_k^h(t-t_j)\right)_{\Omega_-}+ \left(\chi_k^\star(t-t_j), 2\alpha_k \widehat{f}_k^\star(t-t_j) \right)\right)_{\mathbb{R}^d \setminus\Gamma},
\end{alignat*}
where for each $t$ the sum is finite because all of the functions are causal.    We now take $t^\star \leq t$ such that 
$$
\vertiii{\left( \widehat{e}_k^h(t^\star), \widehat{e}_k^\star(t^\star), \widehat{f}_k^h(t^\star), \widehat{f}_k^\star(t^\star)\right)} = \max_{0 \leq \tau \leq t} \vertiii{\left( \widehat{e}_k^h(\tau), \widehat{e}_k^\star(\tau), \widehat{f}_k^h(\tau), \widehat{f}_k^\star(\tau)\right)}
$$
and therefore we can bound
\begin{alignat}{6}\label{eq:errorAlmostDone}
\nonumber
& \vertiii{\left( \widehat{e}_k^h(t^\star), \widehat{e}_k^\star(t^\star), \widehat{f}_k^h(t^\star), \widehat{f}_k^\star(t^\star)\right)}^2 \\ 
& \hspace{.5in} \leq 2 t^\star \vertiii{\left( \widehat{e}_k^h(t^\star), \widehat{e}_k^\star(t^\star), \widehat{f}_k^h(t^\star), \widehat{f}_k^\star(t^\star)\right)} \max_{0 \leq \tau \leq t^\star}\left( \|  \chi_k^h(\tau)\|_{\Omega_-} + \|\chi_k^\star(\tau)\|_{\mathbb R^d \setminus \Gamma} \right).
\end{alignat}
A simple Taylor expansion shows the following estimate of the consistency error for the trapezoidal rule:
$$
\| \chi_k^h (\tau) \|_{\Omega_-} + \| \chi_k^\star (\tau)\|_{\mathbb R^d \setminus \Gamma} \leq C k^2 \left( \max_{\tau -2k \leq \rho \leq \tau} \|(u^h(\rho))^{(4)} \|_{\Omega_-} + \max_{\tau -2k \leq \rho \leq \tau}  \| (u^\star(\rho))^{(4)}\|_{\mathbb R^d \setminus \Gamma} \right),
$$
which, combined with \eqref{eq:errorAlmostDone}, yields the error estimate
$$
\vertiii{\left( \widehat{e}_k^h(t), \widehat{e}_k^\star(t), \widehat{f}_k^h(t), \widehat{f}_k^\star(t)\right)} \leq C k^2 t \left(\max_{0\leq \tau\leq t} \|(u^h(\tau))^{(4)} \|_{\Omega_-} + \max_{0\leq \tau\leq t} \| (u^\star(\tau))^{(4)}\|_{\mathbb R^d \setminus \Gamma} \right).
$$
Applying the estimates from Corollary \ref{cor:3.1}, we have the final bound in the natural energy norm
$$
\vertiii{\left( \widehat{e}_k^h(t), \widehat{e}_k^\star(t), \widehat{f}_k^h(t), \widehat{f}_k^\star(t)\right)} \leq C k^2 t \left( H_3(\beta^{(3)}_0,t| H^{1/2}(\Gamma)) + H_2(\beta^{(3)}_1, t| H^{-1/2}(\Gamma))\right),
$$
where the constant $C$ is independent of $h$ and $t$.  This proves \eqref{eq:4.5}.
If we expand the differencing operator acting on the quantities $e_k^h(t)$ and $e_k^\star(t)$, we find
$$
\| e_k^h(t) \|_{\Omega_-} + \| e^\star_k(t)\|_{\mathbb R^d \setminus \Gamma} \leq \| e_k^h(t-k) \|_{\Omega_-} + \| e^\star_k(t-k)\|_{\mathbb R^d \setminus \Gamma} + k \|f_k^h(t)\|_{\Omega_-} + k \| f_k^\star(t)\|_{\mathbb R^d \setminus \Gamma}.
$$
We may then proceed as before and show the $L^2(\Omega_-) \times L^2(\mathbb R^d)$ error bound
$$
\| e_k^h(t) \|_{\Omega_-} + \| e^\star_k(t)\|_{\mathbb R^d \setminus \Gamma} \leq C k^2 t^2 \left(H_3(\beta^{(3)}_0, t| H^{1/2}(\Gamma)) + H_2(\beta^{(3)}_1, t| H^{-1/2}(\Gamma))\right),
$$
which establishes \eqref{eq:4.6}.  To prove \eqref{eq:4.7} we apply the trace theorem and the previous $L^2(\mathbb{R}^d\setminus\Gamma)$ and $H^1(\mathbb{R}^d\setminus\Gamma)$ estimates:
\begin{alignat*}{6}
\| \alpha_k^2\llbracket \gamma e_k^\star(t)\rrbracket \|_{1/2, \Gamma} 	& \leq C \left( \| \alpha_k \nabla \widehat{e}_k^\star(t) \|_{\mathbb{R}^d\setminus\Gamma} + \| \alpha_k^2 e_k^\star(t) \|_{\mathbb{R}^d\setminus\Gamma} \right) \\													& \leq  C   \left(\max_{0\leq\tau\leq t} \| \nabla \widehat{e}_k^\star(\tau) \|_{\mathbb{R}^d\setminus\Gamma} + \max_{0\leq\tau\leq t} \|e_k^\star(\tau)\|_{\mathbb{R}^d\setminus\Gamma}\right)\\
											& \leq C k^2 \max\{t,t^2\} \left(H_3(\beta_0^{(3)},t |H^{1/2}(\Gamma)) + H_2(\beta_1^{(3)},t | H^{-1/2}(\Gamma))\right).
\end{alignat*}
Note that 
\begin{alignat*}{6}
\| \alpha_k^2 \llbracket \partial_\nu e_h^\star(t) \rrbracket \|_{-1/2,\Gamma} \leq & C\left( \| \alpha_k^2 \nabla e_k^\star(t)\|_{\mathbb{R}^d \setminus\Gamma} + \| \alpha_k^2 \Delta e_k^\star(t) \|_{\mathbb{R}^d \setminus\Gamma}\right) \\
														\leq & C \left( \max_{0\leq \tau \leq t} \| \nabla \widehat{e}_k^\star(\tau)\|_{\mathbb{R}^d\setminus\Gamma} + 																										\| \partial_k \widehat{f}_k^\star(t) \|_{\mathbb{R}^d \setminus\Gamma} + \| \chi_k^\star(t)\|_{\mathbb{R}^d\setminus\Gamma} \right) \\	
														\leq & C \left( \max_{0\leq \tau \leq t} \| \nabla \widehat{e}^\star(\tau)\|_{\mathbb{R}^d\setminus\Gamma} + \max_{0\leq \tau \leq t} \left\| \frac{d}{dt}	\widehat{f}_k^\star(\tau) \right\|_{\mathbb{R}^d\setminus\Gamma} + \|\chi_k^\star(t)\|_{\mathbb{R}^d\setminus\Gamma} \right)									
\end{alignat*}
where we have applied \eqref{eq:4.4b} and the Mean Value Theorem.  The final bound \eqref{eq:4.8} follows from the previous estimates and the fact that the error corresponding to data $(\dot{\beta}_0, \dot{\beta}_1)$ is the time derivative of the error.  This is due to the fact that all operators involved are convolution operators.  This finishes the proof.
\end{proof}

\subsection{Algorithm}\label{sec:4.2}

We fix a basis for the finite dimensional space $U_h$ (the FEM space) and for the spaces $X_h$ and $Y_h$ (the BEM spaces).   Let $\mathrm V_h(s)$, $\mathrm K_h(s)$, $\mathrm W_h(s),$ and $\mathrm I_h$ be the matrix representations of the bilinear forms 
\begin{alignat*}{6}
\langle \cdot, V(s) \cdot \rangle &:X_h \times X_h \rightarrow \mathbb{C}, 	&& \quad \langle \cdot, K(s) \cdot \rangle :X_h \times Y_h \rightarrow \mathbb{C}, \\
\langle W(s) \cdot, \cdot \rangle& : Y_h \times Y_h \rightarrow \mathbb{C}, &&\quad \langle \cdot, \cdot \rangle: X_h \times Y_h \rightarrow \mathbb{R}.
\end{alignat*}
These matrix-valued functions of $s$ involve only the boundary element spaces. Let $\mathrm M_h$ and $\mathrm S_h$ be the finite element mass and stiffness matrices, that is, the matrix representation of the symmetric bilinear forms
\[
(c^{-2} \cdot,\cdot)_{\Omega_-}:U_h\times U_h \to \mathbb R,
\qquad
(\kappa \nabla\cdot,\nabla\cdot)_{\Omega_-}:U_h\times U_h \to \mathbb R.
\]
Finally, let $\Gamma_h$ be the matrix representation of $\langle \cdot , \gamma \cdot \rangle : X_h \times U_h \rightarrow \mathbb{R}.$   This is the only matrix that connects the finite and boundary element spaces, a connection simply established through inner products.   

For simplicity of exposition, let us assume that the functions $\beta_0(t_n)$ and $\beta_1(t_n)$ have been projected or interpolated onto the spaces $Y_h$ and $X_h$, respectively.   The corresponding vectors of coefficients will be denoted $\boldsymbol{\beta}_{1,n}$ and $\boldsymbol{\beta}_{0,n}$.   The marching-on-in-time scheme for discretization inverts the same large coupled operator at each time step, and then updates the right hand side with past values of the solution. It can be interpreted in the following form: in the interior domain we have a trapezoidal rule discretization of the FEM-semidiscrete wave equation with Neumann (unknown) boundary conditions.
\begin{alignat}{6}\label{eqCQFEM}
\frac{4}{k^2}\mathrm M_h \mathbf{u}_n +\mathrm S_h \mathbf{u}_n - \Gamma_h^t \boldsymbol{\lambda}_n	=	
&   \Gamma_h^t \left( \boldsymbol{\beta}_{1,n} + 2\boldsymbol{\beta}_{1,n-1} + \boldsymbol{\beta}_{1,n-2} - 2\boldsymbol{\lambda}_{n-1} + \boldsymbol{\lambda}_{n-2} \right)  \\
\nonumber
		& - \frac{1}{k^2}\mathrm M_h \left( 2 \mathbf{u}_{n-1} - \mathbf{u}_{n-2} \right) 
		+\mathrm S_h\left( 2\mathbf{u}_{n-1} - \mathbf{u}_{n-2}\right),
\end{alignat}
while in the exterior domain a trapezoidal rule CQ scheme discretizes a symmetric Galerkin-BEM system with given (yet unknown) Dirichlet data
\begin{alignat}{6}\label{eqCQBEM}
	\left[ 
	\begin{array}{c}
	\Gamma_h \mathbf{u}_n  \\
		0
	\end{array}
	\right]
	&+
	\left[
	\begin{array}{cc}
	\mathrm{V}_h(2/k)							&	-\frac{1}{2} \mathrm I_h +\mathrm{K}_h(2/k)  \\
	\frac{1}{2} \mathrm I_h^t + \mathrm{K}_h^t(2/k)		&	\mathrm{W}_h(2/k)
	\end{array}
	\right]
	\left[
	\begin{array}{c}
	\boldsymbol{\lambda}_n \\
	\boldsymbol{\phi}_n 
	\end{array}
	\right] \\
&\nonumber\hspace{2cm}= 
	\left[
	\begin{array}{c}
	\mathrm I_h \boldsymbol{\beta}_{0,n} \\
	0
	\end{array}
	\right]
	-  \sum_{m=1}^n
	\left[
	\begin{array}{cc}
	\omega_m^{\mathrm{V}_h}(k)		&	\omega_m^{\mathrm{K}_h}(k) 	\\
	\omega_m^{\mathrm{K}_h^t}(k)			&	\omega_m^{\mathrm{W}_h}(k) 
	\end{array}
	\right]
	\left[
	\begin{array}{c}
	\boldsymbol{\lambda}_{n-m} 	\\
	\boldsymbol{\phi}_{n-m}
	\end{array}
	\right].
\end{alignat}
As we progressively compute the vectors $\mathbf u_n$, $\boldsymbol\lambda_n$, and $\boldsymbol\phi_n$, we can input the latter two in the CQ-discretized potential expression (using the basis representation for elements of $Y_h$ and $X_h$):
$$
u^\star_k(t_n) = \sum_{m=0}^n \omega_m^{\mathrm{D}_h}(k)\boldsymbol{\phi}_{n-m} - \sum_{m=0}^n \omega_m^{\mathrm{S}_h}(k)\boldsymbol{\lambda}_{n-m}.
$$
The convolution weights $\omega_m^J(k)$ for $J \in \{ \mathrm{V}_h, \mathrm{K}_h, \mathrm{K}^t_h, \mathrm{W}_h, \mathrm{S}_h, \mathrm{D}_h\}$ are computable based on the Taylor expansion of the appropriate transfer function, as in \eqref{eq:4.1a}. Alternatively, the memory term in the right-hand side of \eqref{eqCQBEM} and the potential representations can be evaluated using FFT-based techniques \cite {BaSc:2012,HaSa:2014}.

\paragraph{Spaces of piecewise polynomials.} Let us now focus on the case when $\Omega_-$ is a polygon or polyhedron that has been partitioned into triangles or tetrahedra. We choose $U_h$ to be the space of continuous piecewise polynomial functions of degree at most $p \geq 1$, $Y_h$ to be the space of continuous piecewise polynomial functions of degree at most $p$ on the inherited partition of the boundary, and $X_h$ to be the space of discontinuous piecewise polynomial functions of degree at most $p-1$ on the same partition of the boundary. Note that the use if the inherited partition on the boundary is done for the sake of simplicity but is not a necessary theoretical assumption. For this choice of spaces, $Y_h$ can be identified with the trace space of $U_h$, and therefore, the matrix $\Gamma_h$ can be computed from $\mathrm I_h$ identifying degrees of freedom of $Y_h$ with the boundary degrees of freedom of $U_h$. In the two dimensional case, $X_h$ and $Y_h$ have the same dimension, and therefore all boundary matrices are square.

We can now give a simple error estimate for the case of smooth solutions of our problem.   Suppose, for instance, that $c$ and the components of the matrix-valued function $\kappa$ are $\mathcal{C}^\infty$, that $c\equiv 1$ in a neighborhood of $\Gamma$, and $\kappa \equiv \mathrm{I}$ (the identity matrix) in a neighborhood of $\Gamma$ as well.   Let the incident wave be a plane wave $u^{\mathrm{inc}}(t)(\mathbf x) = \psi(\mathbf x \cdot \mathbf d - t - t_0)$, where $\psi$ is a smooth causal function, $| \mathbf d | =1$, and $t_0$ is taken so that the support of $u^{\mathrm{inc}}$ does not intersect $\Omega_-$ at time $t=0$.   In this case the solutions of \eqref{eq:2.3} are smooth functions of space and time and the restriction of the boundary unknowns $\lambda$ and $\phi$ to the faces of $\Gamma$ are smooth. Using Corollary \ref{cor:3.2} and standard estimates for approximations by piecewise polynomials, we can prove that the semidiscrete error satisfies
$$
\| u(t) - u^h(t) \|_{1,\Omega_-} + \| \phi(t) - \phi^h(t)\|_{1/2,\Gamma} + \| \lambda(t) - \lambda^h(t) \|_{-1/2,\Gamma} = \mathcal{O}(h^p).
$$
Consider now the quantities 
$$
e^u_n := u^h(t_n)- u_k^h(t_n), \quad e_n^\lambda:= \lambda^h(t_n) - \lambda^h_k(t_n), \quad \text{and} \quad e^\phi_n:= \phi^h(t_n)-\phi_k^h(t_n). 
$$
Then, by Theorem \ref{th:4.2}, we can prove
\begin{alignat*}{6}
\left\| \smallfrac{1}{2} (e^u_n + e^u_{n-1}) \right\|_{1,\Omega_-} + & \left\| \smallfrac{1}{4}(e_n^\phi + 2 e_{n-1}^\phi + e^\phi_{n-2}) \right\|_{1/2,\Gamma} \\
													& + \left\| \smallfrac{1}{4}(e_n^\lambda + 2e^\lambda_{n-1} +e^\lambda_{n-2}) \right\|_{-1/2,\Gamma} = \mathcal{O}(k^2).
\end{alignat*}

\paragraph{Parallelizing computations.} In equations \eqref{eqCQFEM} and \eqref{eqCQBEM}, we can see that the finite element time stepping component of the solution has a short tail, i.e. it has a memory of only two time steps, while the boundary integral right hand sides have contributions from every previously computed time step.  Computing the convolutional tails for the boundary integral equations can become expensive.  To overcome this bottleneck, we use a reduction to the boundary method that decouples the solution process into three steps: solving first for an intermediate variable $\mathbf w$ (the result of solving an interior Neumann problem corresponding to the action of the incident wave), solving next for the boundary densities, and finally solving for the interior unknown.  While this seems to require more solves than the time stepping method, this strategy does not require the computation of the convolutional tail at each time and can therefore be implemented in parallel.   The all-steps-at once CQ method of \cite{BaSa:2008} is used for the parallel time stepping. Consider the Finite Element matrix $\mathrm F_h(s):= \mathrm S_h+ s^2 \mathrm M_h,$ which is the Laplace transform of the FEM-semidiscrete wave equation in the interior domain. The method consists of the following sequential steps:
\begin{enumerate}
\item Compute the intermediate variable $\mathbf w_n$ by solving the convolution 
$$
\sum_{m=0}^n \omega_m^{\mathrm F_h}(k) \mathbf w_{n-m} = \Gamma_h^t \boldsymbol{\beta}_{1,n} \qquad n=0,\dots,N
$$
in parallel across the time steps. Equivalently, use the trapezoidal rule (with zero initial values) for the differential equation $\mathrm M_h \ddot{\mathbf w}(t)+\mathrm S_h \mathbf w(t)=\Gamma_h^t \boldsymbol\beta_1(t),$ where $\boldsymbol\beta_1(t)$ is the projection onto $X_h$ of the actual transmission data.
\item Instead of time-stepping to compute the boundary unknowns $\boldsymbol{\lambda}_n$ and $\boldsymbol{\phi}_n$ by 
\begin{alignat*}{6}
\sum_{m=0}^n
\left[
\begin{array}{cc}
 	\omega_m^{\mathrm V_h}(k) + \Gamma_h \omega_m^{\mathrm{F}^{-1}_h}(k) \Gamma_h^t	
 		&		\omega_m^{\mathrm{K}_h}(k)  \\
	\omega_m^{\mathrm K_k^t}(k) 		&		\omega_m^{\mathrm W_h}(k)
\end{array}
\right]
\left[
\begin{array}{c}
 	\boldsymbol{\lambda}_{n-m} \\
	\boldsymbol{\phi}_{n-m}
\end{array}
\right]
&+
\frac{1}{2}
\left[
\begin{array}{c}
 - \mathrm I_h^t \boldsymbol{\phi}_n \\
   \mathrm I_h\boldsymbol{\lambda}_n
\end{array}
\right] \\
& \hspace{-2cm} = 
\left[
\begin{array}{c}
\boldsymbol{\beta}_{0,n} - \Gamma_h \mathbf w_n \\
			0
\end{array}
\right],
\quad n=0,\dots,N,
\end{alignat*}
we apply the all-steps-at-once strategy to approximate CQ solutions \cite{BaSa:2008}. This requires solving in parallel systems with matrix
\[
\mathrm B_h(s):=\left[\begin{array}{cc}
	\mathrm V_h(s) + \Gamma_h \mathrm F_h^{-1}(s) \Gamma_h^t 
		&-\frac12\mathrm I_h+ \mathrm K_h(s) \\
	\frac12\mathrm I_h^t+\mathrm K_h^t(s) & \mathrm W_h(s)
\end{array}\right]
\]
for a large number of complex frequencies $s$ (with non-zero real part). Note that the construction of the above matrix (if a direct method is to be used) requires the solution of one linear system related to $\mathrm F_h(s)$ for each column of $\Gamma_h^t$. 
\item Compute the interior unknown by 
$$
\sum_{m=0}^n \omega_m^{\mathrm F_h}(k) \mathbf{u}_{n-m} = \Gamma_h^t \left( \boldsymbol{\beta}_{1,n} + \boldsymbol{\lambda}_n \right), \qquad n =0,\dots,N
$$
or use an equivalent trapezoidal method for an interior problem (with the correct boundary data now that we have computed $\boldsymbol\lambda_n$), or use an all-steps-at-once to compute $\mathbf u_n$ using a parallel algorithm.
\end{enumerate}
The exterior solution can be postprocessed at the end of the second step. If we use an iterative method for the solution of a system associated to the matrix $\mathrm B_h(s)$, every matrix-vector multiplication requires the solution of a sparse linear system associated to the interior domain. Efficient methods to handle this discrete scheme are the goal of further investigation. (In all the numerical experiments below, system solves are handled with Matlab's backslash operator.)

\section{Numerical experiments and simulations}\label{sec:5}

We perform some numerical experiments to demonstrate the coupling scheme and corroborate our theoretical results.  The first numerical experiment is created by studying an artificial scattering problem on the domain $[-0.5,0.5]^2$. Instead of an incident wave, we generate transmission data on $\Gamma$ so that the solution in the interior and exterior domains is known exactly.  In the interior, we take the solution to be a plane wave moving in the direction $(1/\sqrt{2}, 1/\sqrt{2})$ and transmitting the signal $\sin(2t)\chi(t)$ where $\chi(t)$ is a smooth cutoff function so the signal has compact support in time.  The exterior solution is a cylindrical wave due to a source point at the origin transmitting the signal $\sin^6(4t)H(t)$ where $H(t)$ is the Heaviside function.   We take $c \equiv 1$ and 
$$
\kappa(x,y) = 
\left(
\begin{array}{cc}
1 + 0.5\left(x^2 + y^2\right) 	&	0.25 + 0.5 \left(x^2 +y^2\right) 	\\
0.25 + 0.5 \left(x^2 +y^2\right)	&	3 + 0.5 \left(x^2 +y^2\right)	\\
\end{array}
\right).
$$
A body force term $f(x,y,t)$ is added in the interior domain (the equation is thus $c^{-2}\ddot u=\mathrm{div}(\kappa\nabla u)+f$) so that the chosen function (a plane wave) satisfies the wave equation in $\Omega_-$.  We discretize in space with standard $\mathbb P_1$ FEM for the interior variable and $\mathbb P_1 \times \mathbb P_0 $ BEM (i.e., $Y_h$ and $X_h$ are respectively spaces of continuous $\mathbb P_1$ and discontinuous $\mathbb P_0$ functions) for the boundary unknowns.  The simulation is run from $t=0$ to $t=3$ so that by the final time the exact solution is non-zero in both sides of the transmission boundary.  Time discretization is carried out with trapezoidal rule based CQ.  

For our error quantities, we use the following measures:
\begin{alignat*}{6}
&\mathrm{E}^u_{L^2}(t) := \| u(t) - u_k^h(t) \|_{\Omega_-} ,					&& \mathrm{E}^u_{H^1}(t):= \| u(t) - u_k^h(t)\|_{1,\Omega_-}, \\
&\mathrm{E}^\lambda(t) := \| \lambda(t) - \lambda_k^h(t) \|_{\Gamma},		&& \mathrm{E}^\phi(t) := \| \phi(t) - \phi_k^h(t)\|_\Gamma, \\
& \mathrm{E}^{\mathrm{obs}}(t) : = \max_j |u_+(t)(\mathbf x_j) - u_k^\ast(t)(\mathbf x_j) |.
\end{alignat*}
In $\mathrm{E}^{\mathrm{obs}}(t)$,  $\{\mathbf x_j\} $ is a finite collection of points in $\Omega_+$.   Note that we do not have any result asserting superconvergence in the $L^2(\Omega_-)$ norm for $u$, superconvergence in the $L^2(\Gamma)$ norm for $\phi$, or convergence in the $L^2(\Gamma)$ norm for $\lambda$.

Tables \ref{table:1} and \ref{table:2} summarize the results from the convergence study.   We use uniform triangulations with $N_{FEM}$ elements in $\Omega_-$ and $N_{BEM}$ elements on the boundary and perform $M$ time steps to reach the final time $t=3$.
\begin{table}[H]
\begin{center}
\vspace{.1in}
\begin{tabular}{|c|c|c|c|c|c|}
 $N_{FEM}$	&	$M$	&  $\mathrm{E}_{L^2}^u(3)$ 	& 	e.c.r.		&	 $\mathrm{E}_{H^1}^u(3)$ 	&	e.c.r.		\\
 \hline
32  		& 	20	&	 2.4029e-02   	&	    -		&		2.6050e-01   	&	    -		\\ 
\hline	
128 		&  	40	&	5.6609e-03	&	2.0857	&		1.2017e-01	&	1.1162	\\
\hline   
512		&  	80	&	1.4013e-03	&	2.0143	&	   	6.0233e-02   	&	0.99642	\\
\hline
2048		&  	160	&	3.4927e-04   	&	2.0043	&		3.0235e-02   	&	0.99432	\\
\hline	
8192   	&  	320	&	8.7041e-05   	&	2.0046	&		1.5147e-02   	&	0.99721	\\
\hline
32678 	&  	640	&	2.2092e-05   	&	1.9782	&		7.5796e-03   	&	0.99884	\\ 
\hline
\end{tabular}
\end{center}
\caption{Convergence of the interior variable with $\mathbb{P}_1$ FEM (coupled with $\mathbb P_1\times\mathbb P_0$ BEM) and trapezoidal rule time stepping.}\label{table:1}
\end{table}
\begin{table}[H]
\begin{center}
\begin{tabular}{|c|c|c|c|c|c|c|c|}
$N_{BEM}$ &   $M$ 	&	$\mathrm{E}^\lambda(3)$	&	e.c.r.		&	$\mathrm{E}^\phi(3)$ 	&	e.c.r		& $\mathrm{E}^{\mathrm{obs}}(3)$  	&       e.c.r 	\\
\hline
16  	&	 20	&	4.7204e-01   					&	-		&		9.1250e-02   			&	-		&	2.7533e-02		&	  -		\\
\hline
32	&	40	&	1.3196e-01   					&	1.8388	&		2.4295e-02   			&	1.9092	&	2.0929e-02		&	0.39563	\\
\hline
64  	& 	80	&	4.9760e-02   					&	1.4071	&		6.0872e-03   			&	1.9968	&	2.8444e-03		&	2.8793	\\
\hline
128  	&	160	&	1.8880e-02   					&	1.3981	&		1.5196e-03   			&	2.0021	&	6.2183e-04		&	2.1935	\\
\hline
256 	&	 320	&	7.2700e-03   					&	1.3768	&		3.8422e-04   			&	1.9837	&	1.5322e-04		&	2.0210	\\
\hline
512 	& 	640	&	3.0133e-03   					&	1.2706	&		1.0697e-04   			&	1.8448	&	3.8211e-05		&	2.0035	\\
\hline
\end{tabular}
\end{center}
\caption{Convergence of boundary and exterior variables with $\mathbb{P}_1 \times \mathbb{P}_0$ BEM (coupled with $\mathbb P_1$ FEM) and trapezoidal rule based CQ. Note that we are measuring errors for $\lambda$ in a stronger norm than the one used in the theory.}\label{table:2}
\end{table}

\paragraph{A second trapezoidal rule experiment.}  We repeat the previous experiment with the same, replacing the spatial discretization by $\mathbb{P}_2$ FEM coupled with $\mathbb{P}_2 \times \mathbb{P}_1$ BEM.   Our theory predicts order two convergence in all variables for this experiment, which was not seen in the previous example because of the use of lower order FEM and BEM.  We see comparable errors to the first experiment with reduced discretization parameters.

Tables \ref{table:1a} and \ref{table:2a} summarize the results from this convergence study.   We again use uniform triangulations with $N_{FEM}$ elements in $\Omega_-$ and $N_{BEM}$ elements on the boundary and perform $M$ time steps to reach the final time $t=3$.
\begin{table}[H]
\begin{center}
\vspace{.1in}
\begin{tabular}{|c|c|c|c|c|c|}
 $N_{FEM}$	&	$M$	&  $\mathrm{E}_{L^2}^u(3)$ 	& 	e.c.r.		&	 $\mathrm{E}_{H^1}^u(3)$ 	&	e.c.r.		\\
 \hline
8 		& 	10	&	 7.9596e-02   	&	    -		&		3.9938e-01   	&	    -		\\ 
\hline	
32		&  	20	&	1.6335e-02	&	2.2847	&		1.6148e-01	&	1.3064	\\
\hline   
128		&  	40	&	3.7889e-03	&	2.1081	&	   	2.4973e-02   	&	2.6929	\\
\hline
512		&  	80	&	8.5690e-04   	&	2.1446	&		5.1730e-03   	&	2.2713	\\
\hline	
2048   	&  	160	&	2.0934e-04   	&	2.0333	&		1.2740e-03   	&	2.0217	\\
\hline
8192 	&  	320	&	5.2069e-05  	&	2.0074	&		3.3478e-04   	&	1.9281	\\ 
\hline
\end{tabular}
\end{center}
\caption{Convergence of the interior variable with $\mathbb{P}_2$ FEM and trapezoidal rule time stepping.}\label{table:1a}
\end{table}
\begin{table}[H]
\begin{center}
\begin{tabular}{|c|c|c|c|c|c|c|c|}
$N_{BEM}$ &   $M$ 	&	$\mathrm{E}^\lambda(3)$	&	e.c.r.		&	$\mathrm{E}^\phi(3)$ 	&	e.c.r		& $\mathrm{E}^{\mathrm{obs}}(3)$  	&       e.c.r 	\\
\hline
8 	&	 10	&	4.0011e+00   					&	-		&		2.7841e-01   			&	-		&	2.1634e-02		&	  -		\\
\hline
16	&	20	&	6.6196e-01   					&	2.5956	&		4.9454e-02   			&	2.4931	&	2.3736e-02		&	-0.1338	\\
\hline
32  	& 	40	&	5.8355e-02  					&	3.5038	&		1.0361e-02   			&	2.2549	&	8.1811e-03		&	1.5367	\\
\hline
64  	&	80	&	1.3106e-02 					&	2.1546	&		2.5240e-03  			&	2.0374	&	4.7098e-04		&	4.1186	\\
\hline
128 	&     160	&	3.4291e-03  					&	1.9343	&		6.1230e-04   			&	2.0434	&	9.1814e-05		&	2.3589	\\
\hline
256 	&     320	&	1.4502e-03   					&	1.2416	&		1.5236e-04   			&	2.0068	&	2.3948e-05		&	1.9388	\\
\hline
\end{tabular}
\end{center}
\caption{Convergence of boundary and exterior variables with $\mathbb{P}_2 \times \mathbb{P}_1$ BEM and trapezoidal rule based CQ.}\label{table:2a}
\end{table}

\paragraph{Runge-Kutta based CQ.} A second numerical experiment makes use of the Runge-Kutta CQ method of \cite{BaLuMe:2011,BaMeSc:2012,LuOs:1993}.  The idea of RKCQ is similar to that of the scalar case, but rather than using a linear multistep method for the CQ discretization of the transfer function, an implicit Runge-Kutta method is instead applied.  The cost, however, is in the need to evaluate the linear systems resulting from spatial discretization each stage of the RK method.  The analysis of RKCQ methods was carried out in \cite{BaLu:2011, BaLuMe:2011} using abstract arguments in the Laplace domain: in principle, we expect the convergence order to be limited to the stage order, although potential postprocessings enjoy the full classical order of the RK method. (We also note that RKCQ methods have been reported to enjoy better dispersion properties than multistep-CQ schemes \cite{BaSc:2012}.)  The experiment below is set on the same example and triangulations as the first experiment, but is discretized in space with $\mathbb{P}_2$ FEM and $\mathbb{P}_2 \times \mathbb{P}_1$ BEM and in time with CQ based on the two-stage Radau IIa scheme, a method of classical order 3 and stage order 2.  Tables \ref{table:3} and \ref{table:4} below demonstrates convergence order more than three, which was otherwise impossible using CQ based on a linear multistep method. 
\begin{table}[H]
\begin{center}
\begin{tabular}{|c|c|c|c|c|c|}
 $N_{FEM}$	&	$M$	&  $\mathrm{E}_{L^2}^u(3)$ 	& 	e.c.r.		&	  $\mathrm{E}_{H^1}^u(3)$ 	&	e.c.r.		\\
\hline
    8 	&  	20	&	7.2998e-02	&	-		&		5.8872e-01	&	-	\\
\hline   
   32	&  	40	&	2.8039e-02	&	1.3804	&	   	2.8675e-01   	&	1.0378	\\
\hline
128	&  	80	&	5.5717e-03   	&	2.3313	&		1.1027e-01  	&	1.3787	\\
\hline	
512   &  160	&	6.8020e-04   	&	3.0341	&		3.1564e-02   	&	1.8047	\\
\hline
1024 &  320	& 	7.9143e-05	&	3.1034	&		8.3212e-03	&	1.9234	\\
\hline
2048 &  640 	&	9.6606e-06	&	3.0343	&		2.1209e-03	&	1.9721	\\
\hline
\end{tabular}
\end{center}
\caption{Convergence of the interior variable when using $\mathbb{P}_2$ FEM and two-stage Radau IIa time stepping.}\label{table:3}
\end{table}
\begin{table}[H]
\begin{center}
\begin{tabular}{|c|c|c|c|c|c|c|c|}
$N_{BEM}$ 	&      $M$ 	&	$\mathrm{E}^\lambda(3)$	&	e.c.r.		&	$\mathrm{E}^\phi(3)$	&	e.c.r		& $\mathrm{E}^{\mathrm{obs}}(3)$  	&       e.c.r 	\\
\hline
4 	&   20	&	9.2094e-01   						&	-		&		2.4561e-01   			&	-		&	6.0797e-02		&	-		\\
\hline
8  	&  40		&	4.0652e-01   						&	1.1798	&		6.5693e-02   			&	1.9026	&	2.8148e-02		&	1.1109	\\
\hline
16  	&  80		&	1.3784e-01  						&	1.5603	&		9.8444e-03  			&	2.7384	&	2.7418e-03		&	3.3598	\\
\hline
32 	& 160	&	3.4386e-02   						&	2.0031	&		9.3028e-04  			&	3.4036	&	2.1949e-04		&	3.6429	\\ 
\hline
64	  &  320	&	7.3343e-03						&	2.2291	&		6.2996e-05			&	3.8843	&	1.4604e-05		&	3.9097	\\
\hline
128	 &    640	&	1.1850e-03						&	2.6297	&		3.8242e-06			&	4.0420	&	9.9590e-07		&	3.8743	\\
\hline
\end{tabular}
\end{center}
\caption{Convergence of boundary and exterior variables when using $\mathbb{P}_2 \times \mathbb{P}_1$ BEM and two-stage Radau IIa based RKCQ. }\label{table:4}
\end{table}
\paragraph{Simulation of scattering by a single obstacle.} Next, we perform a simulation of a scattering problem with a known incident plane wave and unknown exact solution.  An incident plane wave traveling in the direction $(1/\sqrt{2},1/\sqrt{2})$ interacts with the obstacle $\Omega_- = [-0.5, 0.5]^2$.  The material properties in $\Omega_-$ have a Gaussian lensing effect described by a non-homogeneous multiple of the identity tensor $\kappa(x,y) = (1 - 1.65e^{-1/(1-r^2)})\mathrm I,$ where $r=\sqrt{x^2 +y^2},$ and we take $c\equiv 1$.
The spatial discretization makes use of $\mathbb{P}_3$ finite elements in the interior with 8192 interior elements and $\mathbb{P}_3 \times \mathbb{P}_2$ boundary elements with 256 boundary elements on $\Gamma$.  The CQ time step is $k = 4.375\times 10^{-3}$ and we integrate from $t=0$ to $t=3.5$.  Some snapshots of the scattering process are shown in Figure \ref{fig:2}.

\begin{figure}[htb]
\begin{center}
\includegraphics[scale=.3]{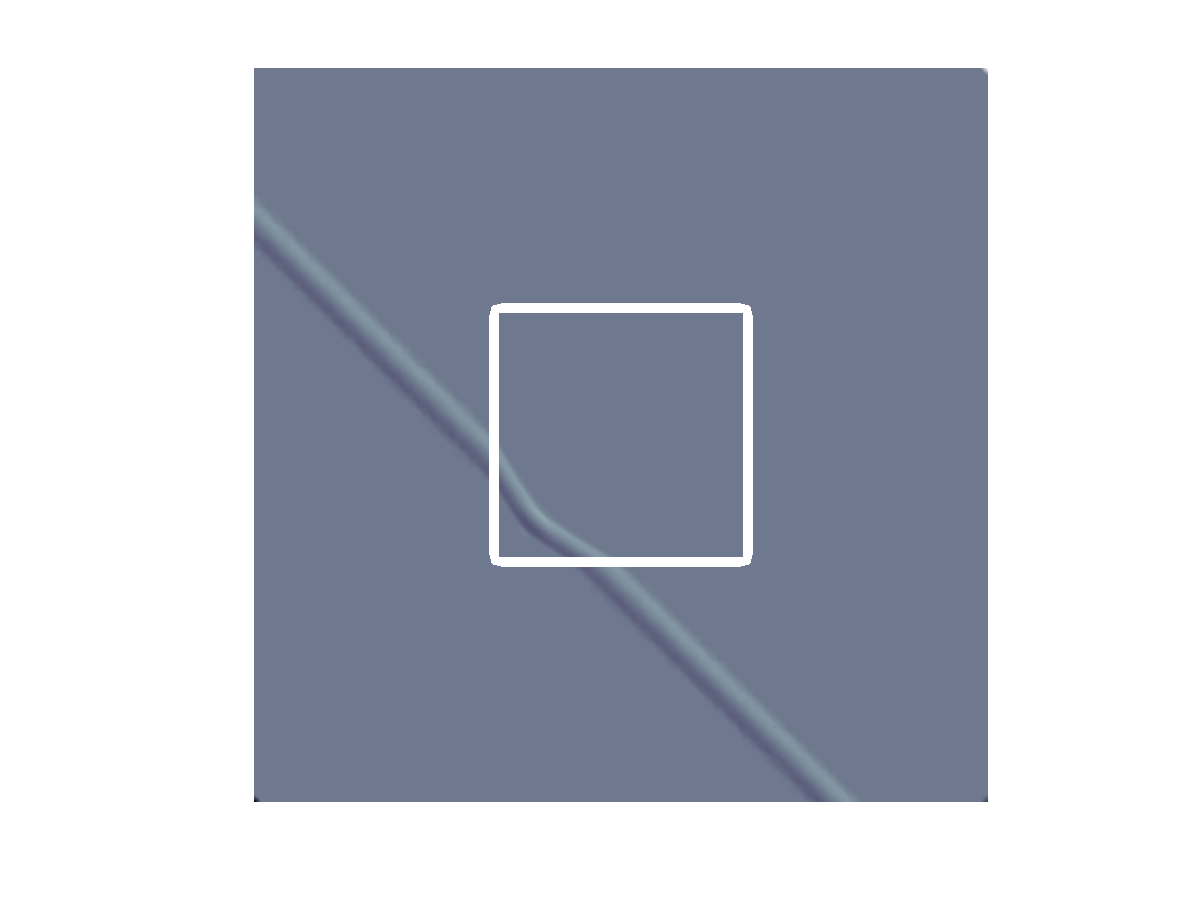}
\includegraphics[scale=.3]{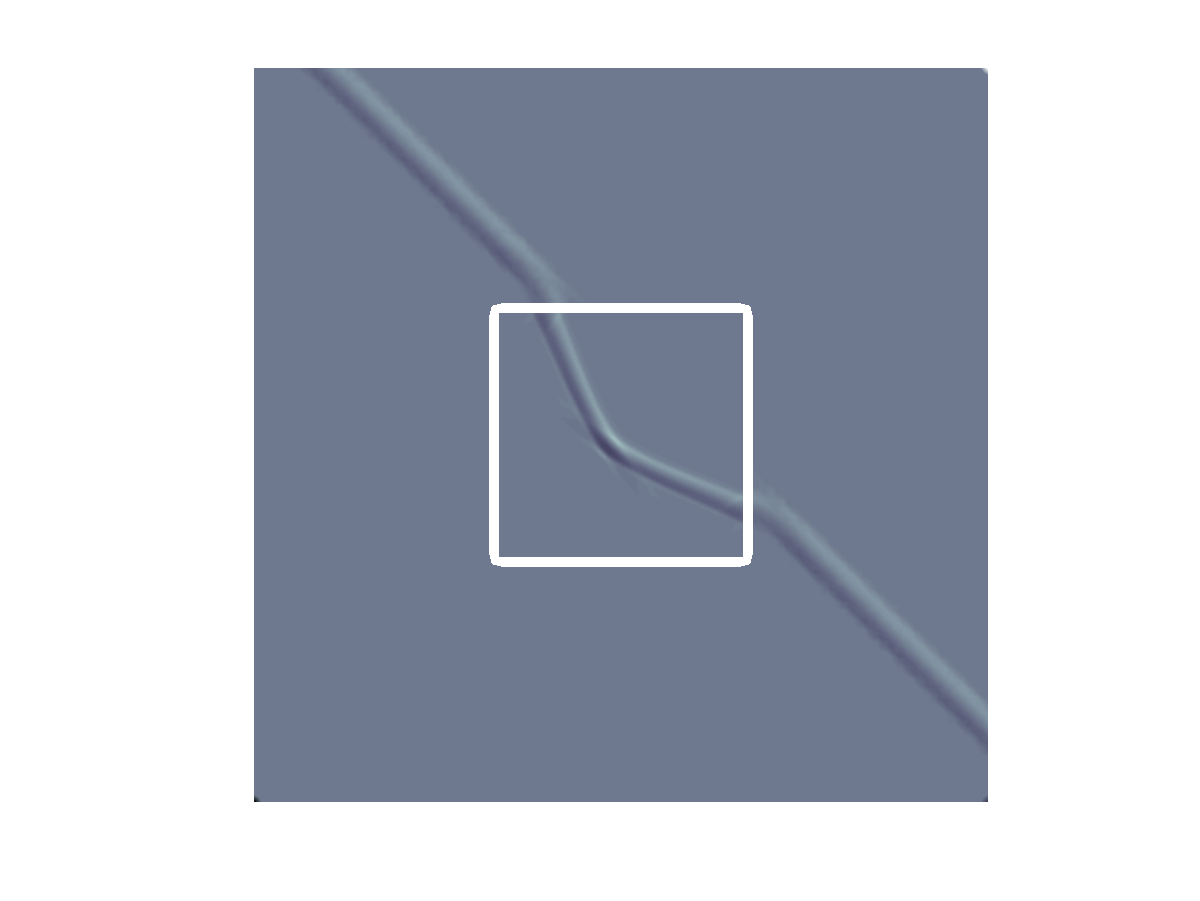}

\includegraphics[scale=.3]{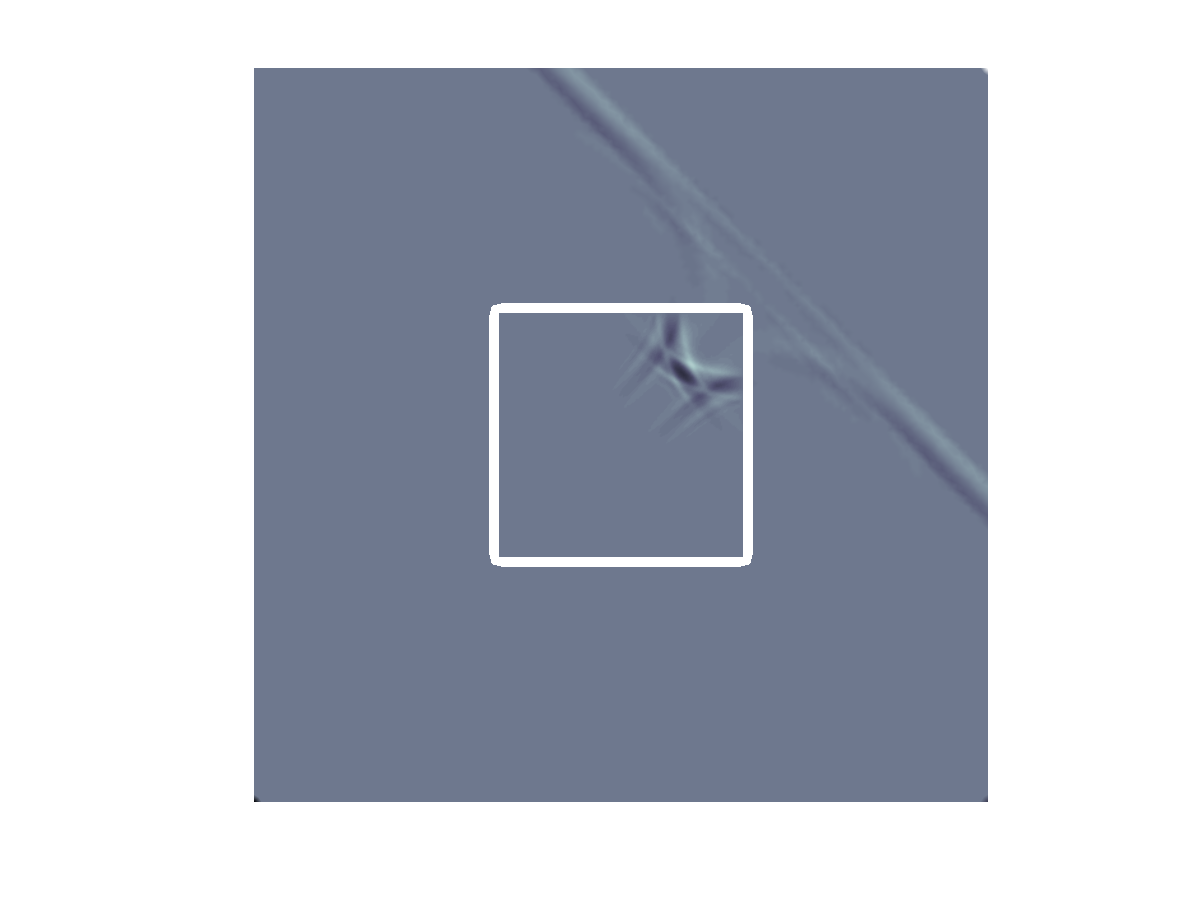}
\includegraphics[scale=.3]{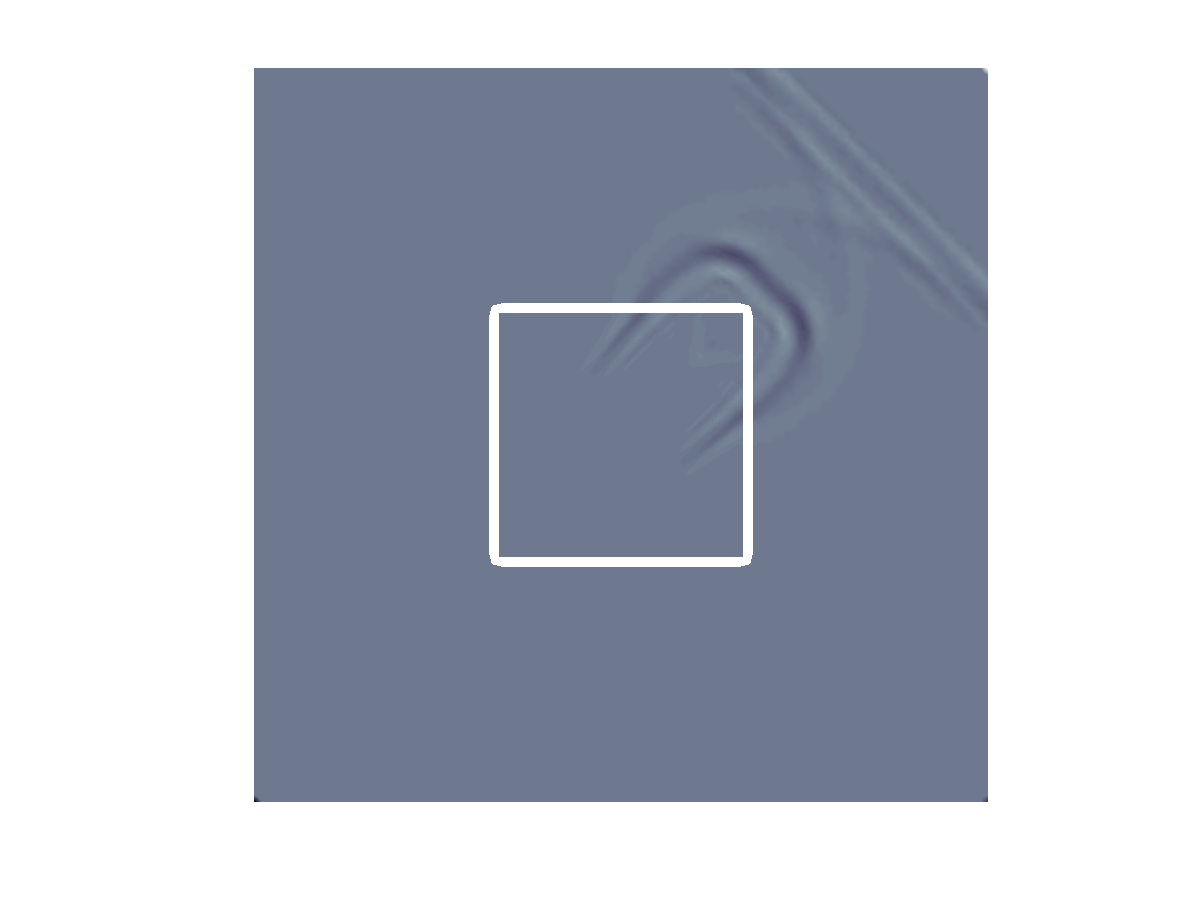}

\includegraphics[scale=.3]{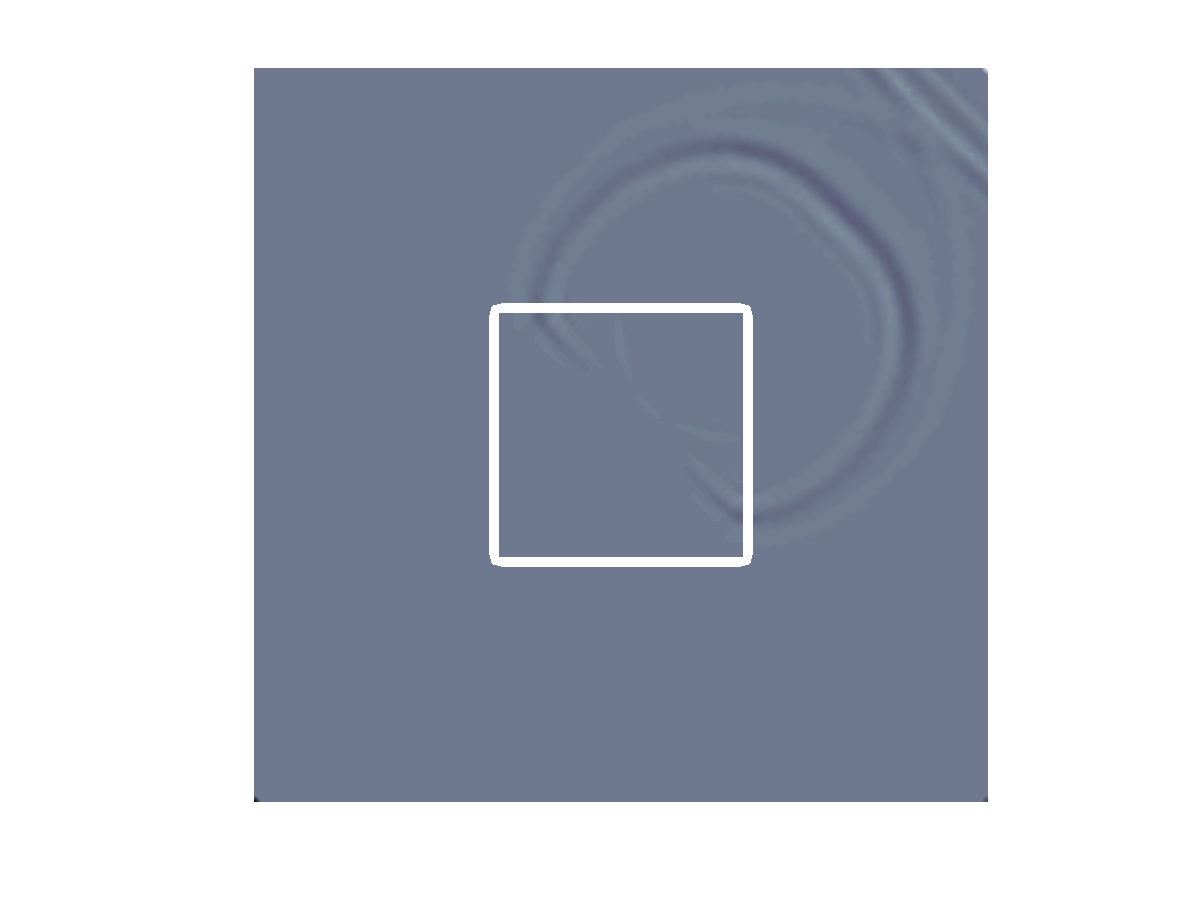}
\includegraphics[scale=.3]{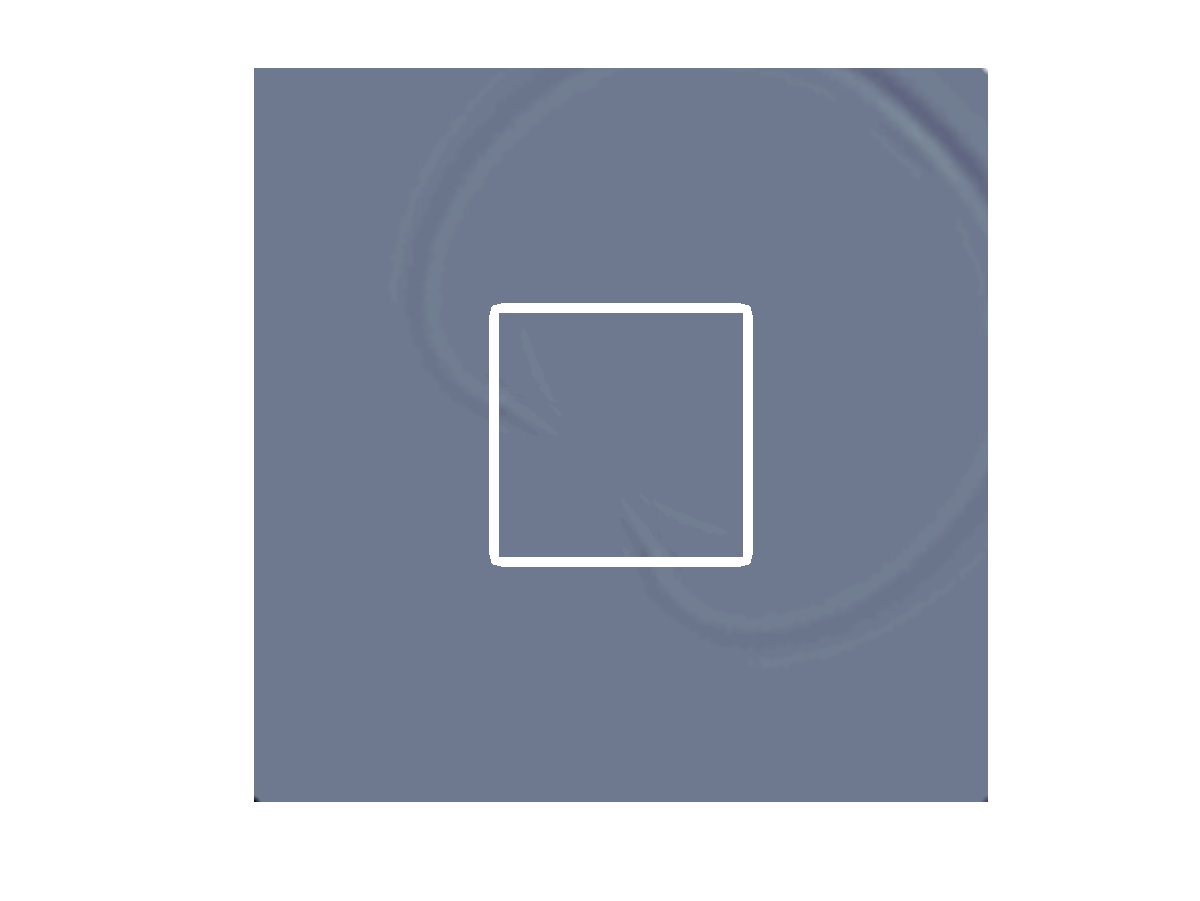}
\end{center}
\caption{Scattering of a plane wave by an obstacle with Gaussian lensing properties.}\label{fig:2}
\end{figure}

\paragraph{Simulation of scattering by multiple obstacles.} This experiment demonstrates the coupling scheme applied to multiple obstacles with different material properties.  An incident plane wave interacts with the four small boxes.  The top left and bottom right boxes have material properties described by the matrix $\kappa = \text{diag}(4, 1/4)$ while the top right and bottom left boxes have material matrix $\kappa = \text{diag}(2,1/2)$.  In all four obstacles $c\equiv 1$. Again we use $\mathbb{P}_3$ FEM and $\mathbb{P}_3\times\mathbb{P}_2$ BEM.  There are a total of 1792 finite elements and 192 boundary elements for the spatial discretization.  The time step is $k = 2 \times 10^{-2}$ and the simulation is run from $t=0$ to $t=4$. Figure \ref{fig:1} displays some different times of the experiment.

\begin{figure}[htb]
\begin{center}

\includegraphics[scale=.3]{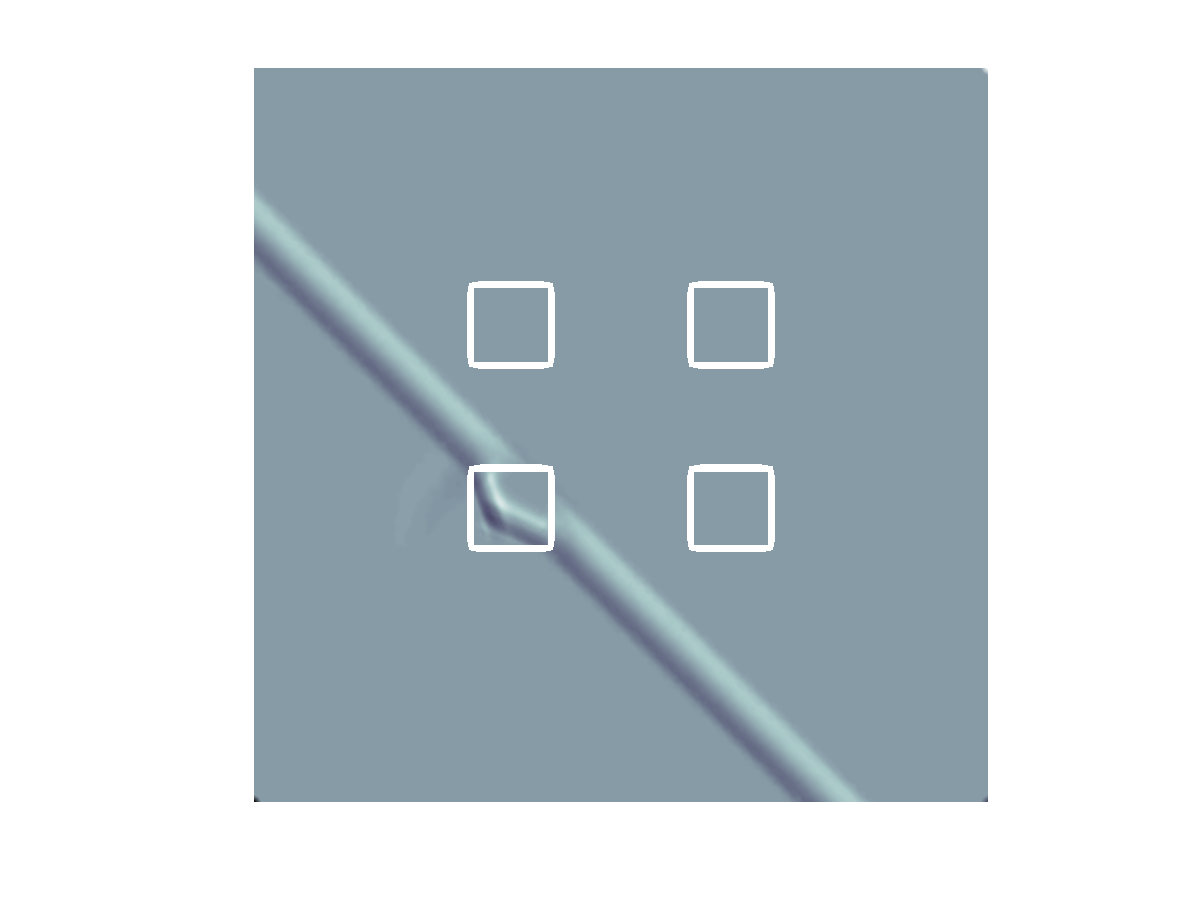}
\includegraphics[scale=.3]{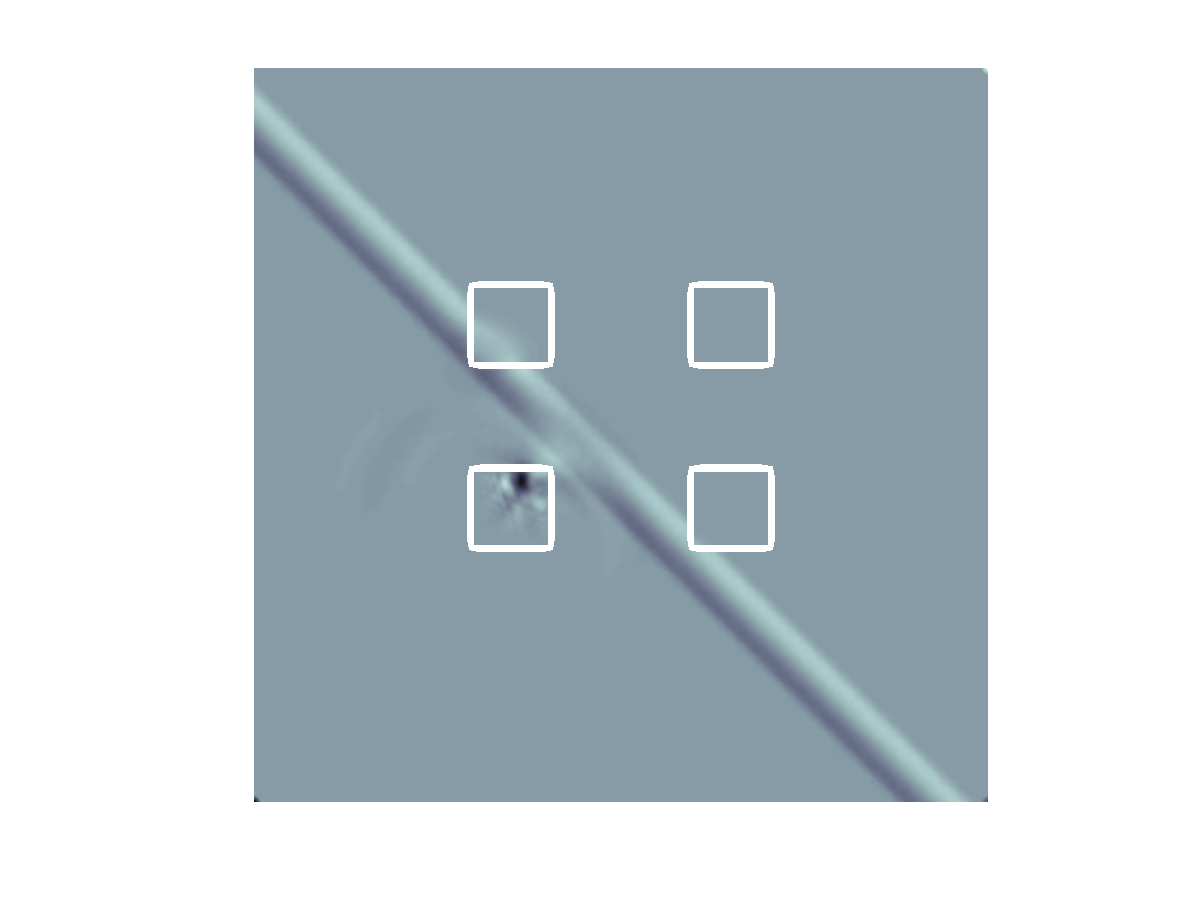}

\includegraphics[scale=.3]{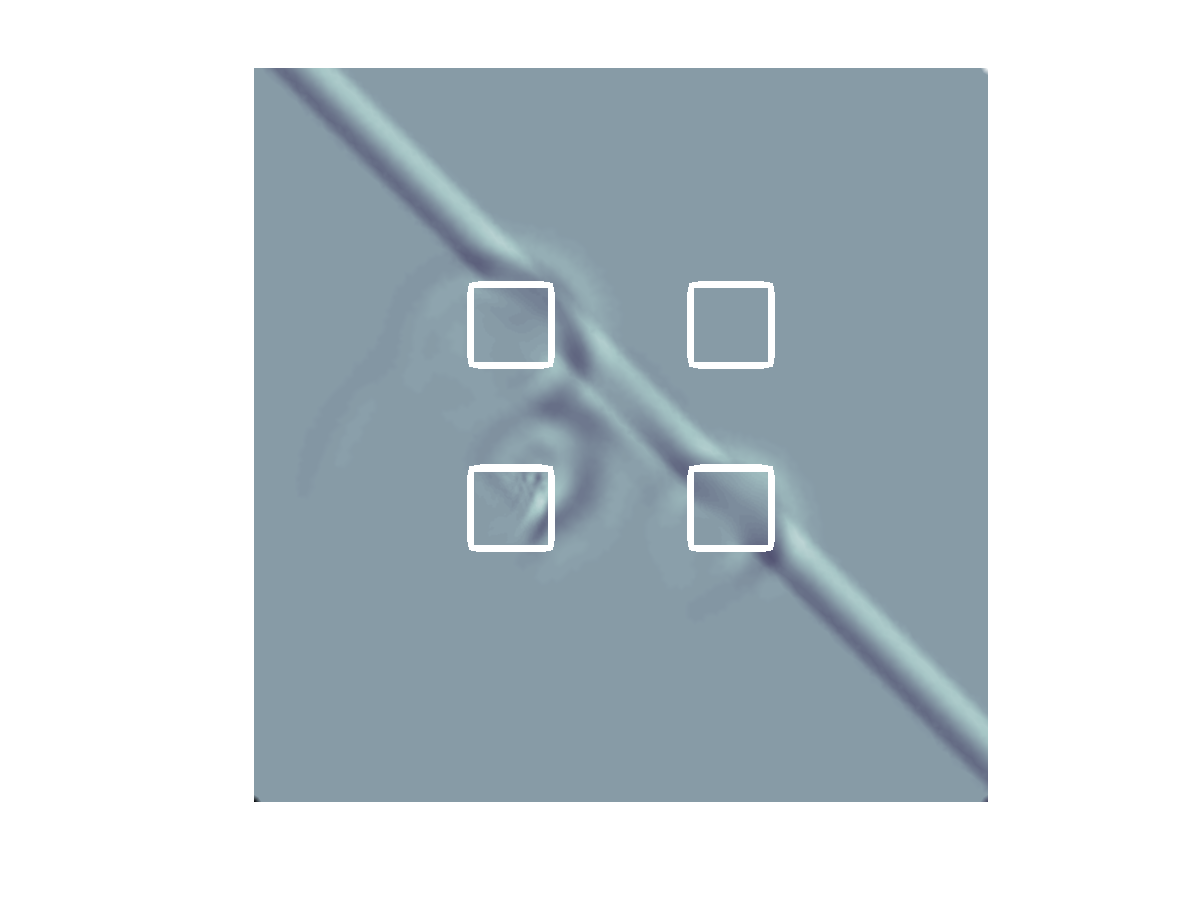}
\includegraphics[scale=.3]{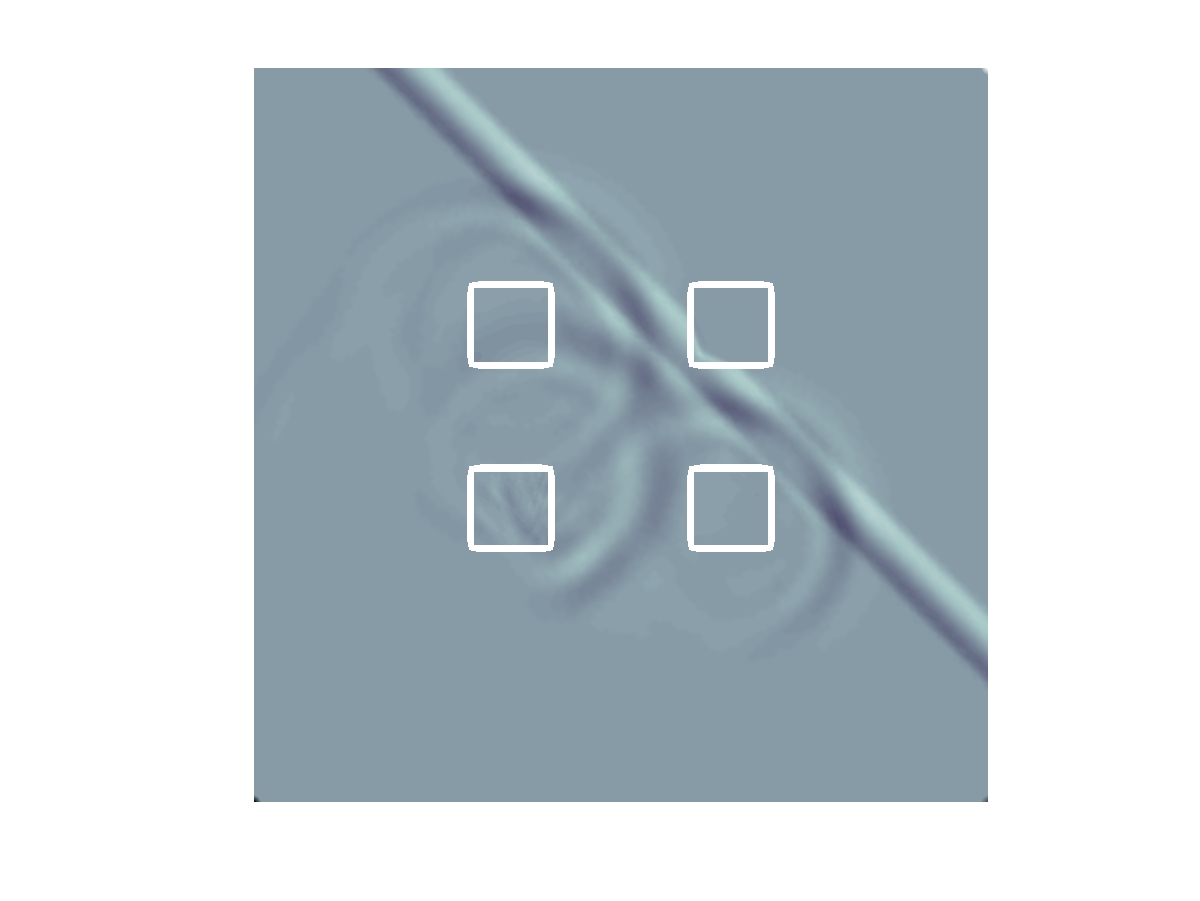}

\includegraphics[scale=.3]{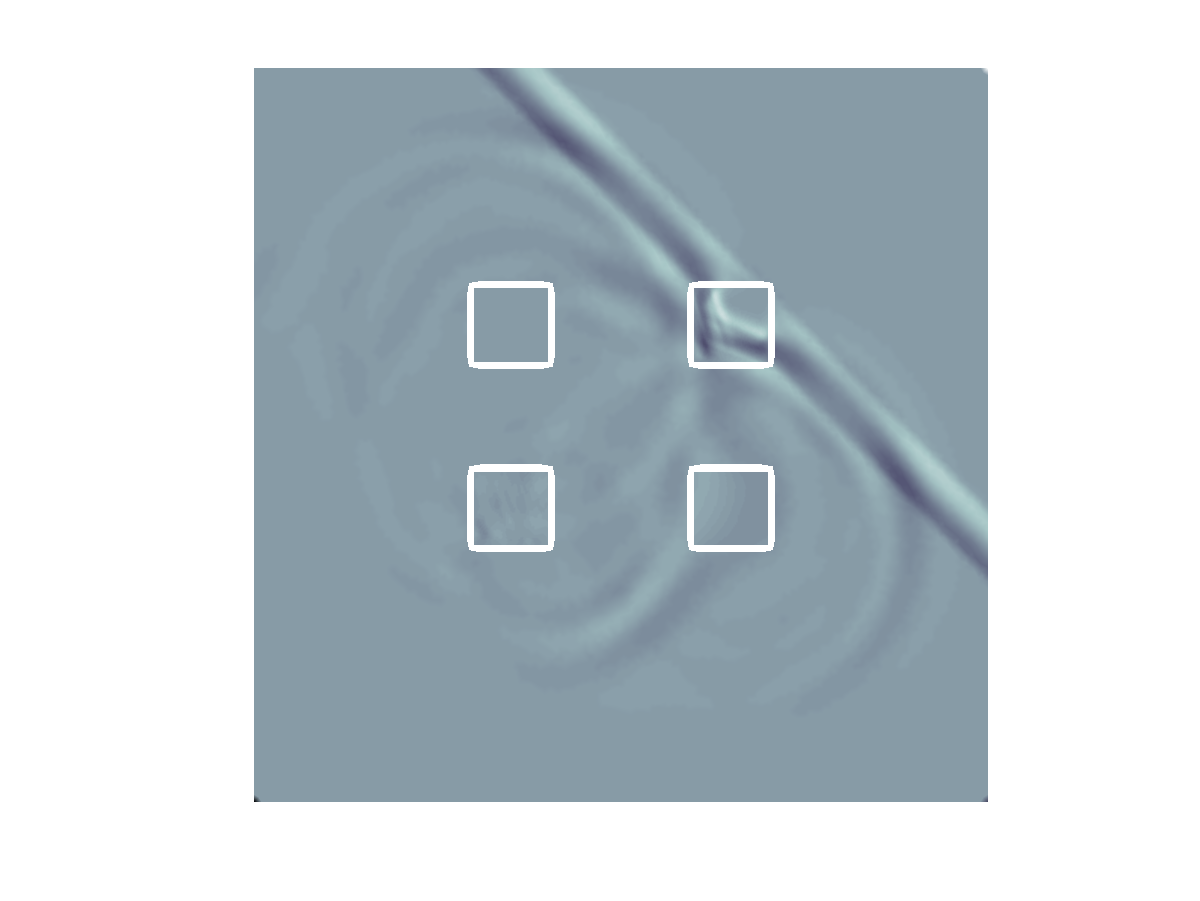}
\includegraphics[scale=.3]{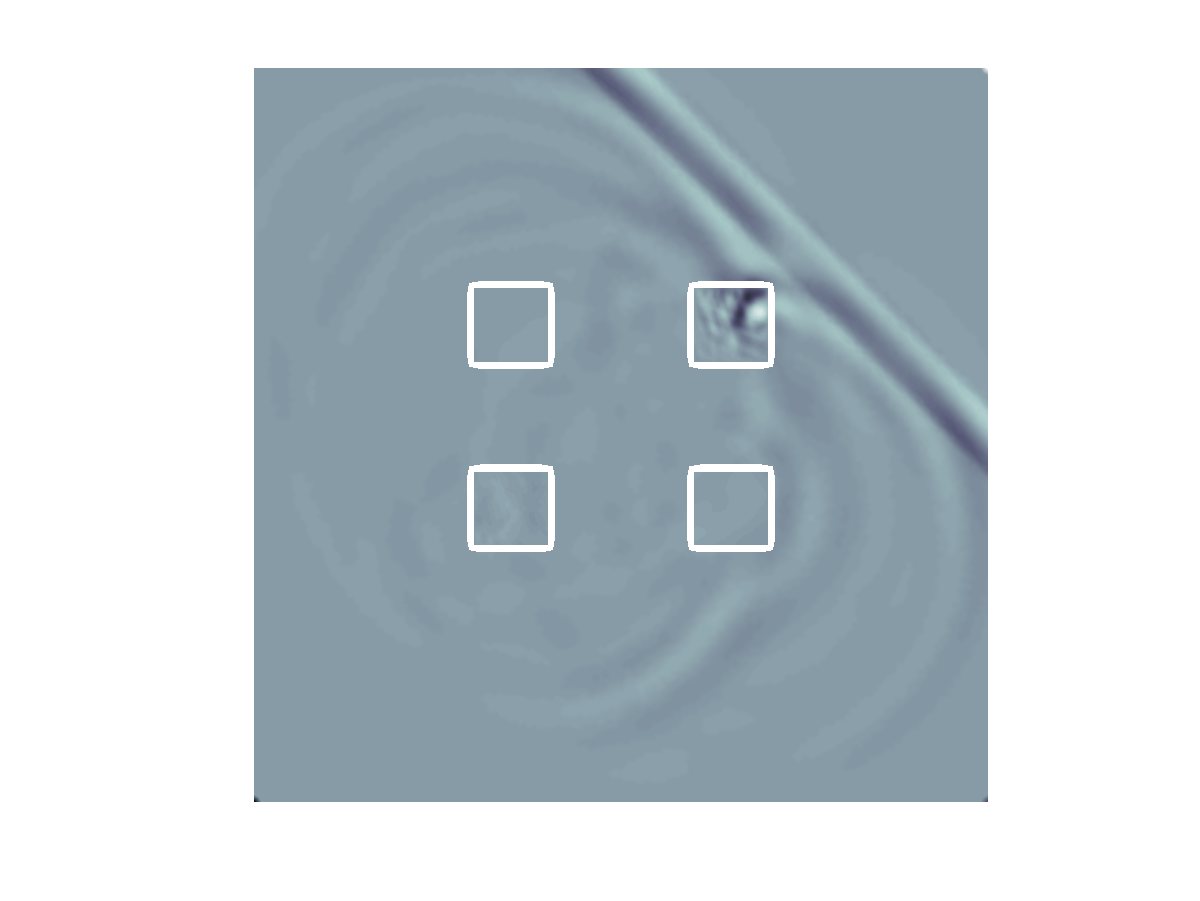}

\includegraphics[scale=.3]{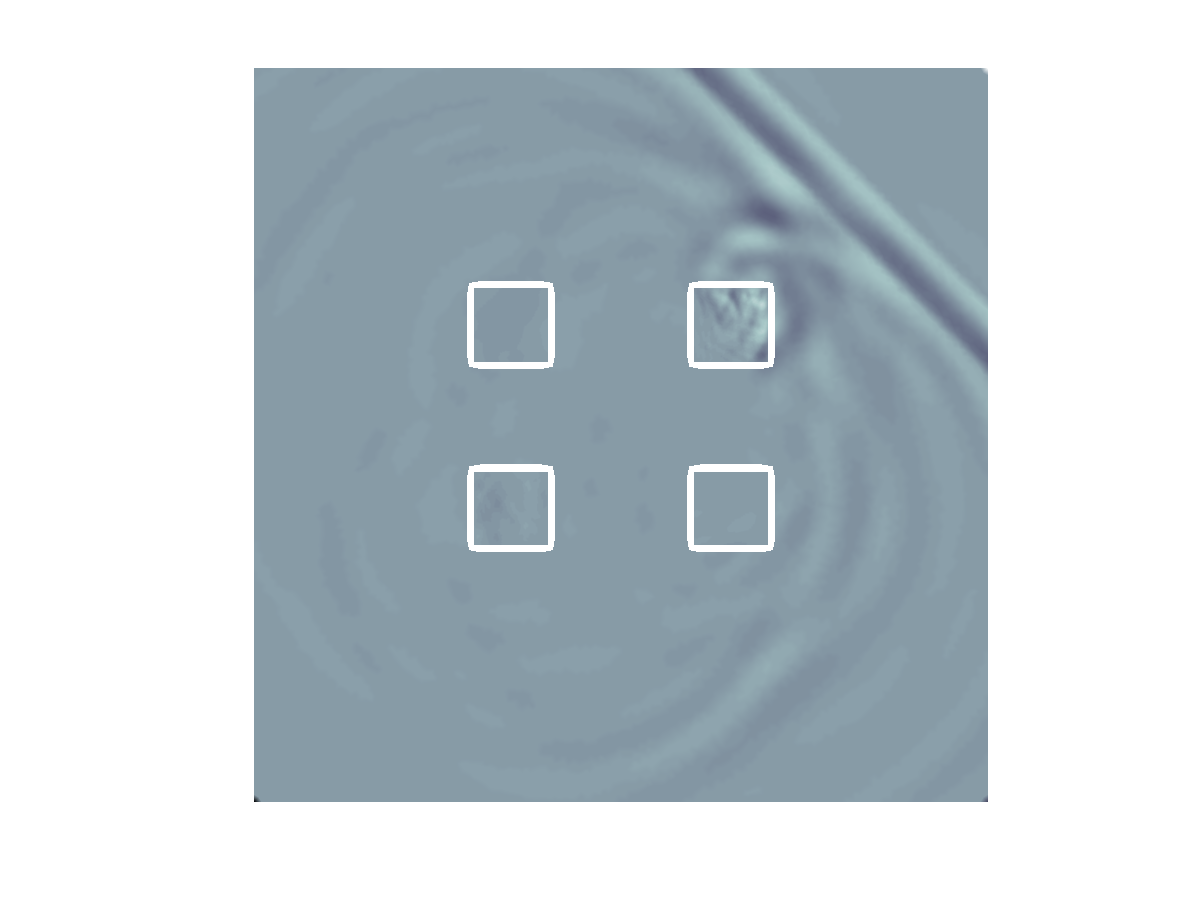}
\includegraphics[scale=.3]{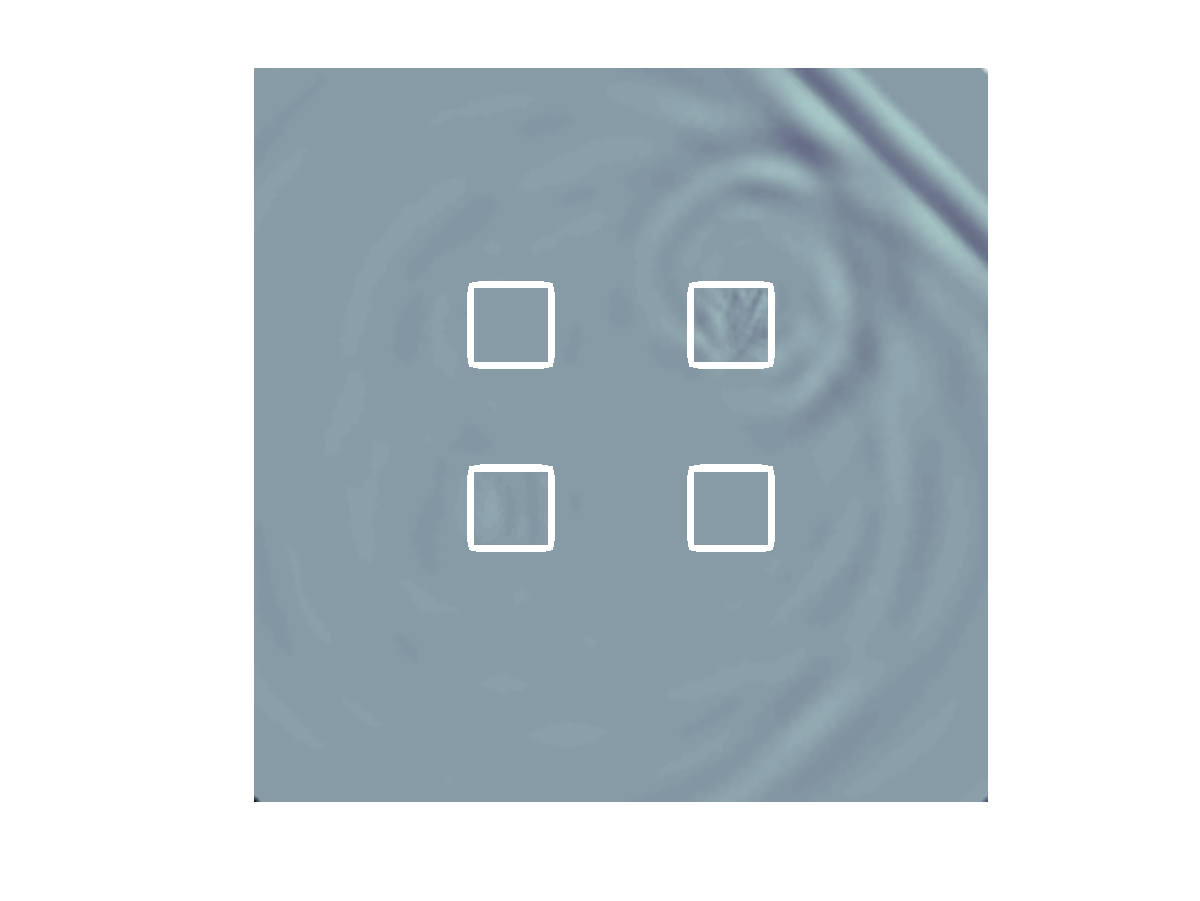}

\caption{Scattering of a plane wave by four homogeneous anisotropic obstacles with different material parameters.}\label{fig:1}
\end{center}
\end{figure}

\paragraph{Simulation of scattering by a trapping obstacle.}  Our last simulation takes place on a non-convex and trapping obstacle.   Again we use $\mathbb{P}_3$ FEM for the interior and $\mathbb{P}_3 \times \mathbb{P}_2$ BEM for the boundary densities.  The interior of the obstacle is partitioned into 11,968 finite elements, and the boundary is partitioned into 472 elements.  The time step size is $k \approx 6.7 \times 10^{-3},$ and we integrate from $t=0$ to $t=2.5$.  Wave propagation within the obstacle is determined by the parameters $c\equiv 1$ and the diagonal matrix $\kappa=\mathrm{diag}(0.25, 0.125)$.
The large difference in wave speeds between the interior and exterior produces a strong scattered wave and a highly focused and long-lived wave within the obstacle.  Some of the scattered wave is trapped within the void outside of the domain $\Omega_-$. The results are shown in Figure \ref{fig:3}.

\begin{figure}[htb]
\begin{center}
\includegraphics[scale=.3]{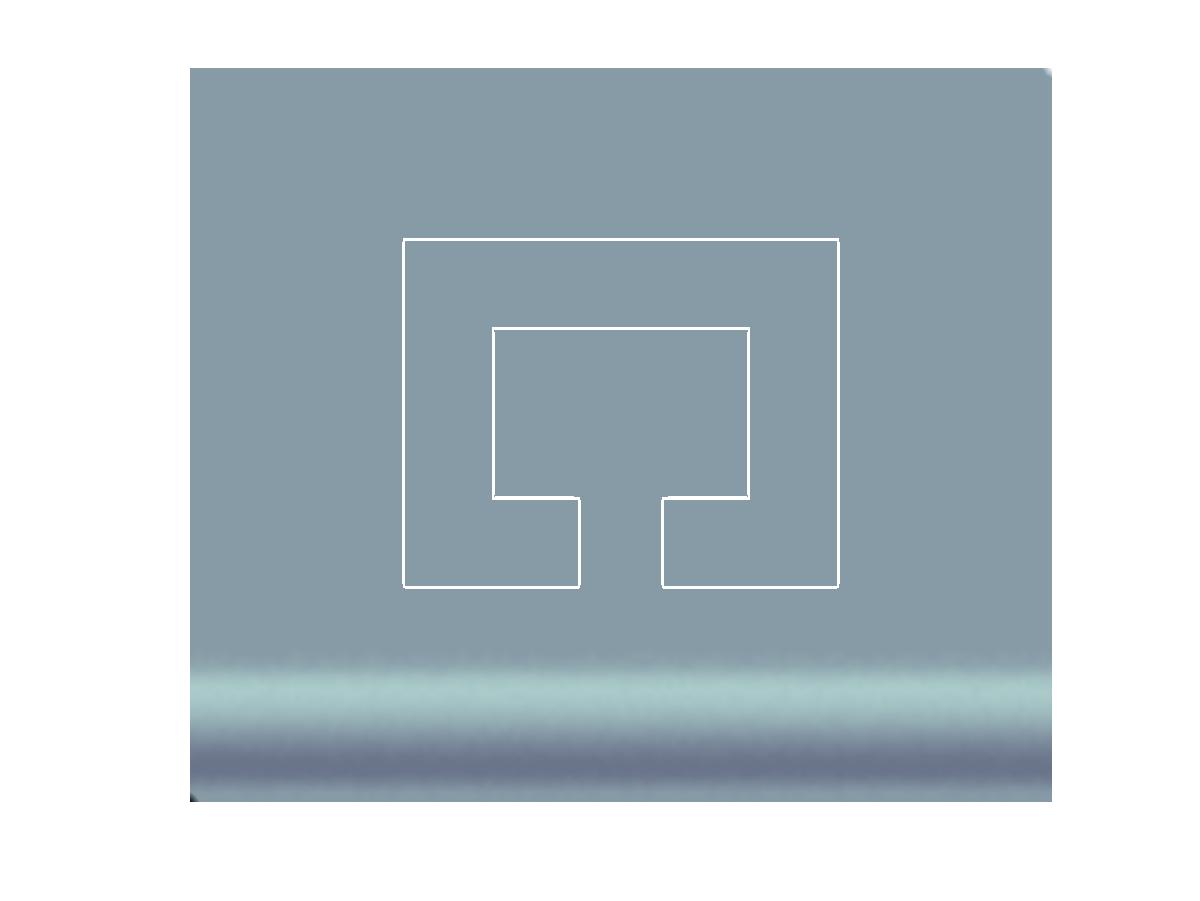}
\includegraphics[scale=.3]{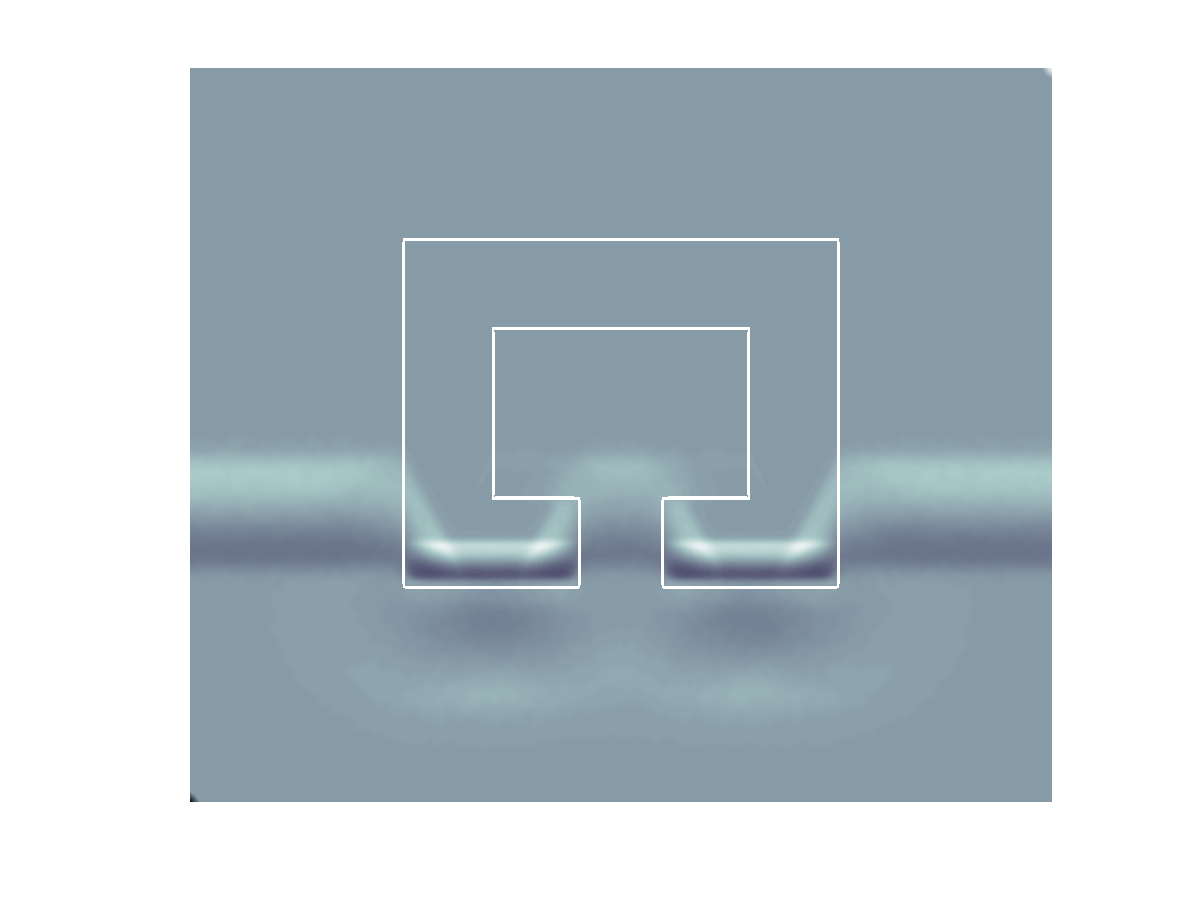}

\includegraphics[scale=.3]{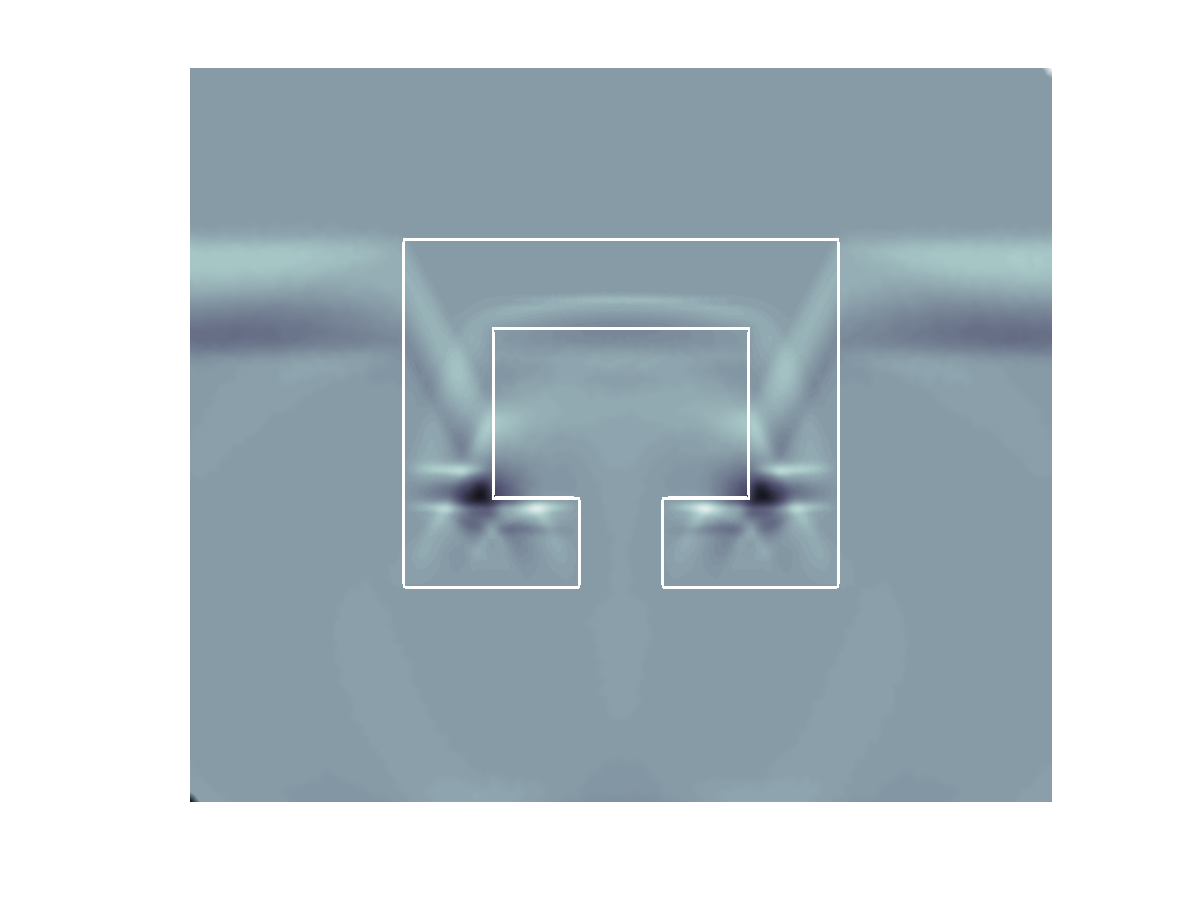}
\includegraphics[scale=.3]{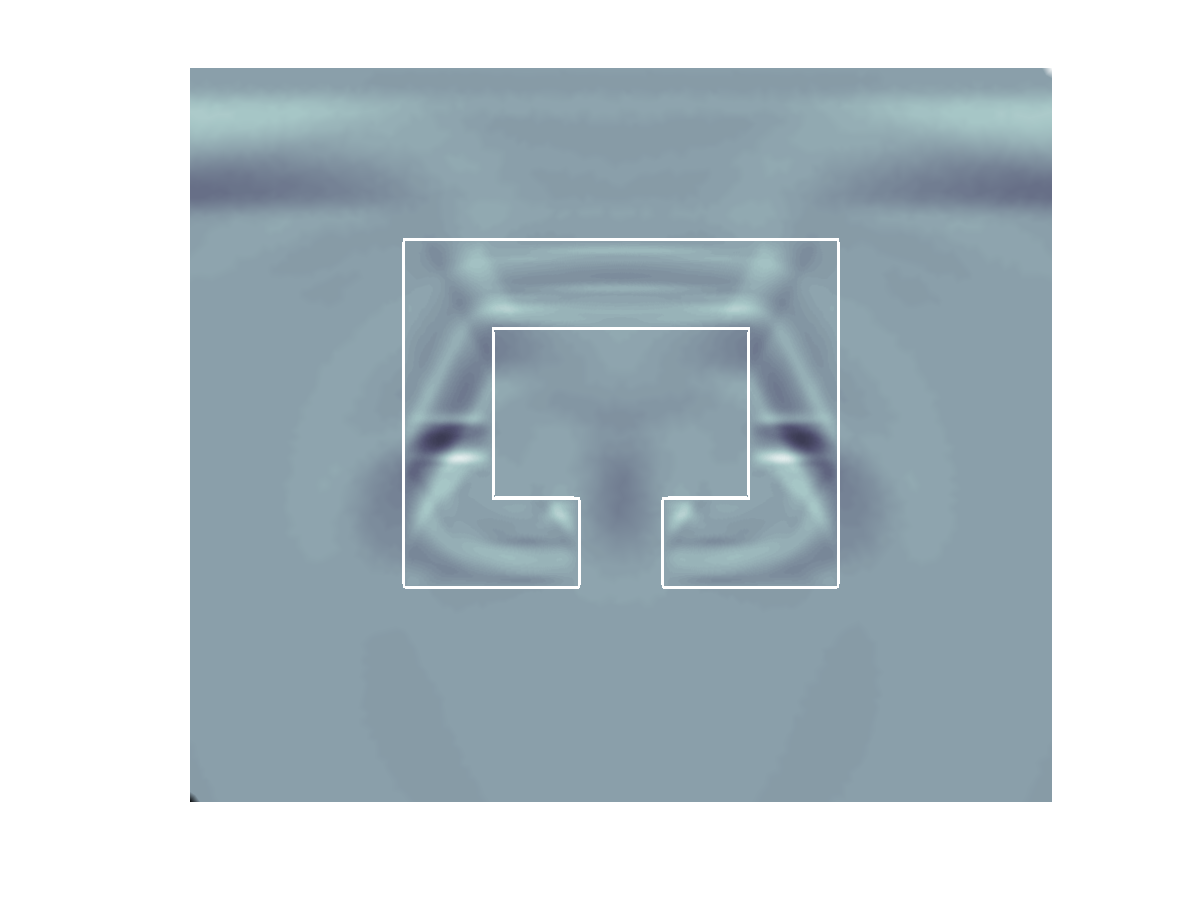}

\includegraphics[scale=.3]{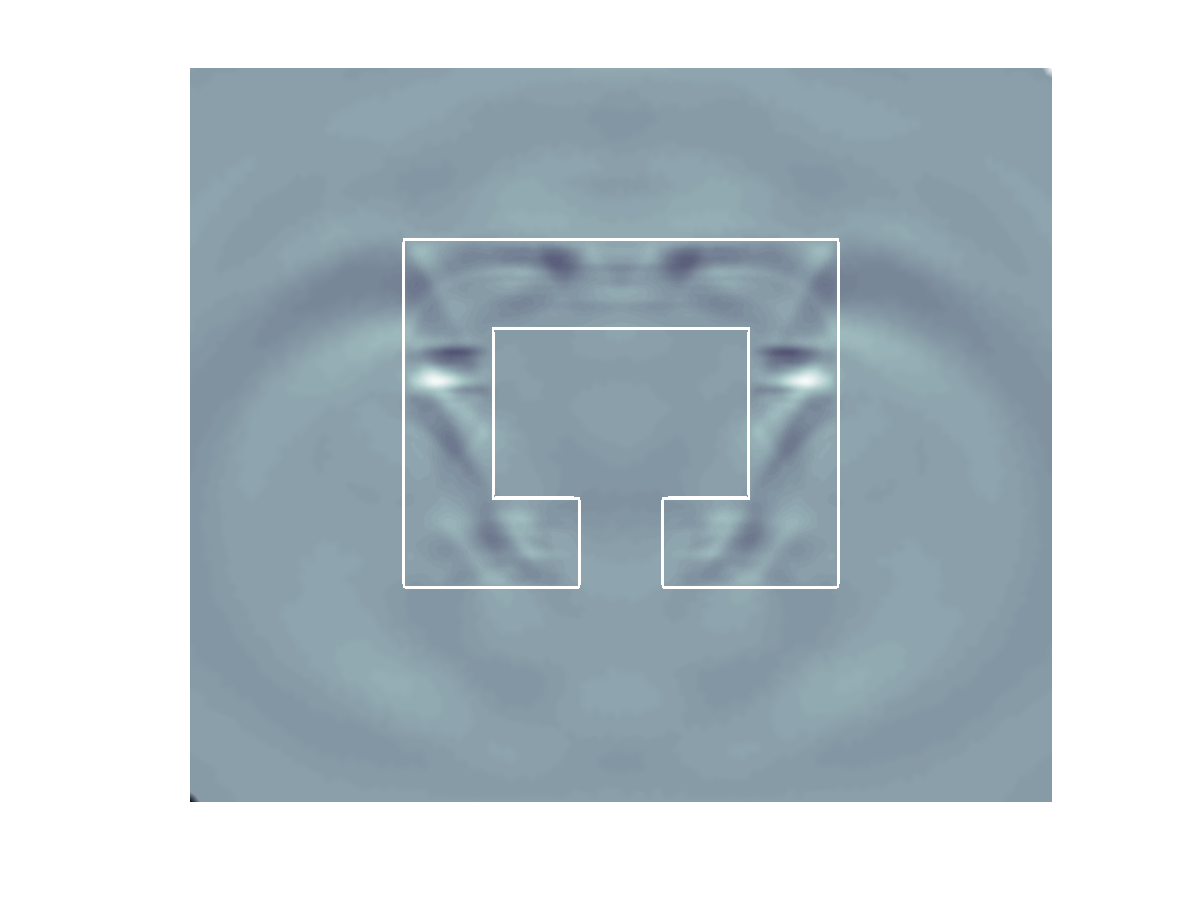}
\includegraphics[scale=.3]{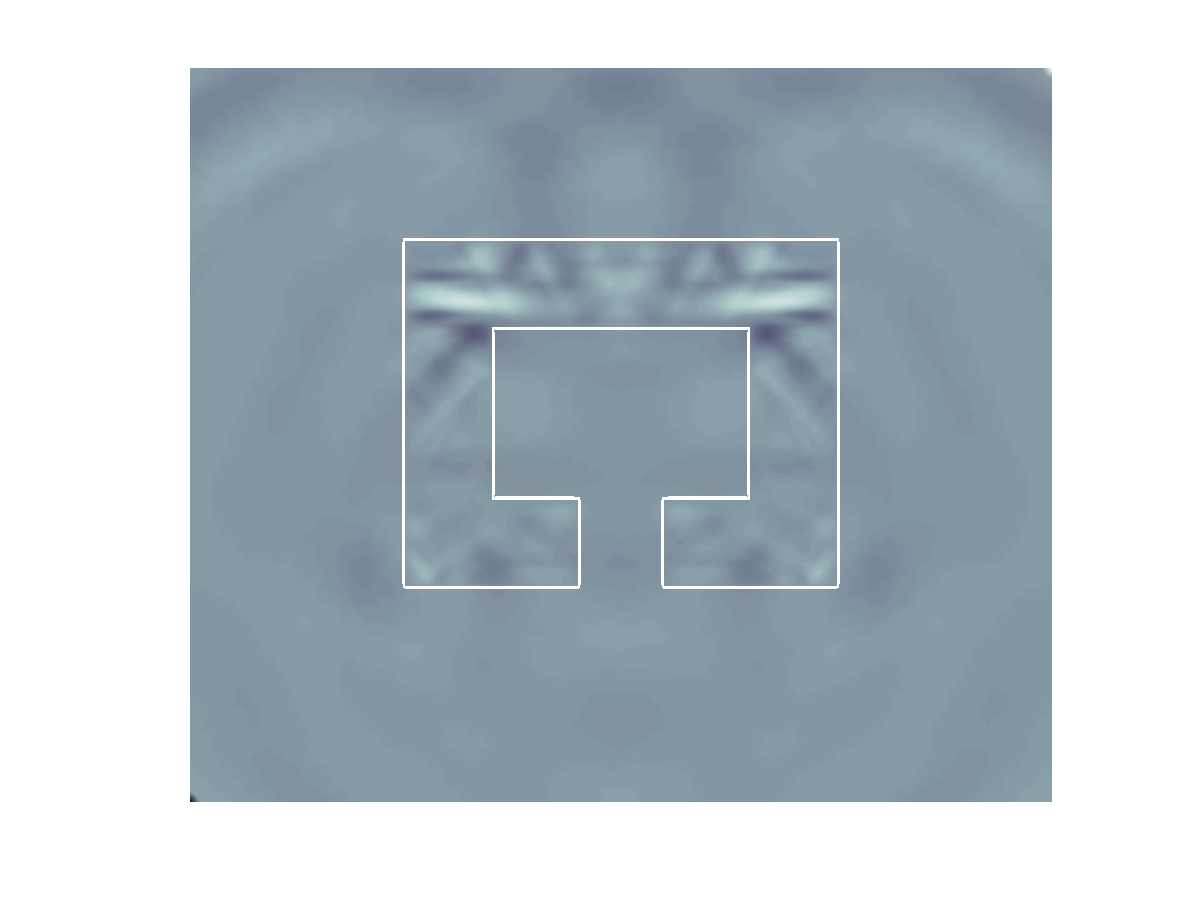}

\caption{Scattering and transmission of a wave by a non-convex domain.}\label{fig:3}
\end{center}
\end{figure}

\paragraph{Conclusions.} In this article we have presented and analyzed a fully discrete symmetric BEM-FEM scheme for transient acoustic scattering.  The analysis covers the stability of the spatial semi-discretization and convergence of a full trapezoidal rule based CQ discretization for the scattering problem.  Our theory predicts the full order of the Galerkin and CQ discretizations, which are confirmed by numerical experiments.  Similar estimates are easily derived for a Backward Euler CQ discretization, though only first order convergence will be possible.  We have explored computationally the use of RKCQ methods for time discretization, with which we are able to see convergence of order 3 when coupled with an appropriate spatial discretization.   A reduction to the boundary strategy allows for the application of parallel CQ, making (at least in two dimensions) the method faster than the associated marching-on-in-time scheme.

\bibliographystyle{abbrv}
\bibliography{biblio}

\end{document}